\documentclass[11pt,reqno,a4paper]{amsart}

\usepackage[utf8]{inputenc}
\usepackage[T1]{fontenc}
\usepackage{lmodern}
\usepackage{microtype}
\usepackage[margin=2.4cm]{geometry}
\usepackage{enumitem}
\usepackage{xcolor}

\usepackage{amsmath,amssymb,mathtools}
\usepackage{amsthm}
\usepackage{mathdots}
\usepackage{easybmat}

\usepackage{tikz}
\usetikzlibrary{decorations.pathreplacing}
\usepackage{dynkin-diagrams}
\usepackage{rotating}
\usepackage{wasysym}
\usepackage{subcaption}
\usepackage{multirow}
\usepackage{float}

\usepackage[usenames,dvipsnames,svgnames,table]{xcolor}
\definecolor{todocolor}{HTML}{D7E1E5}
\usepackage[textsize=tiny,
            backgroundcolor=todocolor,
            bordercolor=todocolor,
            linecolor=todocolor,
            textwidth=3.9cm]{todonotes}

\usepackage{xspace}
\usepackage{comment}
\usepackage{tabularx}
\usepackage{xltabular}
\usepackage{cite}
\usepackage{bibentry}

\usepackage{xr-hyper}
\usepackage[
pdftex,
bookmarks=true,
colorlinks=true,
debug=true,
pdfnewwindow=true]{hyperref}
\hypersetup{bookmarksdepth = 3}

\makeatletter
\let\@wraptoccontribs\wraptoccontribs
\makeatother

\theoremstyle{plain}
\newtheorem{Thm}[equation]{Theorem}

\newtheorem{Prop}[equation]{Proposition}

\newtheorem{Def}[equation]{Definition}

\newtheorem*{Thm-non}{Theorem}

\theoremstyle{remark}

\newtheorem{Rem}[equation]{Remark}

\numberwithin{equation}{section}

\makeatletter
\g@addto@macro\bfseries{\boldmath}
\makeatother

\makeatletter
\newcases{rrcases}{\quad}{%
  \hfil$\m@th\displaystyle{##}$}{$\m@th\displaystyle{##}$\hfil}{\lbrace}{.}
\makeatother

\newcommand{\ol}{\overline}
\newcommand{\bbar}[1]{\bar{\mathbf{#1}}}

\newcommand{\iso}{\,\vphantom{j^{X^2}}\smash{\overset{\sim}{\vphantom{\rule{0pt}{0.20em}}\smash{\longrightarrow}}}\,}

\newcommand{\wt}{\widetilde}
\newcommand{\sfrac}[2]{{\textstyle\frac{#1}{#2}}}

\newcommand{\BZ}{\mathbb{Z}}
\newcommand{\BC}{\mathbb{C}}

\newcommand{\BK}{\mathbb{K}}

\newcommand{\fg}{\mathfrak{g}}
\newcommand{\fh}{\mathfrak{h}}

\newcommand{\gl}{\mathfrak{gl}}

\newcommand{\fosp}{\mathfrak{osp}}

\newcommand{\fR}{\mathfrak{R}}

\newcommand{\CA}{\mathcal{A}}

\newcommand{\CO}{\mathcal{O}}
\newcommand{\CR}{\mathcal{R}}

\newcommand{\CT}{\mathcal{T}}

\newcommand{\CY}{\mathcal{Y}}

\newcommand{\sfh}{{\mathsf{h}}}

\newcommand{\sfs}{{\mathsf{s}}}
\newcommand{\sft}{{\mathsf{t}}}

\newcommand{\sT}{\mathsf{T}}

\newcommand{\DrJ}{\mathrm{DJ}}

\newcommand{\RLL}{\textrm{RLL}}

\newcommand{\uqV}{U_{q}(\fosp(V))}
\newcommand{\UdqV}{U_{q}(\widehat{\fosp}(V))}

\newcommand{\aff}{\mathrm{aff}}

\newcommand{\End}{\mathrm{End}}

\newcommand{\opp}{\mathrm{op}}

\newcommand{\ID}{\mathrm{I}}
\newcommand{\rmu}{\mathrm{u}}
\newcommand{\rms}{\mathrm{s}}

\DeclareMathOperator{\Id}{Id}
\DeclareMathOperator{\sVect}{sVect}

\DeclareMathOperator{\Span}{Span}
\DeclareMathOperator{\st}{st}

\newmuskip\pFqmuskip
\newcommand*\pFq[6][8]{%
    \begingroup
    \pFqmuskip=#1mu\relax
    \mathcode`\,=\string"8000
    \begingroup\lccode`\~=`\,
    \lowercase{\endgroup\let~}\pFqcomma
    {}_{#2}F_{#3}{\left(\genfrac..{0pt}{}{#4}{#5};#6\right)}%
    \endgroup}
\newcommand{\pFqcomma}{\mskip\pFqmuskip}

\def\be{\begin{eqnarray}}
\def\ee{\end{eqnarray}}

\begin{document}

\title[Rank reduction in orthosymplectic types]
{\large{\textbf{Rank reduction in orthosymplectic types}}}

\author{Kyungtak Hong}
\address{K.H.: Purdue University, Department of Mathematics, West Lafayette, IN 47907, USA}
\email{hong420@purdue.edu}

\author{Alexander Tsymbaliuk}
\address{A.T.: Purdue University, Department of Mathematics, West Lafayette, IN 47907, USA}
\email{sashikts@gmail.com}

\begin{abstract}
This note establishes a rank reduction for RLL-realization of quantum affine supergroups of orthosymplectic type.
In the rational limit, this provides a more conceptual proof of the analogous result for orthosymplectic superYangians from~\cite{ft}.
We also determine the exact scalar prefactor appearing in the evaluation of the universal $R$-matrix, refining the main result of~\cite{ht1}.
\end{abstract}

\maketitle


\smallskip

\section{Introduction}
\label{sec:intro}


\subsection{Summary}
\label{ssec:summary}
\

For simple Lie algebras $\fg$ of classical type, the quantum groups $U^{\RLL}_q(\fg)$ first implicitly appeared in the work
of Faddeev's school on the \emph{quantum inverse scattering method}, see e.g.~\cite{frt}. In this \emph{$\RLL$-realization},
the algebra generators are encoded by two square matrices $L^\pm$ subject to the famous \emph{$\RLL$-relations}
$RL^\pm_1L^\pm_2=L^\pm_2L^\pm_1R$, $RL^+_1L^-_2=L^-_2L^+_1R$ (along with central reduction), where $R$ is a solution of the
\emph{quantum Yang--Baxter equation} $R_{12}R_{13}R_{23} = R_{23}R_{13}R_{12}$. The Hopf algebra structure on $U^{\RLL}_q(\fg)$
is naturally given via $\Delta(L^\pm)=L^\pm\otimes L^\pm$, $S(L^\pm)=(L^\pm)^{-1}$, $\epsilon(L^\pm)=\ID$.

This was further generalized to the affine setup in~\cite{rs}, giving rise to the algebra $U^{\RLL}_q(\widehat{\fg})$ whose
generators are encoded by two formal power series matrices $L^\pm(u)$ subject to similar \emph{$\RLL$-relations}
\begin{align*}
  R_{12}(u/v)L^{\pm}_{1}(u)L^{\pm}_{2}(v) &= L^{\pm}_{2}(v)L^{\pm}_{1}(u) R_{12}(u/v) \,, \\
  R_{12}(q^{c}u/v)L^{+}_{1}(u)L^{-}_{2}(v) &= L^{-}_{2}(v)L^{+}_{1}(u) R_{12}(q^{-c}u/v) \,,
\end{align*}
where $q^{\pm c}$ are central and $R(z)$ is a solution of the parameter-dependent \emph{quantum Yang--Baxter equation}
\begin{equation*}
  R_{12}(u/v) R_{13}(u/w) R_{23}(v/w) = R_{23}(v/w) R_{13}(u/w) R_{12}(u/v) \,.
\end{equation*}

The uniform definition of quantum groups $U^{\DrJ}_q(\fg)$ and $U^{\DrJ}_q(\widehat{\fg})$ was provided by Drinfeld and Jimbo,
called the \emph{Drinfeld--Jimbo realization}. In this presentation, the generators $e_i,f_i,k^{\pm 1}_i$ are labeled by simple
roots $\alpha_i$ of $\fg$ or $\widehat{\fg}$, while the Hopf algebra structure is given formally by the assignment on generators.
When $\fg$ is of finite type and $q$ is not a root of unity, the representation theory of $U^{\DrJ}_q(\fg)$ is completely parallel
to that of $\fg$. On the other hand, to develop the representation theory of $U^{\DrJ}_q(\widehat{\fg})$ in affine types one needs
an alternative \emph{new Drinfeld} (aka \emph{loop}) realization
$$
  U^{\mathrm{Dr}}_q(\widehat{\fg}) \simeq U^{\DrJ}_q(\widehat{\fg}) \,.
$$

To relate the above algebras $U^{\RLL}_q(\widehat{\fg})$ and $U^{\mathrm{Dr}}_q(\widehat{\fg})$, one considers
the corresponding affine $R$-matrix $R(z)$ arising through the evaluation of the universal $R$-matrix $\fR_{\aff}$
(corresponding to the canonical tensor with respect to the Drinfeld double realization of $U^{\DrJ}_q(\widehat{\fg})$)
on the tensor product of two evaluation modules $V(u)\otimes V(v)$, where $z=u/v$. In type $A$, the corresponding isomorphism
\begin{equation*}
  U^{\mathrm{Dr}}_q(\widehat{\gl}_n) \iso U^{\RLL}_q(\widehat{\gl}_n)
\end{equation*}
was first constructed in~\cite{df} through the Gauss decomposition of the generating matrices $L^\pm(u)$.
The explicit generalization to other classical $BCD$ types appeared much more recently in~\cite{jlm2, jlm3}.
In the above setup, we shall denote $U^{\RLL}_q(\widehat{\fg})$ rather by $U(R(z))$.

The theory of quantum supergroups associated with Lie superalgebras is still far from a full development, though their
Drinfeld--Jimbo realizations (for both finite and affine types) appeared more than two decades ago in~\cite{y}.
One of the major deficiencies lies in the absence of a uniform (new) Drinfeld realization of such algebras akin to
$U^{\mathrm{Dr}}_q(\widehat{\fg})$, except for super $A$ and $D(2,1;\alpha)$ types. Another new feature of the super setup is the
presence of additional \emph{higher-order Serre relations}, which crucially depend on the choice of a Dynkin diagram
of the underlying Lie superalgebra. For $\fg=\fosp(V)$, the so-called \emph{orthosymplectic types} (aka super $BCD$ types),
a conjectured description of the algebra $U^{\mathrm{Dr}}_q(\widehat{\fg})$ appeared in~\cite{bfk}, while its
shuffle realization was established recently in~\cite{nt}.

This note is devoted to establishing the key \emph{rank reduction} structure required to carry out the super-type generalization
of~\cite{jlm2,jlm3}, establishing the new Drinfeld realization of quantum affine orthosymplectic supergroups. Indeed, in analogy
with the superYangian counterpart (aka rational version) carried out in~\cite{ft}, performing the Gauss decomposition on the
generating matrices $L^\pm(u)$, the quadratic relations in the sought-after Drinfeld presentation are derived from the super
$A$ type analogue as well as rank $\leq 2$ cases (the latter ones are handled by brute force).

As noticed in~\cite{m}, the classical argument of~\cite{jlm1,jlm2,jlm3} does not always go through due to certain resonance
conditions of the corresponding $R$-matrix. In~\cite{m} (see also~\cite{ft}) this was bypassed via a tedious linear algebraic
argument. The main objective of this note is to show that keeping track of residues at possible singularities of the $R$-matrix still
allows one to apply the standard argument, see Theorem~\ref{thm:rank-reduction}. The latter is crucially based on a $10$-term matrix
equality~\eqref{eq:fused-RLL-check}, encoding an important \emph{fused structure}. While the same argument can be applied verbatim
to the rational setup, we rather re-derive the corresponding results of~\cite{ft,m} through the trigonometric-to-rational~limit.

We also refine the main result of~\cite{ht1} by explicitly computing the scalar prefactor entering the evaluation $R(z)$ of the
universal orthosymplectic affine $R$-matrix $\fR_{\aff}$. In the non-super case, such factors were derived in~\cite{jlm2,jlm3}
through the fundamental \emph{crossing symmetry}, the detailed proof of which was presented in~\cite[Appendix C]{mt}. In the case
of super $B$ type with the standard Dynkin diagram, this prefactor appeared earlier in~\cite{wlz}.


\subsection{Outline}
\label{ssec:outline}
\

The structure of the present paper is the following:
\begin{itemize}[leftmargin=*]

\item[$\bullet$]
In Section~\ref{sec:preliminaries}, we recall the basic conventions on superalgebras and the normalized
affine orthosymplectic $R$-matrix of~\cite{ht1}.

\item[$\bullet$]
In Section~\ref{sec:RLL-algebra}, we recall the RLL realization $U(\ol{R}(z))$ of the (extended) quantum affine
orthosymplectic supergroup, see Definition~\ref{def:U(R(z))}. We also relate it to the twisted version $\wt{U}(\ol{R}(z))$
from Definition~\ref{def:U-tilde} through the $2$-cocycle twist in Proposition~\ref{prop:affine-twisted-rtt}.

\item[$\bullet$]
In Section~\ref{sec:rank-reduction}, we state the main result of this note, see Theorem~\ref{thm:rank-reduction}.
We also provide a graphical proof of the $10$-term matrix equality~\eqref{eq:fused-RLL-check}, the key ingredient in the proof
of the above theorem, and interpret~\eqref{eq:fused-RLL-check} via the \emph{fused structure} in Remark~\ref{rem:fusion-interpretation}.

\item[$\bullet$]
In Section~\ref{sec:proof-rank-reduction}, we present the proof of Theorem~\ref{thm:rank-reduction}.
In Theorem~\ref{thm:Y-rank-reduction}, we derive a similar result for orthosymplectic superYangians
through the standard trigonometric-to-rational limit.

\item[$\bullet$]
In Appendix~\ref{sec:prefactor}, we refine the main result of~\cite{ht1} by computing explicitly the scalar prefactor
entering the evaluation of the universal affine $R$-matrix, see Proposition~\ref{prop:R-univ-explicit}.

\end{itemize}


\subsection{Acknowledgement}\label{ssec:acknowl}
\

A.T.\ gratefully acknowledges the support by NSF Grant DMS-2302661 and Simons Fellowship MPS-SFM-00020373.


\smallskip

\section{Preliminaries and the affine orthosymplectic $R$-matrix}\label{sec:preliminaries}


\subsection{Notations and setups}\label{ssec:notations-setups}
\

Our notations and conventions regarding superspaces and superalgebras closely follow~\cite[\S2.1]{ht2}.
The category $\sVect_{\BK}$ of superspaces $V = V_{\bbar{0}} \oplus V_{\bbar{1}}$ over a field $\BK$
(with degree-preserving linear maps as morphisms) is a symmetric tensor category with the braiding given
by the graded permutation:
\begin{equation*}
  \tau_{VW} \colon V \otimes W \to W \otimes V\,, \qquad  v \otimes w \mapsto (-1)^{|v||w|} w \otimes v
\end{equation*}
for $\BZ_{2}$-homogeneous elements $v \in V, w \in W$ of respective $\BZ_{2}$-degrees $|v|, |w| \in \BZ_{2}$,
where we set $(-1)^{\bbar{0}} = 1$ and $(-1)^{\bbar{1}} = -1$.

For any superalgebras $A, B$ in $\sVect_{\BK}$, their
tensor product $A \otimes B$ is a superalgebra with multiplication given by
$$
  (a \otimes b)(a' \otimes b') = (-1)^{|b||a'|} (aa') \otimes (bb')
$$
for $\BZ_{2}$-homogeneous elements $a, a' \in A$ and $b, b' \in B$. Analogously, the action of $A \otimes B$ on
the tensor product $V \otimes W$ of representations $V$ of $A$ and $W$ of $B$ (in particular, the action of
$\End_{\BK}(V) \otimes \End_{\BK}(W)$ on $V \otimes W$) follows the same sign convention:
\begin{equation*}
  (a \otimes b)(v \otimes w) = (-1)^{|b||v|} (av) \otimes (bw)
\end{equation*}
for $\BZ_{2}$-homogeneous elements $a \in A$, $b \in B$, $v \in V$, and $w \in W$.

For any superalgebra $A$ in $\sVect_{\BK}$, its opposite superalgebra $A^{\opp}$ is the superalgebra with the same underlying
superspace as $A$ and multiplication given by $a \cdot_{\opp} a' \coloneqq (-1)^{|a||a'|} a'a$ for any $\BZ_{2}$-homogeneous
elements $a, a' \in A$. A superalgebra anti-homomorphism between superalgebras $A$ and $B$ is a superalgebra homomorphism
$A \to B^{\opp}$, or equivalently $A^{\opp} \to B$.

\medskip
Henceforth, we fix a superspace $V$ over $\BC$ of dimension $(m,n)$ with $n$ even, and set $N \coloneqq m+n$ and
$s \coloneqq \lfloor \frac{N}{2} \rfloor$. We assume $N \geq 3$ and fix an ordered $\BZ_{2}$-homogeneous basis
$\{v_{1}, \ldots, v_{N}\}$ of $V$. For each index $i \in \mathbb{I} \coloneqq \{1, \ldots, N\}$, we define its
\emph{parity} by $\ol{i} \coloneqq |v_{i}| \in \BZ_{2}$. Consider the involution on the index set $\mathbb{I}$ defined by
\begin{equation*}
  i \mapsto i' \coloneqq N+1-i \,.
\end{equation*}
We assume the symmetry $\ol{i} = \ol{i'}$ on the index set $\mathbb{I}$ (in particular, $\ol{s+1} = |v_{s+1}| = \bbar{0}$
when $N$ is odd), so that the grading of the ordered basis is recorded by the \emph{parity sequence}
$$
  \gamma_{V} \coloneqq (\ol{1}, \ldots, \ol{s}) \in \{\bbar{0}, \bbar{1}\}^{s} \,.
$$
This choice of basis identifies $\End_{\BC}(V)$ with the superalgebra of $N \times N$ matrices spanned by the elementary
matrices $\{E_{ij}\}_{i,j=1}^{N}$ of $\BZ_{2}$-degree $|E_{ij}| = \ol{i}+\ol{j}$. We also fix a sign sequence
$\vartheta_{V} \coloneqq (\vartheta_{1}, \ldots, \vartheta_{N}) \in \{\pm 1\}^{N}$ satisfying $\vartheta_{i} = 1$
if $\ol{i} = \bbar{0}$ and $\vartheta_{i} = -\vartheta_{i'}$ if $\ol{i} = \bbar{1}$, so that
\begin{equation*}
  \vartheta_{i}^{2} = 1 \qquad \text{and} \qquad
  \vartheta_{i}\vartheta_{i'} = (-1)^{\ol{i}} \qquad \text{for all} \quad i \in \mathbb{I} \,.
\end{equation*}

The \emph{orthosymplectic Lie superalgebra} $\fosp(V)$ is the Lie subalgebra of $\gl(V) = \End_{\BC}(V)$ preserving the
bilinear form with matrix $G = \sum_{i=1}^{N} \vartheta_{i} E_{i'i}$ (see~\cite[\S4.1]{ht2} for details), and is spanned~by
\begin{equation}\label{eq:X-ij-def}
  X_{ij} \coloneqq E_{ij} - (-1)^{\ol{i}(\ol{i}+\ol{j})} \vartheta_{i}\vartheta_{j} E_{j'i'}
  \qquad (1 \leq i,j \leq N) \,.
\end{equation}
Let $\fh$ be the Cartan subalgebra of $\fosp(V)$ consisting of diagonal matrices, with basis $\{X_{ii}\}_{i=1}^{s}$, and let
$\{\varepsilon_{i}\}_{i=1}^{s}$ be the dual basis of $\fh^{*}$. Setting $\varepsilon_{i'} \coloneqq -\varepsilon_{i}$ for
$1 \leq i \leq s$ (and $\varepsilon_{s+1} \coloneqq 0$ for odd $N$), the non-degenerate supertrace form on $\fosp(V)$
induces a symmetric bilinear form $(\cdot,\cdot)$ on $\fh^{*}$ given by
\begin{equation}\label{eq:epsilon-form}
  (\varepsilon_{i}, \varepsilon_{j}) = (\delta_{ij} - \delta_{ij'}) (-1)^{\ol{i}}
  \qquad \text{for all} \quad 1 \leq i,j \leq N \,.
\end{equation}

With respect to the choice of upper-triangular Borel subalgebra of $\fosp(V)$, the positive root system
$\Phi^{+} = \Phi_{\bbar{0}}^{+} \cup \Phi_{\bbar{1}}^{+}$ is given by
\begin{equation*}
  \Phi^{+} = \big\{\varepsilon_{a} - \varepsilon_{b} \,\big|\, 1 \leq a < b < a' \leq N \big\}
  \cup \big\{2\varepsilon_{a} \,\big|\, 1 \leq a \leq s\,,\ \ol{a} = \bbar{1} \big\} \,,
\end{equation*}
where $\Phi_{\bbar{0}}^{+}$ and $\Phi_{\bbar{1}}^{+}$ denote the subsets of even and odd positive roots, respectively.
We also define the \emph{Weyl vector} $\rho \in \fh^{*}$ via
\begin{equation}\label{eq:weyl-vector}
  \rho \coloneqq
  \frac{1}{2} {\sum}_{\alpha \in \Phi_{\bbar{0}}^{+}} \alpha
  - \frac{1}{2} {\sum}_{\alpha \in \Phi_{\bbar{1}}^{+}} \alpha \,.
\end{equation}


\subsection{Normalized affine $R$-matrix $\ol{R}(z)$}\label{ssec:normalized-R-matrix}
\

Let $V_{q} \coloneqq \BC(q) \otimes_{\BC} V$. The \emph{(normalized) affine orthosymplectic $R$-matrix}
$\ol{R}(z) \in \BC(q)(z) \otimes_{\BC(q)} \End(V_{q})^{\otimes 2}$ is given by the following explicit formula
(cf.~\cite[Theorem 6.3]{ht1}):
{\allowdisplaybreaks
\begin{equation}\label{eq:R-affine-BCD}
\begin{split}
  \frac{zq - q^{-1}}{z-1} \ol{R}(z)
  = &\, \sum_{i,j = 1}^{N} q^{(\varepsilon_{i}, \varepsilon_{j})} E_{ii} \otimes E_{jj}
     + (q-q^{-1}) \sum_{i>j} (-1)^{\ol{j}} E_{ij} \otimes E_{ji} \\
  & - (q-q^{-1}) \sum_{i>j} (-1)^{\ol{i}\,\ol{j}} \vartheta_{i}\vartheta_{j} q^{(\rho,\varepsilon_{i}-\varepsilon_{j})}
    q^{-(\varepsilon_{i},\varepsilon_{i})/2}q^{(\varepsilon_{j},\varepsilon_{j})/2} E_{ij} \otimes E_{i'j'} \\
  & + \frac{q-q^{-1}}{z-1} \sum_{i,j = 1}^{N} (-1)^{\ol{j}} E_{ij} \otimes E_{ji} \\
  & - \frac{(q-q^{-1})\xi}{z-\xi} \sum_{i,j = 1}^{N} (-1)^{\ol{i}\,\ol{j}} \vartheta_{i}\vartheta_{j}
    q^{(\rho,\varepsilon_{i}-\varepsilon_{j})}q^{-(\varepsilon_{i},\varepsilon_{i})/2}q^{(\varepsilon_{j},\varepsilon_{j})/2}
    E_{ij} \otimes E_{i'j'}
\end{split}
\end{equation}}
where we set $\xi \coloneqq q^{-m+n+2}$. Here, the normalization is chosen so that $\ol{R}(z)$ is \emph{unitary}, i.e.
\begin{equation}\label{eq:R-affine-unitary}
  \ol{R}(z^{-1})\ol{R}_{21}(z) = \ID = \ol{R}_{21}(z)\ol{R}(z^{-1}) \,,
\end{equation}
where $\ol{R}_{21}(z) \coloneqq \tau_{12}(\ol{R}(z))$ (see~\cite[\S3.2]{ht2} for details on the leg-numbering notation).

\begin{Rem}\label{rem:R-comparison}
The right-hand side of~\eqref{eq:R-affine-BCD} is obtained from $R(z)$ of~\cite[Theorem~6.3]{ht1} via
$$
  R(z) \mapsto (z-\xi)^{-1} \wt{f}^{-1/2} R(z) \wt{f}^{1/2}
$$
with $\wt{f}^{1/2} \coloneqq \sum_{i,j=1}^{N} q^{-(\varepsilon_{i},\varepsilon_{j})/2} E_{ii} \otimes E_{jj}$,
where the conjugation by $\wt{f}^{1/2}$ accounts for passing from the coproduct~\cite[(2.23)]{ht1} to~\eqref{eq:DJ-Hopf},
similarly to~\cite[Remark~4.7]{ht1}.
\end{Rem}

Consider the operator
\begin{equation}\label{eq:R-check}
  \check{R}(z) \coloneqq \tau_{V_q V_q} \circ \ol{R}(z) \,.
\end{equation}
Then, we note that the unitarity condition~\eqref{eq:R-affine-unitary} is equivalent to
\begin{equation}\label{eq:R-check-unitary}
  \check{R}(z^{-1})\check{R}(z) = \ID = \check{R}(z)\check{R}(z^{-1}) \,.
\end{equation}
Notably, $\ol{R}(z)$ satisfies the parameter-dependent \emph{quantum Yang--Baxter equation}:
\begin{equation*}
  \ol{R}_{12}(u/v)\ol{R}_{13}(u/w)\ol{R}_{23}(v/w) = \ol{R}_{23}(v/w)\ol{R}_{13}(u/w)\ol{R}_{12}(u/v) \,,
\end{equation*}
or equivalently, $\check{R}(z)$ satisfies the parameter-dependent \emph{braid relation}:
\begin{equation}\label{eq:braid-rel}
  \check{R}_{12}(u/v)\check{R}_{23}(u/w)\check{R}_{12}(v/w) = \check{R}_{23}(v/w)\check{R}_{12}(u/w)\check{R}_{23}(u/v) \,.
\end{equation}

\begin{Rem}\label{rem:R-unitary}
The verification of~\eqref{eq:R-affine-unitary}, or equivalently~\eqref{eq:R-check-unitary}, is straightforward by direct
computation. More conceptually, in view of~\eqref{eq:R-univ-eval}, the operator $\check{R}(u/v)$ is a $\UdqV$-intertwiner
$V_{q}[u,u^{-1}] \otimes V_{q}[v,v^{-1}] \to V_{q}[v,v^{-1}] \otimes V_{q}[u,u^{-1}]$ between the tensor product of evaluation
modules from Appendix~\ref{ssec:eval-modules}. Since the tensor product of evaluation modules is generically irreducible, the
composition $\check{R}(z^{-1})\check{R}(z)$ (with $z = u/v$) is a scalar multiple of the identity, so it suffices to verify
that $\check{R}(z^{-1})\check{R}(z)$ fixes the vector $v_{1} \otimes v_{1}$, which is a direct check.
\end{Rem}

Finally, for the computations in Section~\ref{sec:rank-reduction}, we define the scalar function
\begin{equation*}
  \phi(z) \coloneqq \frac{zq - q^{-1}}{z-1}
\end{equation*}
and the rescaled operator
\begin{equation}\label{eq:calR-hat-def}
  \hat{\CR}(z) \coloneqq \phi(z) \check{R}(z) = \frac{zq - q^{-1}}{z-1} \check{R}(z) \,.
\end{equation}
Explicitly, $\hat{\CR}(z)$ is given by:
\begin{equation}\label{eq:calR-hat-explicit}
\begin{split}
  \hat{\CR}(z)
  &= \sum_{i \neq j,j'} (-1)^{\ol{j}} E_{ij} \otimes E_{ji}
   + \sum_{i \neq i'} (-1)^{\ol{i}} \cdot
     \frac{zq^{(\varepsilon_{i},\varepsilon_{i})} - q^{-(\varepsilon_{i},\varepsilon_{i})}}{z-1} E_{ii} \otimes E_{ii} \\
  &\hspace{15pt} + \sum_{i \neq i'} (-1)^{\ol{i}} \cdot
   \frac{zq^{-(\varepsilon_{i},\varepsilon_{i})} - \xi q^{(\varepsilon_{i},\varepsilon_{i})}}{z-\xi} E_{ii'} \otimes E_{i'i} \\
  &\hspace{15pt} + \sum_{i = i'} (-1)^{\ol{i}} \left(1 + (q-q^{-1})\frac{(1-\xi)z}{(z-1)(z-\xi)}\right) E_{ii} \otimes E_{ii} \\
  &\hspace{15pt} + (q-q^{-1})\frac{z}{z-1} \sum_{i<j} E_{ii} \otimes E_{jj}
   + (q-q^{-1})\frac{1}{z-1} \sum_{i>j} E_{ii} \otimes E_{jj} \\
  &\hspace{15pt} - (q-q^{-1})\frac{z}{z-\xi} \sum_{i>j} (-1)^{\ol{i}(\ol{i}+\ol{j})} \vartheta_{i}\vartheta_{j}
    q^{(\rho,\varepsilon_{i}-\varepsilon_{j})}q^{-(\varepsilon_{i},\varepsilon_{i})/2}q^{(\varepsilon_{j},\varepsilon_{j})/2}
    E_{i'j} \otimes E_{ij'} \\
  &\hspace{15pt} - (q-q^{-1})\frac{\xi}{z-\xi} \sum_{i<j} (-1)^{\ol{i}(\ol{i}+\ol{j})} \vartheta_{i}\vartheta_{j}
   q^{(\rho,\varepsilon_{i}-\varepsilon_{j})}q^{-(\varepsilon_{i},\varepsilon_{i})/2}q^{(\varepsilon_{j},\varepsilon_{j})/2}
   E_{i'j} \otimes E_{ij'} \,.
\end{split}
\end{equation}
Note that $\hat{\CR}(z)$ has simple poles only at $z = 1$ and $z = \xi$.


\medskip

\section{The superalgebras $U(\ol{R}(z))$ and $\wt{U}(\ol{R}(z))$}\label{sec:RLL-algebra}



\subsection{The superalgebra $U(\ol{R}(z))$}\label{ssec:def-U-R}
\

We employ the standard leg-numbering notation (for details, see~\cite[\S3.2]{ht2}).
Consider the free $\BC(q)$-superalgebra
\begin{equation*}
  \CT \coloneqq \BC(q) \big\langle l^{\pm}_{ij}[\mp k] \,,\, q^{\pm c} \,\big|\, 1 \leq i,j \leq N\,, \; k \geq 0 \big\rangle
\end{equation*}
equipped with the $\BZ_{2}$-grading
\begin{equation*}
  \big|l^{\pm}_{ij}[\mp k]\big| = \ol{i} + \ol{j} = |E_{ij}| \,, \qquad \big|q^{\pm c}\big| = \bbar{0} \,.
\end{equation*}
We organize these generators into generating series
\begin{equation*}
  l^{\pm}_{ij}(u) \coloneqq \sum_{k \geq 0} l^{\pm}_{ij}[\mp k] u^{\pm k} \in \CT[\![u^{\pm 1}]\!]
\end{equation*}
and matrices
\begin{equation*}
  L^{\pm}(u) \coloneqq \sum_{i,j=1}^{N} l^{\pm}_{ij}(u) \otimes E_{ij}
  \in \CT[\![u^{\pm 1}]\!] \otimes_{\BC(q)} \End(V_{q}) \,.
\end{equation*}

\begin{Def}\label{def:U(R(z))}
The $\BC(q)$-superalgebra $U(\ol{R}(z))$ is the quotient of $\CT$ by the following relations:
\begin{align}
  & q^{\pm c} q^{\mp c} = 1\,, \qquad q^{\pm c} - \textrm{central}\,, \label{eq:U(R(Z))-rel-central}\\
  & l^{+}_{ij}[0] = l^{-}_{ji}[0] = 0 \quad (1 \leq j < i \leq N)\,, \qquad
    l^{\pm}_{ii}[0] l^{\mp}_{ii}[0] = 1 \quad (1 \leq i \leq N)\,, \label{eq:U(R(Z))-rel-0}
\end{align}
together with all matrix coefficients in $\CT$ of the following matrix relations:
\begin{align}
  \ol{R}_{12}(u/v)L^{\pm}_{1}(u)L^{\pm}_{2}(v) &= L^{\pm}_{2}(v)L^{\pm}_{1}(u)\ol{R}_{12}(u/v)\,,
    \label{eq:U(R(Z))-rel-1} \\
  \ol{R}_{12}(q^{c}u/v)L^{+}_{1}(u)L^{-}_{2}(v) &= L^{-}_{2}(v)L^{+}_{1}(u)\ol{R}_{12}(q^{-c}u/v)\,.
    \label{eq:U(R(Z))-rel-2}
\end{align}
\end{Def}

We note that $\ol{R}_{12}(u/v)$ in~\eqref{eq:U(R(Z))-rel-1} can be expanded as a formal power series in either $u/v$ or $v/u$
(yielding equivalent sets of relations), whereas $\ol{R}_{12}(q^{\pm c}u/v)$ in~\eqref{eq:U(R(Z))-rel-2} are expanded as
formal power series in $u/v$.

\begin{Rem}\label{rem:L-invertible}
Relations~\eqref{eq:U(R(Z))-rel-0} imply that the matrices $L^{\pm}[0] \coloneqq \sum_{i,j=1}^{N} l^{\pm}_{ij}[0] \otimes E_{ij}$
are upper and lower triangular, respectively, with invertible diagonal entries. Consequently, $L^{\pm}(u)$ are invertible in
$U(\ol{R}(z))[\![u^{\pm 1}]\!] \otimes \End(V_{q})$.
\end{Rem}


\subsection{Sign conventions and the twisted superalgebra $\wt{U}(\ol{R}(z))$}\label{ssec:twisted-U-R}
\

For a $\BC(q)$-superalgebra $\CA$ (such as $\CT$ or its quotients) and $\nu,\eta \in \{\pm\}$, in the standard tensor product
superalgebra $\CA[\![u^{\nu}, v^{\eta}]\!] \otimes \End(V_{q})^{\otimes 2}$ (where $z^{\pm} \coloneqq z^{\pm 1}$), multiplying
$L^{\nu}_{1}(u)$ and $L^{\eta}_{2}(v)$ introduces Koszul signs from permuting the matrix units past the generating series:
\begin{equation}\label{eq:standard-L1L2}
  L^{\nu}_{1}(u)L^{\eta}_{2}(v)
  = \sum_{i,j,k,l=1}^{N} (-1)^{(\ol{i}+\ol{j})(\ol{k}+\ol{l})}
    l^{\nu}_{ij}(u)l^{\eta}_{kl}(v) \otimes E_{ij} \otimes E_{kl} \,.
\end{equation}
Consider $G \coloneqq \BZ_{2} \times \BZ_{2}$. As established in~\cite[\S3.5]{ht2}, these Koszul signs between the coefficient
superalgebra and the auxiliary matrix factors can be decoupled by viewing $\CA$ and $\End(V_{q})^{\otimes k}$ as $G$-graded spaces
concentrated in degrees $\BZ_{2} \times \{\bbar{0}\}$ and $\{\bbar{0}\} \times \BZ_{2}$, respectively, and using the twisted
braiding induced by the bicharacter
\begin{equation*}
  \beta' \colon G \times G \to \BC^{\times} \,, \qquad
  \beta'\big((g_{1},g_{2}), (h_{1},h_{2})\big) \coloneqq (-1)^{g_{1}h_{1} + g_{2}h_{2}} \,.
\end{equation*}
In the resulting \emph{twisted} tensor product $\CA \otimes^{\beta'} \End(V_{q})^{\otimes k}$, the multiplication is given by
$$
  (a \otimes X)(a' \otimes X') = (aa') \otimes (XX') \qquad \forall\, a,a' \in \CA \,,\, X,X' \in \End(V_{q})^{\otimes k}
$$
where $aa'$ and $XX'$ denote the products in the superalgebras $\CA$ and $\End(V_{q})^{\otimes k}$, respectively. Thus,
in contrast to~\eqref{eq:standard-L1L2}, in $\CA[\![u^{\nu}, v^{\eta}]\!] \otimes^{\beta'} \End(V_{q})^{\otimes 2}$ we have
\begin{equation}\label{eq:twisted-L1L2}
  L^{\nu}_{1}(u)L^{\eta}_{2}(v)
  = \sum_{i,j,k,l=1}^{N} l^{\nu}_{ij}(u)l^{\eta}_{kl}(v) \otimes E_{ij} \otimes E_{kl} \,.
\end{equation}
Moreover, the product in $\CA[\![z^{\nu}]\!] \otimes^{\beta'} \End(V_{q})$ obeys the usual sign-free matrix multiplication rule.

\begin{Def}\label{def:U-tilde}
The \emph{twisted superalgebra} $\wt{U}(\ol{R}(z))$ is the quotient of $\CT$ by the
relations~\eqref{eq:U(R(Z))-rel-central}--\eqref{eq:U(R(Z))-rel-2}, where the matrix products in~\eqref{eq:U(R(Z))-rel-1}
and~\eqref{eq:U(R(Z))-rel-2} are evaluated via~\eqref{eq:twisted-L1L2} in the twisted tensor product $\otimes^{\beta'}$
instead of the standard tensor product $\otimes$.
\end{Def}

Recall from~\cite[\S3.5]{ht2} that for any superalgebra $A$ and the normalized $2$-cocycle
$\zeta \colon \BZ_{2} \times \BZ_{2} \to \{\pm 1\}$ given by $\zeta(\bbar{i},\bbar{j}) \coloneqq (-1)^{\bbar{i}\,\bbar{j}}$,
the \emph{$2$-cocycle twist} $A^{\zeta}$ is the superalgebra with the same underlying superspace as $A$ and twisted multiplication
$$
  a \cdot_{\zeta} a' \coloneqq (-1)^{|a||a'|} aa' \qquad \mathrm{for\ homogeneous} \quad a,a' \in A \,.
$$
The exact same proof as in~\cite[Proposition~3.16]{ht2} yields:

\begin{Prop}\label{prop:affine-twisted-rtt}
(a) The assignment $l^{\pm}_{ij}[\mp k] \mapsto l^{\pm}_{ij}[\mp k]$ and $q^{\pm c} \mapsto q^{\pm c}$ gives rise to
a superalgebra isomorphism
\begin{equation*}
  U(\ol{R}(z)) \iso \wt{U}(\ol{R}(z))^{\zeta} \,.
\end{equation*}

\smallskip
\noindent
(b) The assignment $l^{\pm}_{ij}[\mp k] \mapsto l^{\pm}_{ij}[\mp k]$ and $q^{\pm c} \mapsto q^{\pm c}$ gives rise to
a superalgebra isomorphism
\begin{equation*}
  \wt{U}(\ol{R}(z)) \iso U(\ol{R}(z))^{\zeta} \,.
\end{equation*}
\end{Prop}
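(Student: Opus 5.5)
The plan is to compare the two sets of defining relations coefficient by coefficient. Since $U(\ol{R}(z))$ and $\wt{U}(\ol{R}(z))$ are both quotients of the same free superalgebra $\CT$, it suffices to identify the relations. We first observe that $\CT^{\zeta}$ is again free on the same generators. Indeed, the identity map on the superspace $\CT$ is an isomorphism $\CT\iso\CT^{\zeta}$ sending a monomial $x_1\cdots x_r$ to $\pm x_1\cdot_\zeta\cdots\cdot_\zeta x_r$, with the sign $\prod_{a<b}(-1)^{|x_a||x_b|}$. Moreover, passing to quotients commutes with the twist: $(\CT/I)^{\zeta}=\CT^{\zeta}/I$ for any graded two-sided ideal $I$, because $a\cdot_\zeta x=\pm ax$ for homogeneous elements. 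So part (a) reduces to the following claim. Let $\varphi\colon\CT\to\CT^{\zeta}$ be the algebra map that is the identity on generators. We must show that $\varphi$ sends the defining relations of $U(\ol{R}(z))$ to elements of $\CT^{\zeta}$ whose span equals the span of the defining relations of $\wt{U}(\ol{R}(z))$, rewritten via $\cdot_\zeta$. Part (b) follows by the same argument, or formally from (a), since $(A^{\zeta})^{\zeta}=A$ because $\zeta^2=1$.

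The relations \eqref{eq:U(R(Z))-rel-central} and \eqref{eq:U(R(Z))-rel-0} involve only even elements or vanishing conditions, and they are clearly preserved. Note that $l^{\pm}_{ii}[0]$ is even, so its products acquire no sign. The core step is the quadratic relations \eqref{eq:U(R(Z))-rel-1} and \eqref{eq:U(R(Z))-rel-2}. We expand both sides in the basis $E_{ij}\otimes E_{kl}$, using \eqref{eq:standard-L1L2} in the standard setting and \eqref{eq:twisted-L1L2} in the twisted setting. We write $\ol{R}(z)=\sum R^{ik}_{jl}(z)E_{ij}\otimes E_{kl}$, noting that $\ol{R}$ is even, so only terms with $\ol i+\ol j+\ol k+\ol l=0$ appear. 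The coefficient of $E_{ac}\otimes E_{bd}$ on the left of \eqref{eq:U(R(Z))-rel-1} in the standard convention is then a sum over $p,r$ of $R^{ab}_{pr}\,l_{pc}(u)l_{rd}(v)$. This term carries the Koszul sign $(-1)^{(\ol p+\ol c)(\ol r+\ol d)}$ from \eqref{eq:standard-L1L2}, together with the signs from multiplying $E_{ap}\otimes E_{br}$ by $E_{pc}\otimes E_{rd}$ in $\End(V)^{\otimes2}$. The twisted version of the same coefficient differs only by dropping the Koszul sign. Substituting $l_{pc}l_{rd}=(-1)^{(\ol p+\ol c)(\ol r+\ol d)}\,l_{pc}\cdot_\zeta l_{rd}$ converts the standard relation in $\CT$ exactly into the twisted relation read in $\CT^{\zeta}$. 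The right-hand side is handled the same way, with $l_{rd}(v)l_{pc}(u)$ and the analogous sign.

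The main obstacle is bookkeeping. We must check that the remaining signs, coming from matrix-unit multiplication in $\End(V)^{\otimes 2}$ and from the parities of the $R$-matrix entries, are the same in both settings, so that exactly the Koszul factor is absorbed. This holds because the twisted product in \eqref{eq:twisted-L1L2} is defined to keep the superalgebra structure of $\End(V_q)^{\otimes2}$ unchanged and to alter only the braiding between the $\CA$-factor and the matrix factor, as in \cite[Proposition~3.16]{ht2}. The power-series expansions in $u/v$ and the central shifts $q^{\pm c}$ play no role, since $q^{\pm c}$ is even. The map $\varphi$ therefore descends to the stated isomorphism, with inverse given by the same assignment.
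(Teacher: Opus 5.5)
Your proposal is correct and matches the paper's approach: the paper simply invokes the proof of \cite[Proposition~3.16]{ht2}, which amounts to your observation that the standard and twisted products \eqref{eq:standard-L1L2}, \eqref{eq:twisted-L1L2} (and likewise for $L_{2}L_{1}$) differ only by the Koszul factor $(-1)^{(\ol{i}+\ol{j})(\ol{k}+\ol{l})}$, which the cocycle twist $\zeta$ absorbs, while the even entries of $\ol{R}$, the even central $q^{\pm c}$, and the zero-mode relations are unaffected. The only difference is cosmetic: you deduce (b) from (a) via $(A^{\zeta})^{\zeta}=A$, whereas Remark~\ref{rem:twist-functoriality} deduces (a) from (b) in the same way.
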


\begin{Rem}\label{rem:twist-functoriality}
By functoriality of the $2$-cocycle twist, any superalgebra homomorphism $\psi \colon A \to B$ defines a superalgebra
homomorphism $\psi^{\zeta} \colon A^{\zeta} \to B^{\zeta}$ given by the same underlying set-theoretic map
(and vice versa, since $A$ is obtained by untwisting via $A = (A^{\zeta})^{\zeta}$).
In particular, part~(a) of Proposition~\ref{prop:affine-twisted-rtt} follows immediately from part~(b).
\end{Rem}

Although our main results are stated for both $U(\ol{R}(z))$ and $\wt{U}(\ol{R}(z))$, the twisted superalgebra $\wt{U}(\ol{R}(z))$
is better suited for explicit computations; henceforth, we shall work within $\wt{U}(\ol{R}(z))$ unless stated otherwise.
Let $\{v^{*}_{1}, \ldots, v^{*}_{N}\}$ be the dual basis of $V_{q}^{*}$, and denote
\begin{equation*}
  |i_{1}, \ldots, i_{r}\rangle \coloneqq v_{i_{1}} \otimes \cdots \otimes v_{i_{r}} \in V_{q}^{\otimes r} \,, \qquad
  \langle j_{1}, \ldots, j_{r}| \coloneqq v^{*}_{j_{1}} \otimes \cdots \otimes v^{*}_{j_{r}} \in (V_{q}^{*})^{\otimes r} \,.
\end{equation*}
The canonical pairing between $(V_{q}^{*})^{\otimes r}$ and $V_{q}^{\otimes r}$ in $\sVect_{\BC(q)}$ is given by
\begin{equation}\label{eq:canonical-r-pairing}
  \langle j_{1}, \ldots, j_{r} | i_{1}, \ldots, i_{r} \rangle
  = (-1)^{\sum_{1 \leq a < b \leq r} \ol{i_{a}}\,\ol{i_{b}}} \delta_{j_{1}i_{1}} \cdots \delta_{j_{r}i_{r}} \,.
\end{equation}
In particular, for $r=2$, we have
\begin{equation*}
  \langle k,l | E_{ab} \otimes E_{cd} | i,j \rangle
  = \delta_{ka}\delta_{bi}\delta_{lc}\delta_{dj} (-1)^{\ol{k}\,\ol{l}} (-1)^{\ol{i}\,\ol{j}} (-1)^{\ol{i}\,\ol{l}} \,,
\end{equation*}
which extends to $\CA[\![u^{\nu}, v^{\eta}]\!] \otimes^{\beta'} \End(V_{q})^{\otimes 2}$ via~\eqref{eq:twisted-L1L2} to yield:
\begin{equation}\label{eq:twisted-matrix-element}
  \langle k,l | L^{\nu}_{1}(u) L^{\eta}_{2}(v) | i,j \rangle
  = (-1)^{\ol{k}\,\ol{l}} (-1)^{\ol{i}\,\ol{j}} (-1)^{\ol{i}\,\ol{l}} l^{\nu}_{ki}(u) l^{\eta}_{lj}(v) \,.
\end{equation}
Analogously, for $r=4$ and any $X, Y \in \CA \otimes^{\beta'} \End(V_{q})^{\otimes 2}$, we have
\begin{equation}\label{eq:twisted-4-matrix-element}
\begin{split}
  \langle k_{1},k_{2},l_{1},l_{2} &| X_{12} Y_{34} | i_{1},i_{2},j_{1},j_{2} \rangle \\
  & = (-1)^{(\ol{k_{1}}+\ol{k_{2}})(\ol{l_{1}}+\ol{l_{2}}) + (\ol{i_{1}}+\ol{i_{2}})(\ol{j_{1}}+\ol{j_{2}}) +
            (\ol{i_{1}}+\ol{i_{2}})(\ol{l_{1}}+\ol{l_{2}})}
  \langle k_{1},k_{2} | X | i_{1},i_{2} \rangle \langle l_{1},l_{2} | Y | j_{1},j_{2} \rangle \,.
\end{split}
\end{equation}


\medskip

\section{Rank reduction}\label{sec:rank-reduction}

In this section, we state our main result (Theorem~\ref{thm:rank-reduction}): the rank reduction homomorphisms.


\subsection{Quasideterminants}\label{ssec:quasideterminant}
\

For $1 < i,j < 1'$, we define the \emph{quasideterminant} $s^{\pm}_{ij}(u) = \sum_{k \geq 0} s^{\pm}_{ij}[\mp k] u^{\pm k}$ via
\begin{equation}\label{eq:quasidet-untwisted}
  s^{\pm}_{ij}(u) \coloneqq l^{\pm}_{ij}(u) - (-1)^{(\ol{1}+\ol{i})(\ol{1}+\ol{j})}
    l^{\pm}_{i1}(u) (l^{\pm}_{11}(u))^{-1} l^{\pm}_{1j}(u) \quad\in\quad U(\ol{R}(z))[\![u^{\pm 1}]\!] \,,
\end{equation}
and the corresponding \emph{twisted quasideterminant} $\wt{s}^{\pm}_{ij}(u) = \sum_{k \geq 0} \wt{s}^{\pm}_{ij}[\mp k] u^{\pm k}$
via
\begin{equation}\label{eq:quasidet-1}
  \wt{s}^{\pm}_{ij}(u) \coloneqq l^{\pm}_{ij}(u) -
    l^{\pm}_{i1}(u) (l^{\pm}_{11}(u))^{-1} l^{\pm}_{1j}(u) \quad\in\quad \wt{U}(\ol{R}(z))[\![u^{\pm 1}]\!] \,.
\end{equation}
We organize these series into the $(N-2)\times (N-2)$ matrices
\begin{equation*}
  S^{\pm}(u) \coloneqq \sum_{1<i,j<1'} s^{\pm}_{ij}(u) \otimes E_{ij} \,, \qquad
  \wt{S}^{\pm}(u) \coloneqq \sum_{1<i,j<1'} \wt{s}^{\pm}_{ij}(u) \otimes E_{ij}\,.
\end{equation*}

\begin{Rem}\label{rem:quasidet-twist}
Since $\big|l^{\pm}_{11}[\mp k]\big| = \bbar{0}$ and $\big|l^{\pm}_{ab}[\mp k]\big| = \ol{a} + \ol{b}$, the isomorphism
$\wt{U}(\ol{R}(z)) \iso U(\ol{R}(z))^{\zeta}$ of Proposition~\ref{prop:affine-twisted-rtt}(b) maps $\wt{s}^{\pm}_{ij}(u)$
to $s^{\pm}_{ij}(u)$. Moreover, since $l^{+}_{i1}[0] = l^{-}_{1j}[0] = 0$ for $1 < i,j < 1'$, we have
$\wt{s}^{\pm}_{ij}[0] = s^{\pm}_{ij}[0] = l^{\pm}_{ij}[0]$, and hence by~\eqref{eq:U(R(Z))-rel-0}:
\begin{equation}\label{eq:s-zero-mode}
  \wt{s}^{+}_{ij}[0] = \wt{s}^{-}_{ji}[0] = 0 \quad (1 < j < i < 1') \qquad \text{and} \qquad
  \wt{s}^{\pm}_{ii}[0] \wt{s}^{\mp}_{ii}[0] = 1 \quad (1 < i < 1') \,,
\end{equation}
with the same relations holding for $s^{\pm}_{ij}[0]$ in $U(\ol{R}(z))$.
\end{Rem}

Working in $\wt{U}(\ol{R}(z))$, comparing the $E_{i1} \otimes E_{11}$ components of both sides of~\eqref{eq:U(R(Z))-rel-1}
(after multiplying by $q\frac{u}{v} - q^{-1}$ to clear the common denominator), we obtain:
\begin{equation*}
  \left(\frac{u}{v} - 1\right) l_{i1}^{\pm}(u)l_{11}^{\pm}(v)
    + (-1)^{\ol{1}}(q-q^{-1})\frac{u}{v} l_{11}^{\pm}(u)l_{i1}^{\pm}(v)
  = \left(q^{(\varepsilon_{1},\varepsilon_{1})}\frac{u}{v} - q^{-(\varepsilon_{1},\varepsilon_{1})}\right)
    l_{11}^{\pm}(v)l_{i1}^{\pm}(u) \,.
\end{equation*}
The right-hand side vanishes under the specialization $u = \kappa_{0} v$, where
\begin{equation}\label{eq:kappa-o}
  \kappa_{0} \coloneqq q^{-2(\varepsilon_{1},\varepsilon_{1})} \,,
\end{equation}
so that
\begin{equation*}
  (\kappa_{0} - 1) l_{i1}^{\pm}(\kappa_{0} v)l_{11}^{\pm}(v)
  + (-1)^{\ol{1}}(q-q^{-1})\kappa_{0} l_{11}^{\pm}(\kappa_{0} v)l_{i1}^{\pm}(v) = 0 \,.
\end{equation*}
The above equality simplifies to
\begin{equation}\label{eq:l11-li1-comm}
  l_{11}^{\pm}(\kappa_{0} v)l_{i1}^{\pm}(v)
  = q^{(\varepsilon_{1},\varepsilon_{1})} l_{i1}^{\pm}(\kappa_{0} v)l_{11}^{\pm}(v) \,.
\end{equation}
Replacing $v$ by $u$ in~\eqref{eq:l11-li1-comm} and substituting the resulting relation back into~\eqref{eq:quasidet-1} yields
\begin{equation}\label{eq:quasidet-2}
  \wt{s}^{\pm}_{ij}(u) = l^{\pm}_{ij}(u) -
  q^{(\varepsilon_{1},\varepsilon_{1})} (l^{\pm}_{11}(\kappa_{0} u))^{-1} l^{\pm}_{i1}(\kappa_{0} u) l^{\pm}_{1j}(u) \,.
\end{equation}

To establish the relations among these $\wt{s}^{\pm}_{ij}(u)$, we first rewrite the defining
relations~\eqref{eq:U(R(Z))-rel-1}--\eqref{eq:U(R(Z))-rel-2} in terms of $\check{R}(z)$ from~\eqref{eq:R-check}:
\begin{align}
  \check{R}_{12}(u/v) L^{\pm}_{1}(u) L^{\pm}_{2}(v)
  &= L^{\pm}_{1}(v) L^{\pm}_{2}(u) \check{R}_{12}(u/v) \,, \label{eq:braided-RLL-1}
  \\
  \check{R}_{12}(q^{c}u/v) L^{+}_{1}(u) L^{-}_{2}(v)
  &= L^{-}_{1}(v) L^{+}_{2}(u) \check{R}_{12}(q^{-c}u/v) \,, \label{eq:braided-RLL-2}
\end{align}
or equivalently, in terms of the rescaled operator $\hat{\CR}(z) = \phi(z)\check{R}(z)$ from~\eqref{eq:calR-hat-def}:
\begin{align}
  \hat{\CR}_{12}(u/v) L^{\pm}_{1}(u) L^{\pm}_{2}(v)
  &= L^{\pm}_{1}(v) L^{\pm}_{2}(u) \hat{\CR}_{12}(u/v) \,, \label{eq:rescaled-braided-RLL-1} \\
  \phi(q^{c}u/v)^{-1} \hat{\CR}_{12}(q^{c}u/v) L^{+}_{1}(u) L^{-}_{2}(v)
  &= \phi(q^{-c}u/v)^{-1} L^{-}_{1}(v) L^{+}_{2}(u) \hat{\CR}_{12}(q^{-c}u/v) \,. \label{eq:rescaled-braided-RLL-2}
\end{align}
Using the explicit formula~\eqref{eq:calR-hat-explicit} for $\hat{\CR}(z)$ and~\eqref{eq:twisted-matrix-element},
we compute $\langle 1,i|\hat{\CR}_{12}(u/v)L^{\pm}_{1}(u)L^{\pm}_{2}(v)|1,j \rangle$ (which is equal to
$\langle 1,i|L^{\pm}_{1}(v)L^{\pm}_{2}(u)\hat{\CR}_{12}(u/v)|1,j \rangle$ by~\eqref{eq:rescaled-braided-RLL-1}):
\begin{align*}
  \langle 1,i|\hat{\CR}_{12}(u/v)L^{\pm}_{1}(u)L^{\pm}_{2}(v)|1,j \rangle
  &= (-1)^{\ol{1}\,\ol{i}} \langle i,1|L^{\pm}_{1}(u)L^{\pm}_{2}(v)|1,j \rangle
   + \frac{(q-q^{-1})\frac{u}{v}}{\frac{u}{v}-1} \langle 1,i|L^{\pm}_{1}(u)L^{\pm}_{2}(v)|1,j \rangle \\
  &= (-1)^{\ol{1}(\ol{1}+\ol{j})} l^{\pm}_{i1}(u)l^{\pm}_{1j}(v)
   + (-1)^{\ol{1}\,\ol{j}} \frac{(q-q^{-1})\frac{u}{v}}{\frac{u}{v}-1} l^{\pm}_{11}(u)l^{\pm}_{ij}(v) \,.
\end{align*}
Combining this with~\eqref{eq:quasidet-2} yields
\begin{equation}\label{eq:quasidet-1i1j}
  \lim_{\kappa \to \kappa_{0}} \langle 1,i|\hat{\CR}_{12}(\kappa)L^{\pm}_{1}(\kappa u)L^{\pm}_{2}(u)|1,j \rangle
  = -(-1)^{\ol{1}(\ol{1}+\ol{j})} q^{-(\varepsilon_{1},\varepsilon_{1})} \cdot l^{\pm}_{11}(\kappa_{0}u) \wt{s}^{\pm}_{ij}(u) \,.
\end{equation}
We note that this limit is well-defined as $\kappa_{0} \neq 1$, so the denominator appearing in the above formula is non-zero
at $u/v = \kappa_{0}$. Analogously, one obtains the following identities:
\begin{align}
  \lim_{\kappa \to \kappa_{0}} \langle 1,i|\hat{\CR}_{12}(\kappa)L^{\pm}_{1}(\kappa u)L^{\pm}_{2}(u)|j,1 \rangle
    &= l^{\pm}_{11}(\kappa_{0}u) \wt{s}^{\pm}_{ij}(u) \,, \label{eq:quasidet-1ij1} \\
  \lim_{\kappa \to \kappa_{0}} \langle i,1|\hat{\CR}_{12}(\kappa)L^{\pm}_{1}(\kappa u)L^{\pm}_{2}(u)|1,j \rangle
    &= (-1)^{\ol{1}(\ol{i}+\ol{j})} \cdot l^{\pm}_{11}(\kappa_{0}u) \wt{s}^{\pm}_{ij}(u) \,, \label{eq:quasidet-i11j} \\
  \lim_{\kappa \to \kappa_{0}} \langle i,1|\hat{\CR}_{12}(\kappa)L^{\pm}_{1}(\kappa u)L^{\pm}_{2}(u)|j,1 \rangle
    &= -(-1)^{\ol{1}(\ol{1}+\ol{i})} q^{(\varepsilon_{1},\varepsilon_{1})} \cdot
    l^{\pm}_{11}(\kappa_{0}u) \wt{s}^{\pm}_{ij}(u) \,. \label{eq:quasidet-i1j1}
\end{align}
These can be easily deduced from~\eqref{eq:quasidet-1i1j} by utilizing the following equalities:
\begin{equation}\label{eq:R-hat-kappa0-prop}
\begin{split}
  \lim_{\kappa \to \kappa_{0}} \langle i,1 | \hat{\CR}(\kappa)
  &= (-1)^{\ol{1}\,\ol{i}} \langle 1,i | - (-1)^{\ol{1}} q^{(\varepsilon_{1},\varepsilon_{1})} \langle i,1 |
   = -(-1)^{\ol{1}(\ol{1}+\ol{i})} q^{(\varepsilon_{1},\varepsilon_{1})}
     \lim_{\kappa \to \kappa_{0}} \langle 1,i | \hat{\CR}(\kappa) \,, \\
  \lim_{\kappa \to \kappa_{0}} \hat{\CR}(\kappa) | j,1 \rangle
  &= (-1)^{\ol{1}\,\ol{j}} | 1,j \rangle - (-1)^{\ol{1}} q^{(\varepsilon_{1},\varepsilon_{1})} | j,1 \rangle
   = -(-1)^{\ol{1}(\ol{1}+\ol{j})} q^{(\varepsilon_{1},\varepsilon_{1})}
     \lim_{\kappa \to \kappa_{0}} \hat{\CR}(\kappa) | 1,j \rangle \,,
\end{split}
\end{equation}
due to~\eqref{eq:calR-hat-explicit} combined with the identities
\begin{equation}\label{eq:kappa0-fractions}
  \frac{(q-q^{-1})\kappa_{0}}{\kappa_{0}-1} = -(-1)^{\ol{1}} q^{-(\varepsilon_{1},\varepsilon_{1})} \,,\qquad
  \frac{q-q^{-1}}{\kappa_{0}-1} = -(-1)^{\ol{1}} q^{(\varepsilon_{1},\varepsilon_{1})} \,.
\end{equation}
For convenience, let
\begin{equation}\label{eq:gamma-ij-def}
  \wt{\gamma}^{\pm}_{ij}(u) \coloneqq l^{\pm}_{11}(\kappa_{0}u) \wt{s}^{\pm}_{ij}(u) \qquad \forall\, 1 < i,j < 1' \,,
\end{equation}
and define the matrices
\begin{equation}\label{eq:Gamma-def}
  \wt{\Gamma}^{\pm}(u) \coloneqq \sum_{1<i,j<1'} \wt{\gamma}^{\pm}_{ij}(u) \otimes E_{ij}\,.
\end{equation}

We also note from~\eqref{eq:calR-hat-explicit} that
\begin{equation}\label{eq:quasidet-11}
\begin{split}
  \langle 1,1|\hat{\CR}(\kappa) &= (-1)^{\ol{1}} \cdot
  \frac{q^{(\varepsilon_{1},\varepsilon_{1})} \kappa - q^{-(\varepsilon_{1},\varepsilon_{1})}}
       {\kappa - 1} \cdot \langle 1,1| \,, \\
  \hat{\CR}(\kappa)|1,1 \rangle &= (-1)^{\ol{1}} \cdot
  \frac{q^{(\varepsilon_{1},\varepsilon_{1})} \kappa - q^{-(\varepsilon_{1},\varepsilon_{1})}}
       {\kappa - 1} \cdot |1,1 \rangle \,,
\end{split}
\end{equation}
and therefore
\begin{equation}\label{eq:quasidet-11-lim}
  \lim_{\kappa \to \kappa_{0}} \langle 1,1|\hat{\CR}(\kappa) = 0 \qquad\textrm{and}\qquad
  \lim_{\kappa \to \kappa_{0}} \hat{\CR}(\kappa)|1,1 \rangle = 0\,.
\end{equation}


\subsection{The rank-reduction homomorphism}\label{ssec:rank-reduction-hom}
\

To avoid confusion, we use the superscript $[s]$ to indicate the rank $s = \lfloor \frac{N}{2} \rfloor$ when necessary.
Specifically, let
\begin{equation*}
  V^{[s-1]} \coloneqq \Span_{\BC}\, \{v_{i} \,|\, 1 < i < 1'\} \subset V = V^{[s]}
\end{equation*}
be the subspace of codimension $2$ obtained by removing the basis vectors $v_{1}$ and $v_{1'}$. We denote the
Weyl vector~\eqref{eq:weyl-vector}, the parameter $\xi$, and the operators $\ol{R}(z), \hat{\CR}(z)$ from~\eqref{eq:R-affine-BCD},
\eqref{eq:calR-hat-explicit} associated with $V^{[s]}$ and $V^{[s-1]}$ by $\rho^{[s]}, \xi^{[s]}, \ol{R}^{[s]}(z), \hat{\CR}^{[s]}(z)$
and $\rho^{[s-1]}, \xi^{[s-1]}, \ol{R}^{[s-1]}(z), \hat{\CR}^{[s-1]}(z)$, respectively, which are related via
\begin{equation}\label{eq:parameter-shifts}
  \xi^{[s-1]} = \kappa_{0}^{-1} \xi^{[s]} \qquad \text{and} \qquad
  (\rho^{[s-1]}, \varepsilon_{i}) = (\rho^{[s]}, \varepsilon_{i}) \quad \text{for all} \quad 1 < i < 1' \,,
\end{equation}
where we used~\eqref{eq:weyl-vector} for the second identity.

\begin{Thm}\label{thm:rank-reduction}
Assume $s \geq 2$.

\smallskip
\noindent
(a) In $U(\ol{R}^{[s]}(z))$, the elements $s^{\pm}_{ij}[0]$ and matrices $S^{\pm}(u)$ satisfy the defining relations of
$U(\ol{R}^{[s-1]}(z))$:
\begin{gather*}
  s^{+}_{ij}[0] = s^{-}_{ji}[0] = 0 \quad (1 < j < i < 1') \,, \qquad s^{\pm}_{ii}[0] s^{\mp}_{ii}[0] = 1 \quad (1 < i < 1') \,,
    \\
  \ol{R}^{[s-1]}_{12}(u/v) S^{\pm}_{1}(u) S^{\pm}_{2}(v) = S^{\pm}_{2}(v) S^{\pm}_{1}(u) \ol{R}^{[s-1]}_{12}(u/v) \,,
    \\
  \ol{R}^{[s-1]}_{12}(q^{c}u/v) S^{+}_{1}(u) S^{-}_{2}(v) = S^{-}_{2}(v) S^{+}_{1}(u) \ol{R}^{[s-1]}_{12}(q^{-c}u/v) \,.
\end{gather*}
Therefore, the assignment
\begin{equation*}
  L^{\pm\,[s-1]}(u) \mapsto S^{\pm}(u) \,, \qquad  q^{\pm c} \mapsto q^{\pm c}
\end{equation*}
gives rise to a superalgebra homomorphism $U(\ol{R}^{[s-1]}(z)) \to U(\ol{R}^{[s]}(z))$.

\smallskip
\noindent
(b) Equivalently, in $\wt{U}(\ol{R}^{[s]}(z))$, the elements $\wt{s}^{\pm}_{ij}[0]$ and matrices $\wt{S}^{\pm}(u)$ satisfy
the defining relations of $\wt{U}(\ol{R}^{[s-1]}(z))$:
\begin{gather}
  \wt{s}^{+}_{ij}[0] = \wt{s}^{-}_{ji}[0] = 0 \quad (1 < j < i < 1') \,, \qquad
    \wt{s}^{\pm}_{ii}[0] \wt{s}^{\mp}_{ii}[0] = 1 \quad (1 < i < 1') \,, \label{eq:S-RLL-0} \\
  \ol{R}^{[s-1]}_{12}(u/v) \wt{S}^{\pm}_{1}(u) \wt{S}^{\pm}_{2}(v) =
    \wt{S}^{\pm}_{2}(v) \wt{S}^{\pm}_{1}(u) \ol{R}^{[s-1]}_{12}(u/v) \,, \label{eq:S-RLL-1} \\
  \ol{R}^{[s-1]}_{12}(q^{c}u/v) \wt{S}^{+}_{1}(u) \wt{S}^{-}_{2}(v) =
    \wt{S}^{-}_{2}(v) \wt{S}^{+}_{1}(u) \ol{R}^{[s-1]}_{12}(q^{-c}u/v) \,, \label{eq:S-RLL-2}
\end{gather}
and the assignment
\begin{equation*}
  L^{\pm\,[s-1]}(u) \mapsto \wt{S}^{\pm}(u) \,, \qquad  q^{\pm c} \mapsto q^{\pm c}
\end{equation*}
gives rise to a superalgebra homomorphism $\wt{U}(\ol{R}^{[s-1]}(z)) \to \wt{U}(\ol{R}^{[s]}(z))$.
\end{Thm}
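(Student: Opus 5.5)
By Proposition~\ref{prop:affine-twisted-rtt} and Remarks~\ref{rem:twist-functoriality} and~\ref{rem:quasidet-twist}, part (a) follows from part (b) by applying the $2$-cocycle twist $\zeta$. So I will work only in $\wt{U}(\ol{R}^{[s]}(z))$. The zero-mode relations~\eqref{eq:S-RLL-0} are already~\eqref{eq:s-zero-mode}, and the central elements are untouched. It therefore remains to prove~\eqref{eq:S-RLL-1} and~\eqref{eq:S-RLL-2}. I will work with the braided rescaled form~\eqref{eq:rescaled-braided-RLL-1}--\eqref{eq:rescaled-braided-RLL-2}, writing the target relations for $\wt{S}^\pm$ in terms of $\hat{\CR}^{[s-1]}$.

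The strategy is the fusion argument of~\cite{jlm2,jlm3}, adapted by keeping track of limits $\kappa\to\kappa_0$. Work in four tensor factors $V^{\otimes 4}$, with legs $1,2$ carrying spectral parameters $\kappa u,u$ and legs $3,4$ carrying $\kappa v,v$. Iterating~\eqref{eq:rescaled-braided-RLL-1} with the braid relation~\eqref{eq:braid-rel} gives the fused relation
\[
\hat{\CR}_{23}\hat{\CR}_{12}\hat{\CR}_{34}\hat{\CR}_{23}\cdot \hat{\CR}_{12}(\kappa)\hat{\CR}_{34}(\kappa)\, L_1(\kappa u)L_2(u)L_3(\kappa v)L_4(v)
= L_1(\kappa v)L_2(v)L_3(\kappa u)L_4(u)\cdot(\cdots)\,\hat{\CR}_{12}(\kappa)\hat{\CR}_{34}(\kappa).
\]
Here the four middle factors have arguments $\kappa u/v,\ u/v,\ \kappa^{2}u/v,\ \kappa u/v$ (suitably ordered). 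I expect this to be the $10$-term equality~\eqref{eq:fused-RLL-check}.

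Next I take matrix elements $\langle 1,i,1,k|\cdots|1,j,1,l\rangle$, using~\eqref{eq:twisted-4-matrix-element}, and let $\kappa\to\kappa_0$. By~\eqref{eq:quasidet-1i1j}--\eqref{eq:quasidet-i1j1} the bracket $\hat{\CR}_{12}(\kappa)L_1(\kappa u)L_2(u)$, restricted to the $\{1,i\}$-sectors, becomes a scalar multiple of $\wt{\gamma}^\pm_{ij}(u)$. By~\eqref{eq:quasidet-11-lim} and~\eqref{eq:R-hat-kappa0-prop}, the limit of $\langle 1,i|\hat{\CR}(\kappa)$ only sees the span of $\langle 1,i|,\langle i,1|$. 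Hence the row covector $\lim\langle 1,i|\hat{\CR}(\kappa)$ projects onto a rank-one ``fused'' subspace. The key computation is then to show that the four-factor product of $\hat{\CR}^{[s]}$'s, sandwiched between these fused vectors, equals a scalar times $\hat{\CR}^{[s-1]}_{ik,jl}(u/v)$, with no contribution leaking into indices $1$ or $1'$. This should produce
\[
\hat{\CR}^{[s-1]}(u/v)\,\wt{\Gamma}_1(u)\wt{\Gamma}_2(v)=\wt{\Gamma}_1(v)\wt{\Gamma}_2(u)\,\hat{\CR}^{[s-1]}(u/v)
\]
up to a scalar identical on both sides. I would verify this using~\eqref{eq:parameter-shifts}: the shift $\xi^{[s-1]}=\kappa_0^{-1}\xi^{[s]}$ is precisely what makes the $E_{i'j}\otimes E_{ij'}$ coefficients of $\hat{\CR}^{[s]}$ at shifted arguments recombine into those of $\hat{\CR}^{[s-1]}$.

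Finally I strip off the factors $l_{11}(\kappa_0 u)$ from $\wt{\gamma}$. This needs the commutation of $l_{11}(\kappa_0u)$ with $\wt{s}_{kl}(v)$, which I would obtain from the same fused equation with indices $\langle 1,1,1,i|$, together with the $\kappa\to\kappa_0$ specializations of~\eqref{eq:U(R(Z))-rel-1} of the type leading to~\eqref{eq:l11-li1-comm}. The mixed relation~\eqref{eq:S-RLL-2} follows identically, using~\eqref{eq:rescaled-braided-RLL-2}; there the scalar prefactors $\phi(q^{\pm c}u/v)^{-1}$ must be tracked, and they should cancel against those of the rank-$(s-1)$ relation.

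The main obstacle is the resonance issue. At special $u/v$ (and at $\kappa_0^{2}$-shifts equal to $\xi$), the intermediate $\hat{\CR}$ factors may acquire poles when $\kappa\to\kappa_0$. My first attempt would be to multiply through by $(z-1)(z-\xi)$ and compare residues, showing that the singular terms cancel on both sides of the matrix element. Failing that, I would treat $u/v$ as a formal variable and take the limit in $\kappa$ only, which is legitimate since $\kappa_0\neq1$.
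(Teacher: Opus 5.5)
You have the paper's overall architecture right: reduce (a) to (b) by the cocycle twist, take \eqref{eq:S-RLL-0} from \eqref{eq:s-zero-mode}, pair the fused four-strand relation with matrix elements that put one index $1$ in each fused pair, let $\kappa\to\kappa_0$ to produce $\wt{\gamma}^{\pm}$ and $\hat{\CR}^{[s-1]}$ via $\xi^{[s-1]}=\kappa_0^{-1}\xi^{[s]}$, then strip off $l^{\pm}_{11}$. The genuine gap is the resonance, which you misdiagnose.

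The factors that depend on $z=u/v$ are harmless. For formal $z$ they are regular in $\kappa$ near $\kappa_0$, so clearing $(z-1)(z-\xi)$ or treating $z$ as formal does not touch the real difficulty. The singularity sits in the $z$-independent fusion factors $\hat{\CR}_{12}(\kappa)$ and $\hat{\CR}_{34}(\kappa)$. $\hat{\CR}(\kappa)$ has simple poles at $\kappa=1$ and $\kappa=\xi$, and $\kappa_0\neq 1$ only excludes the first. The equality $\kappa_0=q^{-2(\varepsilon_1,\varepsilon_1)}=\xi$ does occur: exactly when $m-n=4$ with $\ol{1}=\bbar{0}$, or $m=n$ with $\ol{1}=\bbar{1}$.

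In that case your claim that nothing leaks into the indices $1,1'$ fails term by term. The middle factors produce covectors in which one fused pair is $\langle 1,1|$ and the other has the form $\langle r',r|$. In the paper's pairing $\langle k,1,1,l|\cdots|i,1,1,j\rangle$ with $k=l'$, these are $\langle l',l,1,1|$, $\langle 1,1,l',l|$, $\langle 1,1,1',1|$ and $\langle 1,1',1,1|$. On such terms $\hat{\CR}(\kappa)$ has a simple zero on $\langle 1,1|$ by \eqref{eq:quasidet-11} and a simple pole on $\langle r',r|$. They therefore have finite, nonzero limits that are not of quasideterminant form.

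The missing step is to isolate these boundary terms and prove that they cancel, and likewise on the other side. The tools are the explicit residue $\lim_{\kappa\to\kappa_0}(\kappa-\kappa_0)\langle r',r|\hat{\CR}(\kappa)$ and the identity $\lim_{\kappa\to\kappa_0}(\kappa-\kappa_0)^{-1}\langle 1,1|\hat{\CR}(\kappa)=-\frac{1}{(q-q^{-1})\kappa_0}\langle 1,1|$. Without this, your argument covers only $\kappa_0\neq\xi$, which is the classical case. The resonant case is the point of the theorem.

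Two further steps need repair.

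\emph{The fused relation.} Your displayed identity is not \eqref{eq:fused-RLL-check}. The middle arguments are $\kappa z, z, z, \kappa^{-1}z$, with no $\kappa^{2}$. The right-hand side carries $L_1(v)L_2(\kappa v)L_3(u)L_4(\kappa u)$. You must also use the second decomposition of the half-twist, so that $\hat{\CR}_{12}(\kappa)\hat{\CR}_{34}(\kappa)$ stands immediately to the right of the $L$'s. With those factors at the far right, the quasideterminant limits cannot be applied on that side.

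\emph{Stripping $l_{11}$.} Using the four-strand relation with $\langle 1,1,1,i|$, as you describe it, degenerates. The middle factors preserve the number of indices equal to $1$, since no $1'$ is created from $\langle 1,1|$, $\langle 1,i|$ or $\langle i,1|$. So three $1$'s on four legs always leave a $\langle 1,1|$ pair at a fusion factor, that side tends to $0$, and no exchange relation between $l_{11}(\kappa_0u)$ and $\wt{s}(v)$ results. The paper instead uses the three-strand identities \eqref{eq:aux-check-1} and \eqref{eq:aux-check-2}, where $l^{\mp}_{11}$ sits on a single unfused strand. It combines them with the $E_{11}\otimes E_{11}$ component of \eqref{eq:rescaled-braided-RLL-2}.
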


The proof of this main result is presented in Section~\ref{sec:proof-rank-reduction}, where it crucially relies on the
following $10$-term matrix identity (in the opposite-sign case):
\begin{equation}\label{eq:fused-RLL-check}
\begin{split}
  & \check{R}_{23}(q^{c}\kappa^{-1} z) \check{R}_{34}(q^{c}z) \check{R}_{12}(q^{c}z) \check{R}_{23}(q^{c}\kappa z)
    \check{R}_{12}(\kappa) \check{R}_{34}(\kappa) L^{+}_{1}(\kappa u)L^{+}_{2}(u) L^{-}_{3}(\kappa v)L^{-}_{4}(v) \\
  &\!\!= L^{-}_{1}(v)L^{-}_{2}(\kappa v) L^{+}_{3}(u)L^{+}_{4}(\kappa u) \check{R}_{12}(\kappa) \check{R}_{34}(\kappa)
    \check{R}_{23}(q^{-c}\kappa z) \check{R}_{34}(q^{-c}z) \check{R}_{12}(q^{-c}z) \check{R}_{23}(q^{-c}\kappa^{-1} z)
\end{split}
\end{equation}
where $z = u/v$. Being of major importance, we shall now prove~\eqref{eq:fused-RLL-check} through a graphical calculus that
also exhibits the fusion procedure underlying it.

We represent matrix products by string diagrams read from bottom to top, where moving upwards from one vertical level to the next
corresponds to multiplying from right to left in the algebraic expression. Each vertical strand (directed upwards) represents a copy
of $V_{q}$ and carries a spectral parameter (such as $u, v, w, \dots$). At any vertical level, the strands are numbered $1, 2, \ldots$
from left to right, matching the leg-numbering notation:
\begin{itemize}[leftmargin=0.6cm]

\item
A hollow ($\circ$) or filled ($\bullet$) bead on the $a$-th strand (with spectral parameter $u$) represents $L^{+}_{a}(u)$ or
$L^{-}_{a}(u)$, respectively, while a horizontal line directed from right to left through a row of beads records the multiplication order
$L^{\nu_{1}}_{1}(u_{1}) L^{\nu_{2}}_{2}(u_{2}) \cdots$ in $\wt{U}(\ol{R}(z))$ (so that, just as in the vertical
direction, moving along the arrow corresponds to multiplying from right to left);

\item
A crossing of the $a$-th and $(a+1)$-th strands (with the right strand passing over the left) represents $\check{R}_{a,a+1}$
acting on the $a$-th and $(a+1)$-th tensor factors of $V_{q}^{\otimes r}$, with each spectral parameter carried along its strand
through the crossing (whereas the opposite crossing, with the left strand passing over the right, would rather represent
$\check{R}_{a,a+1}^{-1}$):
\begin{itemize}[leftmargin=0.5cm]

\item[--]
in view of~\eqref{eq:braided-RLL-2}, we only consider diagrams in which strands carrying filled beads ($\bullet$) pass over
strands carrying hollow beads ($\circ$);

\item[--]
if the two strands carry beads of the same color ($\circ, \circ$ or $\bullet, \bullet$) or no beads, the central element $q^{\pm c}$
does not appear, and the crossing (resp.\ opposite crossing) with strand parameters $u, v$ represents $\check{R}_{a,a+1}(u/v)$
(resp.\ $\check{R}_{a,a+1}(v/u)^{-1}$);

\item[--]
if the two strands carry beads of opposite colors ($\circ, \bullet$ or $\bullet, \circ$), the spectral parameter ratio in
$\check{R}_{a,a+1}$ or $\check{R}_{a,a+1}^{-1}$ is multiplied by $q^{c}$ when the crossing lies above the horizontal line,
and by $q^{-c}$ when the crossing lies below the horizontal line.

\end{itemize}

\end{itemize}
In this language,~\eqref{eq:braided-RLL-1}--\eqref{eq:braided-RLL-2} amount to sliding the horizontal line past a crossing, while
the braid relation~\eqref{eq:braid-rel} is the type III Reidemeister move, see Figure~\ref{fig:local-moves} (whereas the same-sign
relation \eqref{eq:braided-RLL-1} corresponds to the left diagram with both beads of the same color and $q^{\pm c}$ replaced by $1$):

\begin{figure}[H]
\centering
\begin{tikzpicture}[baseline={(0,1.25)}, scale=0.85, every node/.style={font=\small}]
  \begin{scope}[shift={(0,0)}]
    \draw[thick] (0,0) node[below] {$u$} -- (0,1.0);
    \draw[thick] (1.3,0) node[below] {$v$} -- (1.3,1.0);
    \draw[thick, ->] (0,1.0) -- (1.3,2.2) -- (1.3,2.7) node[above] {$u$};
    \draw[thick, preaction={-, draw=white, line width=5pt}, ->] (1.3,1.0) -- (0,2.2) -- (0,2.7) node[above] {$v$};
    \draw[thick, ->] (1.9,0.5) -- (-0.6,0.5);
    \filldraw[fill=white, thick] (0,0.5) circle (2.8pt) node[above left=1pt] {$L^{+}(u)$};
    \filldraw[fill=black, thick] (1.3,0.5) circle (2.8pt) node[above right=1pt] {$L^{-}(v)$};
    \node[left=4pt] at (0.3,1.6) {$\check{R}(q^{c}u/v)$};

    \node at (3.2,1.35) {$=$};

    \begin{scope}[shift={(4.4,0)}]
      \draw[thick, ->] (0,0) node[below] {$u$} -- (0,0.4) -- (1.3,1.6) -- (1.3,2.7) node[above] {$u$};
      \draw[thick, preaction={-, draw=white, line width=5pt}, ->] (1.3,0) node[below] {$v$} -- (1.3,0.4) -- (0,1.6) -- (0,2.7) node[above] {$v$};
      \draw[thick, ->] (1.9,2.1) -- (-0.6,2.1);
      \filldraw[fill=black, thick] (0,2.1) circle (2.8pt) node[above left=1pt] {$L^{-}(v)$};
      \filldraw[fill=white, thick] (1.3,2.1) circle (2.8pt) node[above right=1pt] {$L^{+}(u)$};
      \node[right=4pt] at (1.0,1.0) {$\check{R}(q^{-c}u/v)$};
    \end{scope}
  \end{scope}

  \begin{scope}[shift={(11,0)}]
    \draw[thick, ->] (0,0) node[below] {$u$} -- (0.9,0.85) -- (1.8,1.7) -- (1.8,2.7) node[above] {$u$};
    \draw[thick, preaction={draw=white, line width=5pt}, ->] (0.9,0) node[below] {$v$} -- (0,0.85) -- (0,1.7) -- (0.9,2.55) -- (0.9,2.7) node[above] {$v$};
    \draw[thick, preaction={draw=white, line width=5pt}, ->] (1.8,0) node[below] {$w$} -- (1.8,0.85) -- (0.9,1.7) -- (0,2.55) -- (0,2.7) node[above] {$w$};

    \node at (2.5,1.35) {$=$};

    \begin{scope}[shift={(3.2,0)}]
      \draw[thick, ->] (0,0) node[below] {$u$} -- (0,0.85) -- (0.9,1.7) -- (1.8,2.55) -- (1.8,2.7) node[above] {$u$};
      \draw[thick, preaction={draw=white, line width=5pt}, ->] (0.9,0) node[below] {$v$} -- (1.8,0.85) -- (1.8,1.7) -- (0.9,2.55) -- (0.9,2.7) node[above] {$v$};
      \draw[thick, preaction={draw=white, line width=5pt}, ->] (1.8,0) node[below] {$w$} -- (0.9,0.85) -- (0,1.7) -- (0,2.7) node[above] {$w$};
    \end{scope}
  \end{scope}
\end{tikzpicture}
\caption{Graphical representation of~\eqref{eq:braided-RLL-2} (left) and~\eqref{eq:braid-rel} (right).}
\label{fig:local-moves}
\end{figure}

Consider now the $4$-strand half-twist, i.e.\ the braid that reverses the order of the four strands by passing each strand
over all strands to its left. One can verify by repeated applications of~\eqref{eq:braid-rel} that its two decompositions
$\check{R}_{23}\check{R}_{34}\check{R}_{12}\check{R}_{23}\check{R}_{12}\check{R}_{34}$ and
$\check{R}_{12}\check{R}_{34}\check{R}_{23}\check{R}_{34}\check{R}_{12}\check{R}_{23}$ coincide,
see Figure~\ref{fig:half-twist-decomp}:

\begin{figure}[H]
\centering
\begin{tikzpicture}[baseline={(0,1.6)}, scale=0.82, every node/.style={font=\small}]
  \begin{scope}[shift={(0,0)}]
    \draw[thick, ->] (0,0) node[below] {$u_{1}$} -- (3,3.2) node[above] {$u_{1}$};
    \fill[white] (1.5,1.6) circle (13pt);
    \draw[thick, ->] (1,0) node[below] {$u_{2}$} -- (2,3.2) node[above] {$u_{2}$};
    \fill[white] (1.5,1.6) circle (9pt);
    \draw[thick, ->] (2,0) node[below] {$u_{3}$} -- (1,3.2) node[above] {$u_{3}$};
    \fill[white] (1.5,1.6) circle (5pt);
    \draw[thick, preaction={draw=white, line width=4.5pt}, ->] (3,0) node[below] {$u_{4}$} -- (0,3.2) node[above] {$u_{4}$};
  \end{scope}

  \node at (4.2,1.6) {$\leadsto$};

  \begin{scope}[shift={(5.3,0)}]
    \draw[thick, ->] (0,0) node[below] {$u_{1}$} -- (0,0.2) -- (1,1.0) -- (3,2.6) -- (3,3.2) node[above] {$u_{1}$};
    \draw[thick, preaction={draw=white, line width=5pt}, ->] (1,0) node[below] {$u_{2}$} -- (1,0.2) -- (0,1.0) -- (2,2.6) -- (2,3.2) node[above] {$u_{2}$};
    \draw[thick, preaction={draw=white, line width=5pt}, ->] (2,0) node[below] {$u_{3}$} -- (2,0.2) -- (3,1.0) -- (1,2.6) -- (1,3.2) node[above] {$u_{3}$};
    \draw[thick, preaction={draw=white, line width=5pt}, ->] (3,0) node[below] {$u_{4}$} -- (3,0.2) -- (2,1.0) -- (0,2.6) -- (0,3.2) node[above] {$u_{4}$};
  \end{scope}

  \node at (9.5,1.6) {$=$};

  \begin{scope}[shift={(10.6,0)}]
    \draw[thick, ->] (0,0) node[below] {$u_{1}$} -- (0,0.2) -- (2,1.8) -- (3,2.6) -- (3,3.2) node[above] {$u_{1}$};
    \draw[thick, preaction={draw=white, line width=5pt}, ->] (1,0) node[below] {$u_{2}$} -- (1,0.2) -- (3,1.8) -- (2,2.6) -- (2,3.2) node[above] {$u_{2}$};
    \draw[thick, preaction={draw=white, line width=5pt}, ->] (2,0) node[below] {$u_{3}$} -- (2,0.2) -- (0,1.8) -- (1,2.6) -- (1,3.2) node[above] {$u_{3}$};
    \draw[thick, preaction={draw=white, line width=5pt}, ->] (3,0) node[below] {$u_{4}$} -- (3,0.2) -- (1,1.8) -- (0,2.6) -- (0,3.2) node[above] {$u_{4}$};
  \end{scope}
\end{tikzpicture}
\caption{The $4$-strand half-twist and its two decompositions.}
\label{fig:half-twist-decomp}
\end{figure}

Sliding $L^{+}_{1}(\kappa u)L^{+}_{2}(u)L^{-}_{3}(\kappa v)L^{-}_{4}(v)$ from the bottom to the top of the $4$-strand half-twist
via~\eqref{eq:braided-RLL-1}--\eqref{eq:braided-RLL-2} and applying the above identity (represented by
Figure~\ref{fig:half-twist-decomp}) yields~\eqref{eq:fused-RLL-check}, see Figure~\ref{fig:fused-RLL-derivation}:

\begin{figure}[H]
\centering
\begin{tikzpicture}[baseline={(0,1.9)}, scale=0.82, every node/.style={font=\small}]
  \begin{scope}[shift={(0,0)}]
    \draw[thick, ->] (0,0) node[below] {$\kappa u$} -- (0,0.8) -- (1,1.6) -- (3,3.2) -- (3,3.8) node[above] {$\kappa u$};
    \draw[thick, preaction={draw=white, line width=5pt}, ->] (1,0) node[below] {$u$} -- (1,0.8) -- (0,1.6) -- (2,3.2) -- (2,3.8) node[above] {$u$};
    \draw[thick, preaction={draw=white, line width=5pt}, ->] (2,0) node[below] {$\kappa v$} -- (2,0.8) -- (3,1.6) -- (1,3.2) -- (1,3.8) node[above] {$\kappa v$};
    \draw[thick, preaction={draw=white, line width=5pt}, ->] (3,0) node[below] {$v$} -- (3,0.8) -- (2,1.6) -- (0,3.2) -- (0,3.8) node[above] {$v$};

    \draw[thick, ->] (3.5,0.4) -- (-0.5,0.4);
    \filldraw[fill=white, thick] (0,0.4) circle (2.8pt);
    \filldraw[fill=white, thick] (1,0.4) circle (2.8pt);
    \filldraw[fill=black, thick] (2,0.4) circle (2.8pt);
    \filldraw[fill=black, thick] (3,0.4) circle (2.8pt);
  \end{scope}

  \node at (4.2,1.9) {$=$};

  \begin{scope}[shift={(5.3,0)}]
    \draw[thick, ->] (0,0) node[below] {$\kappa u$} -- (0,0.3) -- (1,1.1) -- (3,2.7) -- (3,3.8) node[above] {$\kappa u$};
    \draw[thick, preaction={draw=white, line width=5pt}, ->] (1,0) node[below] {$u$} -- (1,0.3) -- (0,1.1) -- (2,2.7) -- (2,3.8) node[above] {$u$};
    \draw[thick, preaction={draw=white, line width=5pt}, ->] (2,0) node[below] {$\kappa v$} -- (2,0.3) -- (3,1.1) -- (1,2.7) -- (1,3.8) node[above] {$\kappa v$};
    \draw[thick, preaction={draw=white, line width=5pt}, ->] (3,0) node[below] {$v$} -- (3,0.3) -- (2,1.1) -- (0,2.7) -- (0,3.8) node[above] {$v$};

    \draw[thick, ->] (3.5,3.3) -- (-0.5,3.3);
    \filldraw[fill=black, thick] (0,3.3) circle (2.8pt);
    \filldraw[fill=black, thick] (1,3.3) circle (2.8pt);
    \filldraw[fill=white, thick] (2,3.3) circle (2.8pt);
    \filldraw[fill=white, thick] (3,3.3) circle (2.8pt);
  \end{scope}

  \node at (9.5,1.9) {$=$};

  \begin{scope}[shift={(10.6,0)}]
    \draw[thick, ->] (0,0) node[below] {$\kappa u$} -- (0,0.3) -- (2,1.9) -- (3,2.7) -- (3,3.8) node[above] {$\kappa u$};
    \draw[thick, preaction={draw=white, line width=5pt}, ->] (1,0) node[below] {$u$} -- (1,0.3) -- (3,1.9) -- (2,2.7) -- (2,3.8) node[above] {$u$};
    \draw[thick, preaction={draw=white, line width=5pt}, ->] (2,0) node[below] {$\kappa v$} -- (2,0.3) -- (0,1.9) -- (1,2.7) -- (1,3.8) node[above] {$\kappa v$};
    \draw[thick, preaction={draw=white, line width=5pt}, ->] (3,0) node[below] {$v$} -- (3,0.3) -- (1,1.9) -- (0,2.7) -- (0,3.8) node[above] {$v$};

    \draw[thick, ->] (3.5,3.3) -- (-0.5,3.3);
    \filldraw[fill=black, thick] (0,3.3) circle (2.8pt);
    \filldraw[fill=black, thick] (1,3.3) circle (2.8pt);
    \filldraw[fill=white, thick] (2,3.3) circle (2.8pt);
    \filldraw[fill=white, thick] (3,3.3) circle (2.8pt);
  \end{scope}
\end{tikzpicture}
\caption{Graphical derivation of~\eqref{eq:fused-RLL-check}.}
\label{fig:fused-RLL-derivation}
\end{figure}

\begin{Rem}\label{rem:fusion-interpretation}
We note that grouping the four strands into the two pairs $(1,2)$ and $(3,4)$ reveals an important \emph{fusion structure}
for both sides of~\eqref{eq:fused-RLL-check}, see Figure~\ref{fig:fusion-decomp}:

\begin{figure}[H]
\centering
\begin{tikzpicture}[baseline={(0,2.2)}, scale=0.82, every node/.style={font=\footnotesize}]
  \begin{scope}[shift={(0,0)}]
    \draw[thick] (0,0) node[below] {$\kappa u$} -- (0,1.0);
    \draw[thick] (1.2,0) node[below] {$u$} -- (1.2,1.0);
    \draw[thick] (2.8,0) node[below] {$\kappa v$} -- (2.8,1.0);
    \draw[thick] (4.0,0) node[below] {$v$} -- (4.0,1.0);

    \draw[thick, ->] (4.6,0.45) -- (-0.6,0.45);
    \filldraw[fill=white, thick] (0,0.45) circle (2.8pt);
    \filldraw[fill=white, thick] (1.2,0.45) circle (2.8pt);
    \filldraw[fill=black, thick] (2.8,0.45) circle (2.8pt);
    \filldraw[fill=black, thick] (4.0,0.45) circle (2.8pt);

    \draw[thick] (0,1.0) -- (1.2,2.0);
    \draw[thick, preaction={draw=white, line width=5pt}] (1.2,1.0) -- (0,2.0);

    \draw[thick] (2.8,1.0) -- (4.0,2.0);
    \draw[thick, preaction={draw=white, line width=5pt}] (4.0,1.0) -- (2.8,2.0);

    \foreach \x in {0, 1.2, 2.8, 4.0} {
      \draw[thick] (\x,2.0) -- (\x,2.3);
    }

    \draw[thick, ->] (0,2.3) -- (2.8,4.3) node[above] {$u$};
    \draw[thick, ->] (1.2,2.3) -- (4.0,4.3) node[above] {$\kappa u$};
    \draw[thick, preaction={draw=white, line width=5pt}, ->] (2.8,2.3) -- (0,4.3) node[above] {$v$};
    \draw[thick, preaction={draw=white, line width=5pt}, ->] (4.0,2.3) -- (1.2,4.3) node[above] {$\kappa v$};

    \draw[BrickRed, thick, rounded corners=2pt] (-0.3,0.15) rectangle (1.5,2.08);
    \node[BrickRed] at (-0.1,-1.05) {$\check{R}_{12}(\kappa)L^{+}_{1}(\kappa u)L^{+}_{2}(u)$};
    \draw[BrickRed, ->] (0.2,-0.75) -- (0.5,0.1);

    \draw[BrickRed, thick, rounded corners=2pt] (2.5,0.15) rectangle (4.3,2.08);
    \node[BrickRed] at (3.9,-1.05) {$\check{R}_{34}(\kappa)L^{-}_{3}(\kappa v)L^{-}_{4}(v)$};
    \draw[BrickRed, ->] (3.6,-0.75) -- (3.4,0.1);

    \draw[BrickRed, thick, rounded corners=2pt] (-0.3,2.22) rectangle (4.3,4.15);
    \node[BrickRed] at (2.0,5.3) {$\check{R}_{23}(q^{c}\kappa^{-1}z)\check{R}_{34}(q^{c}z)\check{R}_{12}(q^{c}z)\check{R}_{23}(q^{c}\kappa z)$};
    \draw[BrickRed, ->] (2.0,5.0) -- (2.0,4.2);
  \end{scope}

  \node[font=\small] at (6.2,2.15) {$=$};

  \begin{scope}[shift={(8.4,0)}]
    \draw[thick] (0,0) node[below] {$\kappa u$} -- (0,0.15) -- (2.8,2.0);
    \draw[thick] (1.2,0) node[below] {$u$} -- (1.2,0.15) -- (4.0,2.0);
    \draw[thick, preaction={draw=white, line width=5pt}] (2.8,0) node[below] {$\kappa v$} -- (2.8,0.15) -- (0,2.0);
    \draw[thick, preaction={draw=white, line width=5pt}] (4.0,0) node[below] {$v$} -- (4.0,0.15) -- (1.2,2.0);

    \foreach \x in {0, 1.2, 2.8, 4.0} {
      \draw[thick] (\x,2.0) -- (\x,2.3);
    }

    \draw[thick, ->] (0,2.3) -- (1.2,3.3) -- (1.2,4.3) node[above] {$\kappa v$};
    \draw[thick, preaction={draw=white, line width=5pt}, ->] (1.2,2.3) -- (0,3.3) -- (0,4.3) node[above] {$v$};

    \draw[thick, ->] (2.8,2.3) -- (4.0,3.3) -- (4.0,4.3) node[above] {$\kappa u$};
    \draw[thick, preaction={draw=white, line width=5pt}, ->] (4.0,2.3) -- (2.8,3.3) -- (2.8,4.3) node[above] {$u$};

    \draw[thick, ->] (4.6,3.85) -- (-0.6,3.85);
    \filldraw[fill=black, thick] (0,3.85) circle (2.8pt);
    \filldraw[fill=black, thick] (1.2,3.85) circle (2.8pt);
    \filldraw[fill=white, thick] (2.8,3.85) circle (2.8pt);
    \filldraw[fill=white, thick] (4.0,3.85) circle (2.8pt);

    \draw[BrickRed, thick, rounded corners=2pt] (-0.3,0.15) rectangle (4.3,2.08);
    \node[BrickRed] at (2.0,-1.05) {$\check{R}_{23}(q^{-c}\kappa z)\check{R}_{34}(q^{-c}z)\check{R}_{12}(q^{-c}z)\check{R}_{23}(q^{-c}\kappa^{-1}z)$};
    \draw[BrickRed, ->] (2.0,-0.75) -- (2.0,0.1);

    \draw[BrickRed, thick, rounded corners=2pt] (-0.3,2.22) rectangle (1.5,4.15);
    \node[BrickRed] at (-0.1,5.3) {$L^{-}_{1}(v)L^{-}_{2}(\kappa v)\check{R}_{12}(\kappa)$};
    \draw[BrickRed, ->] (0.2,5.0) -- (0.5,4.2);

    \draw[BrickRed, thick, rounded corners=2pt] (2.5,2.22) rectangle (4.3,4.15);
    \node[BrickRed] at (3.9,5.3) {$L^{+}_{3}(u)L^{+}_{4}(\kappa u)\check{R}_{34}(\kappa)$};
    \draw[BrickRed, ->] (3.6,5.0) -- (3.4,4.2);
  \end{scope}
\end{tikzpicture}
\caption{Fusion decomposition of~\eqref{eq:fused-RLL-check}.}
\label{fig:fusion-decomp}
\end{figure}

\noindent
According to~\eqref{eq:braided-RLL-1}, we have
\begin{equation*}
\begin{split}
  \check{R}_{12}(\kappa)L^{+}_{1}(\kappa u)L^{+}_{2}(u) &= L^{+}_{1}(u)L^{+}_{2}(\kappa u)\check{R}_{12}(\kappa) \,, \\
  \check{R}_{12}(\kappa)L^{-}_{1}(\kappa v)L^{-}_{2}(v) &= L^{-}_{1}(v)L^{-}_{2}(\kappa v)\check{R}_{12}(\kappa) \,,
\end{split}
\end{equation*}
while the blocks
  $\check{R}_{23}(q^{\pm c}\kappa^{\mp 1}z)\check{R}_{34}(q^{\pm c}z)\check{R}_{12}(q^{\pm c}z)\check{R}_{23}(q^{\pm c}\kappa^{\pm 1}z)$
braid the pair $(1,2)$ past the pair $(3,4)$. Thus,~\eqref{eq:fused-RLL-check} may be thought of as a fused counterpart
of~\eqref{eq:braided-RLL-2} for the pairs $(1,2)$ and $(3,4)$.
\end{Rem}


\medskip

\section{Proof of Theorem~\ref{thm:rank-reduction} and rational counterpart}\label{sec:proof-rank-reduction}

In this section, we prove Theorem~\ref{thm:rank-reduction} as well as derive its rational version (Theorem~\ref{thm:Y-rank-reduction}):
the rank reduction homomorphisms for orthosymplectic Yangians, established earlier in~\cite{ft,m}.

By Proposition~\ref{prop:affine-twisted-rtt} and Remarks~\ref{rem:twist-functoriality}, \ref{rem:quasidet-twist}, parts~(a) and~(b)
of Theorem~\ref{thm:rank-reduction} are equivalent. Thus it suffices to prove part (b). Throughout the rest of the proof, we work
in $\wt{U}(\ol{R}^{[s]}(z))$, fix indices $1 < i,j,k,l < 1'$, and set $z \coloneqq u/v$. We note that~\eqref{eq:S-RLL-0} is already
established in~\eqref{eq:s-zero-mode}.

We shall now establish~\eqref{eq:S-RLL-2}. Rewriting $\check{R}$ in terms of $\hat{\CR}$ via~\eqref{eq:calR-hat-def}
in~\eqref{eq:fused-RLL-check} (and canceling the common $\phi(\kappa)^{-2}$), and pairing both sides with $\langle k,1,1,l|$
and $|i,1,1,j\rangle$, we obtain
\begin{equation}\label{eq:fused-RLL-pm}
\begin{split}
  \phi(q^{c}\kappa z)^{-1} & \phi(q^{c}\kappa^{-1} z)^{-1} \phi(q^{c}z)^{-2} \cdot
    \langle k,1,1,l | \hat{\CR}_{23}(q^{c}\kappa^{-1} z) \hat{\CR}_{34}(q^{c}z) \hat{\CR}_{12}(q^{c}z) \hat{\CR}_{23}(q^{c}\kappa z) \\
  & \times \hat{\CR}_{12}(\kappa) \hat{\CR}_{34}(\kappa)
    L^{+}_{1}(\kappa u)L^{+}_{2}(u) L^{-}_{3}(\kappa v)L^{-}_{4}(v) | i,1,1,j \rangle \\
  = \phi(q^{-c}\kappa z)^{-1} & \phi(q^{-c}\kappa^{-1} z)^{-1} \phi(q^{-c}z)^{-2} \cdot
    \langle k,1,1,l |  L^{-}_{1}(v)L^{-}_{2}(\kappa v) L^{+}_{3}(u)L^{+}_{4}(\kappa u) \\
  &\times \hat{\CR}_{12}(\kappa) \hat{\CR}_{34}(\kappa)
    \hat{\CR}_{23}(q^{-c}\kappa z) \hat{\CR}_{34}(q^{-c}z) \hat{\CR}_{12}(q^{-c}z) \hat{\CR}_{23}(q^{-c}\kappa^{-1} z)
    | i,1,1,j \rangle \,.
\end{split}
\end{equation}
We shall now evaluate both sides of~\eqref{eq:fused-RLL-pm} in the limit $\kappa \to \kappa_{0}$, see~\eqref{eq:kappa-o}.


\subsection{Computation of the left-hand side of~\eqref{eq:fused-RLL-pm}}\label{ssec:proof-LHS}
\

\smallskip
\noindent
\emph{Step 1.}
Applying $\hat{\CR}_{23}(q^{c}\kappa^{-1} z)$ and then $\hat{\CR}_{34}(q^{c}z) \hat{\CR}_{12}(q^{c}z)$ to
$\langle k,1,1,l|$ via~\eqref{eq:calR-hat-explicit}, we obtain
\begin{equation*}
  \langle k,1,1,l| \hat{\CR}_{23}(q^{c}\kappa^{-1} z) = (-1)^{\ol{1}} \cdot
  \frac{q^{(\varepsilon_{1},\varepsilon_{1})}q^{c}\kappa^{-1}z - q^{-(\varepsilon_{1},\varepsilon_{1})}}
       {q^{c}\kappa^{-1}z - 1} \cdot \langle k,1,1,l|
\end{equation*}
and hence
\begin{equation}\label{eq:LHS-step1}
\begin{split}
  \langle k,1,1,l|& \hat{\CR}_{23}(q^{c}\kappa^{-1} z) \hat{\CR}_{34}(q^{c}z) \hat{\CR}_{12}(q^{c}z) \\
  =\, & (-1)^{\ol{1}} \cdot
   \frac{q^{(\varepsilon_{1},\varepsilon_{1})}q^{c}\kappa^{-1}z - q^{-(\varepsilon_{1},\varepsilon_{1})}}{q^{c}\kappa^{-1}z - 1} \\
  &\times \bigg\{
    (-1)^{\ol{1}(\ol{k}+\ol{l})} \langle 1,k,l,1|
    + (-1)^{\ol{1}\,\ol{l}} \frac{q-q^{-1}}{q^{c}z-1} \langle k,1,l,1| \\
  &\hspace{20pt}
    + (-1)^{\ol{1}\,\ol{k}} \frac{(q-q^{-1})q^{c}z}{q^{c}z-1} \langle 1,k,1,l|
    + \frac{(q-q^{-1})^{2}q^{c}z}{(q^{c}z-1)^{2}} \langle k,1,1,l|
  \bigg\} \,.
\end{split}
\end{equation}

\smallskip
\noindent
\emph{Step 2.}
Set
\begin{equation*}
  \sT_{\mathrm{L}}'(\kappa) \coloneqq L^{+}_{1}(\kappa u)L^{+}_{2}(u) L^{-}_{3}(\kappa v)L^{-}_{4}(v) |i,1,1,j \rangle \,, \qquad
  \sT_{\mathrm{L}}(\kappa) \coloneqq \hat{\CR}_{12}(\kappa) \hat{\CR}_{34}(\kappa) \sT_{\mathrm{L}}'(\kappa) \,.
\end{equation*}
Using~\eqref{eq:calR-hat-explicit}, we expand the action of $\hat{\CR}_{23}(q^{c}\kappa z)$ on each of the four covectors
in~\eqref{eq:LHS-step1}:
\begin{align}
  \langle k,1,l,1| \hat{\CR}_{23}(q^{c}\kappa z) \sT_{\mathrm{L}}(\kappa)
  &= \frac{(q-q^{-1})q^{c}\kappa z}{q^{c}\kappa z-1} \langle k,1,l,1| \sT_{\mathrm{L}}(\kappa)
   + (-1)^{\ol{1}\,\ol{l}} \langle k,l,1,1| \sT_{\mathrm{L}}(\kappa) \,, \label{eq:LHS-term-k1l1}\\
  \langle 1,k,1,l| \hat{\CR}_{23}(q^{c}\kappa z) \sT_{\mathrm{L}}(\kappa)
  &= \frac{q-q^{-1}}{q^{c}\kappa z-1} \langle 1,k,1,l| \sT_{\mathrm{L}}(\kappa)
   + (-1)^{\ol{1}\,\ol{k}} \langle 1,1,k,l| \sT_{\mathrm{L}}(\kappa) \,, \label{eq:LHS-term-1k1l}\\
  \langle k,1,1,l| \hat{\CR}_{23}(q^{c}\kappa z) \sT_{\mathrm{L}}(\kappa)
  &= (-1)^{\ol{1}} \frac{q^{(\varepsilon_{1},\varepsilon_{1})} q^{c}\kappa z - q^{-(\varepsilon_{1},\varepsilon_{1})}}
     {q^{c}\kappa z-1} \langle k,1,1,l| \sT_{\mathrm{L}}(\kappa) \,, \label{eq:LHS-term-k11l}
\end{align}
and finally
{\allowdisplaybreaks
\begin{equation}\label{eq:LHS-term-1kl1}
\begin{split}
  \langle 1,k,l,1|& \hat{\CR}_{23}(q^{c}\kappa z) \sT_{\mathrm{L}}(\kappa) \\
  = \Bigg\{ &
    \delta_{k \neq l,l'} (-1)^{\ol{k}\,\ol{l}} \langle 1,l,k,1| + \delta_{k=l} \delta_{k \neq k'} (-1)^{\ol{k}}
    \frac{q^{(\varepsilon_{k},\varepsilon_{k})}q^{c}\kappa z - q^{-(\varepsilon_{k},\varepsilon_{k})}}{q^{c}\kappa z-1} \langle 1,k,k,1| \\
  & + \delta_{k=l'} \delta_{k \neq k'} (-1)^{\ol{k}}
    \frac{q^{-(\varepsilon_{k},\varepsilon_{k})}q^{c}\kappa z - q^{(\varepsilon_{k},\varepsilon_{k})}\xi}
         {q^{c}\kappa z-\xi} \langle 1,k',k,1| \\
  & + \delta_{k=l} \delta_{k=k'} (-1)^{\ol{k}} \bigg( 1 +
    \frac{(q-q^{-1})(1-\xi)q^{c}\kappa z}{(q^{c}\kappa z-1)(q^{c}\kappa z-\xi)} \bigg) \langle 1,k,k,1| \\
  & + \delta_{k<l} \frac{(q-q^{-1})q^{c}\kappa z}{q^{c}\kappa z-1} \langle 1,k,l,1|
    + \delta_{k>l} \frac{q-q^{-1}}{q^{c}\kappa z-1} \langle 1,k,l,1| \\
  & - \delta_{k = l'} \frac{(q-q^{-1})q^{c}\kappa z}{q^{c}\kappa z-\xi} \sum_{1 < b < l} \vartheta_{l}\vartheta_{b}
    q^{(\rho,\varepsilon_{l}-\varepsilon_{b})}q^{-(\varepsilon_{l},\varepsilon_{l})/2}q^{(\varepsilon_{b},\varepsilon_{b})/2}
    \langle 1,b,b',1| \\
  & - \delta_{k = l'} \frac{(q-q^{-1})\xi}{q^{c}\kappa z-\xi}
      \sum_{l < b < 1'} \vartheta_{l}\vartheta_{b} q^{(\rho,\varepsilon_{l}-\varepsilon_{b})}
      q^{-(\varepsilon_{l},\varepsilon_{l})/2}q^{(\varepsilon_{b},\varepsilon_{b})/2} \langle 1,b,b',1| \\
  & - \delta_{k = l'} \frac{(q-q^{-1})q^{c}\kappa z}{q^{c}\kappa z-\xi}
      \vartheta_{l}\vartheta_{1} q^{(\rho,\varepsilon_{l}-\varepsilon_{1})}
      q^{-(\varepsilon_{l},\varepsilon_{l})/2}q^{(\varepsilon_{1},\varepsilon_{1})/2} \langle 1,1,1',1| \\
  & - \delta_{k = l'} \frac{(q-q^{-1})\xi}{q^{c}\kappa z-\xi}
      \vartheta_{l}\vartheta_{1'} q^{(\rho,\varepsilon_{l}-\varepsilon_{1'})}
      q^{-(\varepsilon_{l},\varepsilon_{l})/2}q^{(\varepsilon_{1},\varepsilon_{1})/2} \langle 1,1',1,1| \Bigg\}
      \sT_{\mathrm{L}}(\kappa) \,.
\end{split}
\end{equation}}

\smallskip
\noindent
\emph{Step 3.}
Before taking $\lim_{\kappa \to \kappa_{0}}$, let us collect the four ``boundary terms''
in~\eqref{eq:LHS-term-k1l1}, \eqref{eq:LHS-term-1k1l}, and~\eqref{eq:LHS-term-1kl1} involving $\langle 1,1|$
in either the first two or the last two tensor factors (i.e.\ involving $\langle 1,1,\ast,\ast|$ or $\langle \ast,\ast,1,1|$).
In the linear combination of~\eqref{eq:LHS-term-k1l1}--\eqref{eq:LHS-term-1kl1} inside the braces of~\eqref{eq:LHS-step1}
(i.e.\ before multiplying by the overall prefactor
  $(-1)^{\ol{1}} \frac{q^{(\varepsilon_{1},\varepsilon_{1})}q^{c}\kappa^{-1}z - q^{-(\varepsilon_{1},\varepsilon_{1})}}
   {q^{c}\kappa^{-1}z - 1}$),
the combined contribution of these four terms is given by
\begin{equation*}
\begin{split}
  \Sigma_{\mathrm{L}}(\kappa) &\coloneqq
  \frac{q-q^{-1}}{q^{c}z-1} \langle k,l,1,1| \sT_{\mathrm{L}}(\kappa)
  + \frac{(q-q^{-1})q^{c}z}{q^{c}z-1} \langle 1,1,k,l| \sT_{\mathrm{L}}(\kappa) \\
  &\quad - \delta_{k = l'} \bigg\{
    \frac{(q-q^{-1})q^{c}\kappa z}{q^{c}\kappa z-\xi}
    \vartheta_{l}\vartheta_{1} q^{(\rho,\varepsilon_{l}-\varepsilon_{1})}
    q^{-(\varepsilon_{l},\varepsilon_{l})/2}q^{(\varepsilon_{1},\varepsilon_{1})/2} \langle 1,1,1',1| \\
  &\hspace{55pt}
    + \frac{(q-q^{-1})\xi}{q^{c}\kappa z-\xi}
    \vartheta_{l}\vartheta_{1'} q^{(\rho,\varepsilon_{l}-\varepsilon_{1'})}
    q^{-(\varepsilon_{l},\varepsilon_{l})/2}q^{(\varepsilon_{1},\varepsilon_{1})/2} \langle 1,1',1,1|
  \bigg\} \sT_{\mathrm{L}}(\kappa) \,.
\end{split}
\end{equation*}
Recall that $\sT_{\mathrm{L}}(\kappa) = \hat{\CR}_{12}(\kappa) \hat{\CR}_{34}(\kappa) \sT_{\mathrm{L}}'(\kappa)$, so in each term
of $\Sigma_{\mathrm{L}}(\kappa)$, one of the factors $\hat{\CR}_{12}(\kappa), \hat{\CR}_{34}(\kappa)$ acts on $\langle 1,1|$,
while the other acts on $\langle k,l|$, $\langle 1',1|$, or $\langle 1,1'|$.
We claim that $\lim_{\kappa \to \kappa_{0}} \Sigma_{\mathrm{L}}(\kappa) = 0$:
\begin{enumerate}[label=(\roman*), leftmargin=0.6cm]

\item
If $\kappa_{0} \neq \xi$, then $\hat{\CR}(\kappa)$ is regular at $\kappa = \kappa_{0}$ (since $\kappa_{0} \neq 1$).
Thus, in each term of $\Sigma_{\mathrm{L}}(\kappa)$, the copy of $\hat{\CR}(\kappa)$ acting on $\langle 1,1|$ produces $0$ in
the limit $\kappa \to \kappa_{0}$ by~\eqref{eq:quasidet-11-lim}, while the other copy of $\hat{\CR}(\kappa)$ remains regular,
so $\lim_{\kappa \to \kappa_{0}} \Sigma_{\mathrm{L}}(\kappa) = 0$.

\item
If $\kappa_{0} = \xi$, then $\hat{\CR}(\kappa)$ has a simple pole at $\kappa = \kappa_{0}$. By~\eqref{eq:calR-hat-explicit},
$\langle k,l|\hat{\CR}(\kappa)$ is regular at $\kappa = \kappa_{0}$ for $k \neq l'$ (so the first two terms of
$\Sigma_{\mathrm{L}}(\kappa)$ vanish as $\kappa \to \kappa_{0}$ unless $k = l'$), while on $\langle r',r|$ ($1 \leq r \leq 1'$)
we have
\begin{equation*}
  \lim_{\kappa \to \kappa_{0}} (\kappa - \kappa_{0}) \langle r',r| \hat{\CR}(\kappa)
  = -(q-q^{-1})\kappa_{0} \sum_{1 \leq b \leq 1'} \vartheta_{r}\vartheta_{b} q^{(\rho,\varepsilon_{r}-\varepsilon_{b})}
    q^{-(\varepsilon_{r},\varepsilon_{r})/2}q^{(\varepsilon_{b},\varepsilon_{b})/2} \langle b,b'| \,,
\end{equation*}
where the $b \neq r$ summands come from the last two sums in~\eqref{eq:calR-hat-explicit} and the $b = r$ summand comes from
the $E_{r'r} \otimes E_{rr'}$ if $r\ne r'$ or $E_{rr} \otimes E_{rr}$ if $r = r'$ terms of~\eqref{eq:calR-hat-explicit}.
Combined with~\eqref{eq:quasidet-11}, which gives
\begin{equation*}
    \lim_{\kappa \to \kappa_{0}} (\kappa - \kappa_{0})^{-1} \langle 1,1|\hat{\CR}(\kappa)
    = \frac{(-1)^{\ol{1}}q^{(\varepsilon_{1},\varepsilon_{1})}}{\kappa_{0}-1}\langle 1,1|
    = -\frac{1}{(q-q^{-1})\kappa_{0}}\langle 1,1| \,,
\end{equation*}
we obtain for any $1 \leq r \leq 1'$:
\begin{align*}
    \lim_{\kappa \to \kappa_{0}} \langle r',r,1,1| \hat{\CR}_{12}(\kappa)\hat{\CR}_{34}(\kappa)
    & = \sum_{1 \leq b \leq 1'} \vartheta_{r}\vartheta_{b} q^{(\rho,\varepsilon_{r}-\varepsilon_{b})}
      q^{-(\varepsilon_{r},\varepsilon_{r})/2}q^{(\varepsilon_{b},\varepsilon_{b})/2} \langle b,b',1,1| \,, \\
      \lim_{\kappa \to \kappa_{0}} \langle 1,1,r',r| \hat{\CR}_{12}(\kappa)\hat{\CR}_{34}(\kappa)
    & = \sum_{1 \leq b \leq 1'} \vartheta_{r}\vartheta_{b} q^{(\rho,\varepsilon_{r}-\varepsilon_{b})}
      q^{-(\varepsilon_{r},\varepsilon_{r})/2}q^{(\varepsilon_{b},\varepsilon_{b})/2} \langle 1,1,b,b'| \,.
\end{align*}
Substituting these limits (for $r=l$, $r=1$, and $r=1'$) into $\Sigma_{\mathrm{L}}(\kappa)$ and using $\kappa_{0} = \xi$ gives
  \begin{align*}
    \lim_{\kappa \to \kappa_{0}} \Sigma_{\mathrm{L}}(\kappa)
    &= \delta_{k=l'} \frac{q-q^{-1}}{q^{c}z-1} \sum_{1 \leq b \leq 1'} \vartheta_{l}\vartheta_{b}
      q^{(\rho,\varepsilon_{l}-\varepsilon_{b})}q^{-(\varepsilon_{l},\varepsilon_{l})/2}q^{(\varepsilon_{b},\varepsilon_{b})/2}
      \langle b,b',1,1| \sT_{\mathrm{L}}'(\kappa_{0}) \\
    &\quad + \delta_{k=l'} \frac{(q-q^{-1})q^{c}z}{q^{c}z-1} \sum_{1 \leq b \leq 1'} \vartheta_{l}\vartheta_{b}
      q^{(\rho,\varepsilon_{l}-\varepsilon_{b})}q^{-(\varepsilon_{l},\varepsilon_{l})/2}q^{(\varepsilon_{b},\varepsilon_{b})/2}
      \langle 1,1,b,b'| \sT_{\mathrm{L}}'(\kappa_{0}) \\
    &\quad - \delta_{k=l'} \frac{(q-q^{-1})q^{c}z}{q^{c}z-1} \vartheta_{l}\vartheta_{1} q^{(\rho,\varepsilon_{l}-\varepsilon_{1})}
      q^{-(\varepsilon_{l},\varepsilon_{l})/2}q^{(\varepsilon_{1},\varepsilon_{1})/2} \\
    &\hspace{75pt} \times
      \sum_{1 \leq b \leq 1'} \vartheta_{1}\vartheta_{b} q^{(\rho,\varepsilon_{1}-\varepsilon_{b})}
      q^{-(\varepsilon_{1},\varepsilon_{1})/2}q^{(\varepsilon_{b},\varepsilon_{b})/2}
      \langle 1,1,b,b'| \sT_{\mathrm{L}}'(\kappa_{0}) \\
    &\quad - \delta_{k=l'} \frac{q-q^{-1}}{q^{c}z-1} \vartheta_{l}\vartheta_{1'} q^{(\rho,\varepsilon_{l}-\varepsilon_{1'})}
      q^{-(\varepsilon_{l},\varepsilon_{l})/2}q^{(\varepsilon_{1},\varepsilon_{1})/2} \\
    &\hspace{75pt} \times
      \sum_{1 \leq b \leq 1'} \vartheta_{1'}\vartheta_{b} q^{(\rho,\varepsilon_{1'}-\varepsilon_{b})}
      q^{-(\varepsilon_{1},\varepsilon_{1})/2}q^{(\varepsilon_{b},\varepsilon_{b})/2}
      \langle b,b',1,1| \sT_{\mathrm{L}}'(\kappa_{0}) \\
    &= 0 \,.
  \end{align*}
\end{enumerate}

\smallskip
\noindent
\emph{Step 4.}
For any $1 < a,b,c,d < 1'$, let us set
\begin{equation}\label{eq:chi-sign-def}
  \chi_{ab,cd} \coloneqq
  (-1)^{(\ol{1}+\ol{a})(\ol{1}+\ol{b}) + (\ol{1}+\ol{c})(\ol{1}+\ol{d}) + (\ol{1}+\ol{c})(\ol{1}+\ol{b})} \,.
\end{equation}
Since
  $\sT_{\mathrm{L}}(\kappa) = \big(\hat{\CR}_{12}(\kappa) L^{+}_{1}(\kappa u)L^{+}_{2}(u)\big)
   \big(\hat{\CR}_{34}(\kappa) L^{-}_{3}(\kappa v)L^{-}_{4}(v)\big) |i,1,1,j\rangle$,
applying~\eqref{eq:twisted-4-matrix-element} to the remaining terms of~\eqref{eq:LHS-term-k1l1}--\eqref{eq:LHS-term-1kl1} and
evaluating the $\kappa\to \kappa_0$ limit of the resulting factors via~\eqref{eq:quasidet-1i1j}--\eqref{eq:quasidet-i1j1}, we obtain:
\begin{align*}
  \lim_{\kappa \to \kappa_{0}} \langle k,1,l,1| \hat{\CR}_{23}(q^{c}\kappa z) \sT_{\mathrm{L}}(\kappa)
  &\equiv \frac{(q-q^{-1})q^{c}\kappa_{0} z}{q^{c}\kappa_{0} z-1}
    \chi_{kl,ij} \Big(-(-1)^{\ol{1}(\ol{1}+\ol{k})} q^{(\varepsilon_{1},\varepsilon_{1})}\Big)
    (-1)^{\ol{1}(\ol{l}+\ol{j})} \wt{\gamma}^{+}_{ki}(u) \wt{\gamma}^{-}_{lj}(v) \,, \\
  \lim_{\kappa \to \kappa_{0}} \langle 1,k,1,l| \hat{\CR}_{23}(q^{c}\kappa z) \sT_{\mathrm{L}}(\kappa)
  &\equiv \frac{q-q^{-1}}{q^{c}\kappa_{0} z-1}
    \chi_{kl,ij} \Big(-(-1)^{\ol{1}(\ol{1}+\ol{j})} q^{-(\varepsilon_{1},\varepsilon_{1})}\Big)
    \wt{\gamma}^{+}_{ki}(u) \wt{\gamma}^{-}_{lj}(v) \,, \\
  \lim_{\kappa \to \kappa_{0}} \langle k,1,1,l| \hat{\CR}_{23}(q^{c}\kappa z) \sT_{\mathrm{L}}(\kappa)
  &= (-1)^{\ol{1}} \frac{q^{(\varepsilon_{1},\varepsilon_{1})} q^{c}\kappa_{0} z - q^{-(\varepsilon_{1},\varepsilon_{1})}}
    {q^{c}\kappa_{0} z-1} \chi_{kl,ij} (-1)^{\ol{1}(\ol{k}+\ol{j})} \wt{\gamma}^{+}_{ki}(u) \wt{\gamma}^{-}_{lj}(v) \,,
\end{align*}
and finally
{\allowdisplaybreaks
\begin{align*}
  \lim_{\kappa \to \kappa_{0}} & \langle 1,k,l,1| \hat{\CR}_{23}(q^{c}\kappa z) \sT_{\mathrm{L}}(\kappa) \\
  &\equiv \delta_{k \neq l,l'} (-1)^{\ol{k}\,\ol{l}}
    \chi_{lk,ij} (-1)^{\ol{1}(\ol{k}+\ol{j})} \wt{\gamma}^{+}_{li}(u) \wt{\gamma}^{-}_{kj}(v) \\
  &\quad + \delta_{k=l} \delta_{k \neq k'} (-1)^{\ol{k}}
    \frac{q^{(\varepsilon_{k},\varepsilon_{k})}q^{c}\kappa_{0} z - q^{-(\varepsilon_{k},\varepsilon_{k})}}{q^{c}\kappa_{0} z-1}
    \chi_{kl,ij} (-1)^{\ol{1}(\ol{l}+\ol{j})} \wt{\gamma}^{+}_{ki}(u) \wt{\gamma}^{-}_{kj}(v) \\
  &\quad + \delta_{k=l'} \delta_{k \neq k'} (-1)^{\ol{k}}
    \frac{q^{-(\varepsilon_{k},\varepsilon_{k})}q^{c}\kappa_{0} z - q^{(\varepsilon_{k},\varepsilon_{k})}\xi}{q^{c}\kappa_{0} z-\xi}
    \chi_{kl,ij} (-1)^{\ol{1}(\ol{l}+\ol{j})} \wt{\gamma}^{+}_{k'i}(u) \wt{\gamma}^{-}_{kj}(v) \\
  &\quad + \delta_{k=l} \delta_{k=k'} (-1)^{\ol{k}}
    \bigg( 1+ \frac{(q-q^{-1})(1-\xi)q^{c}\kappa_{0} z}{(q^{c}\kappa_{0} z-1)(q^{c}\kappa_{0} z-\xi)} \bigg)
    \chi_{kl,ij} (-1)^{\ol{1}(\ol{l}+\ol{j})} \wt{\gamma}^{+}_{ki}(u) \wt{\gamma}^{-}_{kj}(v) \\
  &\quad + \delta_{k<l} \frac{(q-q^{-1})q^{c}\kappa_{0} z}{q^{c}\kappa_{0} z-1}
    \chi_{kl,ij} (-1)^{\ol{1}(\ol{l}+\ol{j})} \wt{\gamma}^{+}_{ki}(u) \wt{\gamma}^{-}_{lj}(v) \\
  &\quad + \delta_{k>l} \frac{q-q^{-1}}{q^{c}\kappa_{0} z-1}
    \chi_{kl,ij} (-1)^{\ol{1}(\ol{l}+\ol{j})} \wt{\gamma}^{+}_{ki}(u) \wt{\gamma}^{-}_{lj}(v) \\
  &\quad - \delta_{k = l'} \frac{(q-q^{-1})q^{c}\kappa_{0} z}{q^{c}\kappa_{0} z-\xi}
    \sum_{1 < b < l} \vartheta_{l}\vartheta_{b} q^{(\rho,\varepsilon_{l}-\varepsilon_{b})}
    q^{-(\varepsilon_{l},\varepsilon_{l})/2}q^{(\varepsilon_{b},\varepsilon_{b})/2}
    \chi_{bb',ij} (-1)^{\ol{1}(\ol{b}+\ol{j})} \wt{\gamma}^{+}_{bi}(u) \wt{\gamma}^{-}_{b'j}(v) \\
  &\quad - \delta_{k = l'} \frac{(q-q^{-1})\xi}{q^{c}\kappa_{0} z-\xi}
    \sum_{l < b < 1'} \vartheta_{l}\vartheta_{b} q^{(\rho,\varepsilon_{l}-\varepsilon_{b})}
    q^{-(\varepsilon_{l},\varepsilon_{l})/2}q^{(\varepsilon_{b},\varepsilon_{b})/2}
    \chi_{bb',ij} (-1)^{\ol{1}(\ol{b}+\ol{j})} \wt{\gamma}^{+}_{bi}(u) \wt{\gamma}^{-}_{b'j}(v) \,,
\end{align*}}
where $\equiv$ denotes equality modulo the ``boundary terms'' already absorbed into $\Sigma_{\mathrm{L}}(\kappa)$ from Step~3.

\smallskip
\noindent
\emph{Step 5.}
Substituting the first three limits from Step~4 into the linear combination of $\langle k,1,l,1|$, $\langle 1,k,1,l|$,
and $\langle k,1,1,l|$ in~\eqref{eq:LHS-step1} yields
\begin{align*}
  \lim_{\kappa \to \kappa_{0}} & \bigg\{
    (-1)^{\ol{1}\,\ol{l}} \frac{q-q^{-1}}{q^{c}z-1} \langle k,1,l,1|
    + (-1)^{\ol{1}\,\ol{k}} \frac{(q-q^{-1})q^{c}z}{q^{c}z-1} \langle 1,k,1,l| \\
  &\hspace{190pt}
    + \frac{(q-q^{-1})^{2}q^{c}z}{(q^{c}z-1)^{2}} \langle k,1,1,l|
  \bigg\} \hat{\CR}_{23}(q^{c}\kappa z) \sT_{\mathrm{L}}(\kappa) \\
  &\equiv -\frac{(q-q^{-1})^{2}q^{c}z}{(q^{c}z-1)(q^{c}\kappa_{0} z-1)}
   \chi_{kl,ij} (-1)^{\ol{1}(\ol{1}+\ol{k}+\ol{j})} \cdot q^{-(\varepsilon_{1},\varepsilon_{1})}
   \wt{\gamma}^{+}_{ki}(u) \wt{\gamma}^{-}_{lj}(v) \,.
\end{align*}
Adding this to
  $(-1)^{\ol{1}(\ol{k}+\ol{l})} \lim_{\kappa \to \kappa_{0}} \langle 1,k,l,1| \hat{\CR}_{23}(q^{c}\kappa z) \sT_{\mathrm{L}}(\kappa)$
from Step~4 and multiplying by the overall prefactor
  $(-1)^{\ol{1}} \frac{q^{(\varepsilon_{1},\varepsilon_{1})}q^{c}\kappa^{-1}z - q^{-(\varepsilon_{1},\varepsilon_{1})}}
   {q^{c}\kappa^{-1}z - 1}$
(while using $\lim_{\kappa \to \kappa_{0}} \Sigma_{\mathrm{L}}(\kappa) = 0$ from Step~3), we~get:
{\allowdisplaybreaks
\begin{align*}
  (-1)^{\ol{1}} & \left(
    \frac{q^{(\varepsilon_{1},\varepsilon_{1})}q^{c}\kappa_{0}^{-1}z - q^{-(\varepsilon_{1},\varepsilon_{1})}}
         {q^{c}\kappa_{0}^{-1}z - 1}\right)^{-1}
  \lim_{\kappa \to \kappa_{0}} \langle k,1,1,l |
  \hat{\CR}_{23}(q^{c}\kappa^{-1} z) \hat{\CR}_{34}(q^{c}z) \hat{\CR}_{12}(q^{c}z) \\
  &\hspace{140pt} \times
  \hat{\CR}_{23}(q^{c}\kappa z) \hat{\CR}_{12}(\kappa) \hat{\CR}_{34}(\kappa)
  L^{+}_{1}(\kappa u)L^{+}_{2}(u) L^{-}_{3}(\kappa v)L^{-}_{4}(v)
  | i,1,1,j \rangle \\
  &= \delta_{k \neq l,l'} (-1)^{\ol{i}\,\ol{j} + \ol{k}\,\ol{i}} \wt{\gamma}^{+}_{li}(u) \wt{\gamma}^{-}_{kj}(v) \\
  &\quad + \delta_{k=l} \delta_{k \neq k'} (-1)^{\ol{k}}
    \frac{q^{(\varepsilon_{k},\varepsilon_{k})}q^{c}z - q^{-(\varepsilon_{k},\varepsilon_{k})}}{q^{c}z-1}
    (-1)^{\ol{k} + \ol{i}\,\ol{j} + \ol{i}\,\ol{l}} \wt{\gamma}^{+}_{ki}(u) \wt{\gamma}^{-}_{kj}(v) \\
  &\quad + \delta_{k=l'} \delta_{k \neq k'} (-1)^{\ol{k}}
    \frac{q^{-(\varepsilon_{k},\varepsilon_{k})}q^{c}\kappa_{0} z - q^{(\varepsilon_{k},\varepsilon_{k})}\xi}{q^{c}\kappa_{0} z-\xi}
    (-1)^{\ol{k} + \ol{i}\,\ol{j} + \ol{i}\,\ol{l}} \wt{\gamma}^{+}_{k'i}(u) \wt{\gamma}^{-}_{kj}(v) \\
  &\quad + \delta_{k=l} \delta_{k=k'} (-1)^{\ol{k}}
    \bigg( 1+ \frac{(q-q^{-1})(\kappa_{0}-\xi)q^{c}z}{(q^{c}z-1)(q^{c}\kappa_{0} z-\xi)} \bigg)
    (-1)^{\ol{k} + \ol{i}\,\ol{j} + \ol{i}\,\ol{l}} \wt{\gamma}^{+}_{ki}(u) \wt{\gamma}^{-}_{kj}(v) \\
  &\quad + \delta_{k<l} \frac{(q-q^{-1})q^{c}z}{q^{c}z-1}
    (-1)^{\ol{k}\,\ol{l} + \ol{i}\,\ol{j} + \ol{i}\,\ol{l}} \wt{\gamma}^{+}_{ki}(u) \wt{\gamma}^{-}_{lj}(v)
    + \delta_{k>l} \frac{q-q^{-1}}{q^{c}z-1}
    (-1)^{\ol{k}\,\ol{l} + \ol{i}\,\ol{j} + \ol{i}\,\ol{l}} \wt{\gamma}^{+}_{ki}(u) \wt{\gamma}^{-}_{lj}(v) \\
  &\quad - \delta_{k = l'} \frac{(q-q^{-1})q^{c}\kappa_{0} z}{q^{c}\kappa_{0} z-\xi}
    \sum_{1 < b < l} \vartheta_{l}\vartheta_{b} q^{(\rho,\varepsilon_{l}-\varepsilon_{b})}
    q^{-(\varepsilon_{l},\varepsilon_{l})/2}q^{(\varepsilon_{b},\varepsilon_{b})/2}
    (-1)^{\ol{b} + \ol{i}\,\ol{j} + \ol{i}\,\ol{b}} \wt{\gamma}^{+}_{bi}(u) \wt{\gamma}^{-}_{b'j}(v) \\
  &\quad - \delta_{k = l'} \frac{(q-q^{-1})\xi}{q^{c}\kappa_{0} z-\xi}
    \sum_{l < b < 1'} \vartheta_{l}\vartheta_{b} q^{(\rho,\varepsilon_{l}-\varepsilon_{b})}
    q^{-(\varepsilon_{l},\varepsilon_{l})/2}q^{(\varepsilon_{b},\varepsilon_{b})/2}
    (-1)^{\ol{b} + \ol{i}\,\ol{j} + \ol{i}\,\ol{b}} \wt{\gamma}^{+}_{bi}(u) \wt{\gamma}^{-}_{b'j}(v) \\
  &= \langle k,l | \hat{\CR}^{[s-1]}_{12}(q^{c}z) \wt{\Gamma}^{+}_{1}(u)\wt{\Gamma}^{-}_{2}(v) | i,j \rangle \,,
\end{align*}}
with the last equality due to~\eqref{eq:calR-hat-explicit},~\eqref{eq:Gamma-def}, and~\eqref{eq:parameter-shifts}
(we substitute $\xi = \kappa_{0}\xi^{[s-1]}$ and cancel~$\kappa_{0}$).


\subsection{Computation of the right-hand side of~\eqref{eq:fused-RLL-pm}}\label{ssec:proof-RHS}
\

\smallskip
\noindent
\emph{Step 1.}
Applying $\hat{\CR}_{23}(q^{-c}\kappa^{-1} z)$ and then $\hat{\CR}_{34}(q^{-c}z) \hat{\CR}_{12}(q^{-c}z)$
to $|i,1,1,j\rangle$ via~\eqref{eq:calR-hat-explicit}, we obtain
\begin{equation*}
  \hat{\CR}_{23}(q^{-c}\kappa^{-1} z) |i,1,1,j\rangle = (-1)^{\ol{1}} \cdot
  \frac{q^{(\varepsilon_{1},\varepsilon_{1})}q^{-c}\kappa^{-1}z - q^{-(\varepsilon_{1},\varepsilon_{1})}}
       {q^{-c}\kappa^{-1}z - 1} \cdot |i,1,1,j\rangle
\end{equation*}
and hence
\begin{equation}\label{eq:RHS-step1}
\begin{split}
  \hat{\CR}_{34}(q^{-c}z) \hat{\CR}_{12}(q^{-c}z) & \hat{\CR}_{23}(q^{-c}\kappa^{-1} z) |i,1,1,j\rangle \\
  =\, & (-1)^{\ol{1}} \cdot
   \frac{q^{(\varepsilon_{1},\varepsilon_{1})}q^{-c}\kappa^{-1}z - q^{-(\varepsilon_{1},\varepsilon_{1})}}{q^{-c}\kappa^{-1}z - 1} \\
  &\times \bigg\{
    (-1)^{\ol{1}(\ol{i}+\ol{j})} |1,i,j,1\rangle
    + (-1)^{\ol{1}\,\ol{j}} \frac{q-q^{-1}}{q^{-c}z-1} |i,1,j,1\rangle \\
  &\hspace{20pt}
    + (-1)^{\ol{1}\,\ol{i}} \frac{(q-q^{-1})q^{-c}z}{q^{-c}z-1} |1,i,1,j\rangle
    + \frac{(q-q^{-1})^{2}q^{-c}z}{(q^{-c}z-1)^{2}} |i,1,1,j\rangle
  \bigg\} \,.
\end{split}
\end{equation}

\smallskip
\noindent
\emph{Step 2.}
Set
\begin{equation*}
  \sT_{\mathrm{R}}'(\kappa) \coloneqq \langle k,1,1,l | L^{-}_{1}(v)L^{-}_{2}(\kappa v) L^{+}_{3}(u)L^{+}_{4}(\kappa u) \,, \qquad
  \sT_{\mathrm{R}}(\kappa) \coloneqq \sT_{\mathrm{R}}'(\kappa) \hat{\CR}_{12}(\kappa) \hat{\CR}_{34}(\kappa) \,.
\end{equation*}
Using~\eqref{eq:calR-hat-explicit}, we expand the action of $\hat{\CR}_{23}(q^{-c}\kappa z)$ on each of the four vectors
in~\eqref{eq:RHS-step1}:
\begin{align}
  \sT_{\mathrm{R}}(\kappa) \hat{\CR}_{23}(q^{-c}\kappa z) |i,1,j,1\rangle
  &= \frac{(q-q^{-1})q^{-c}\kappa z}{q^{-c}\kappa z-1} \sT_{\mathrm{R}}(\kappa) |i,1,j,1\rangle
   + (-1)^{\ol{1}\,\ol{j}} \sT_{\mathrm{R}}(\kappa) |i,j,1,1\rangle \,, \label{eq:RHS-term-i1j1}\\
  \sT_{\mathrm{R}}(\kappa) \hat{\CR}_{23}(q^{-c}\kappa z) |1,i,1,j\rangle
  &= \frac{q-q^{-1}}{q^{-c}\kappa z-1} \sT_{\mathrm{R}}(\kappa) |1,i,1,j\rangle
   + (-1)^{\ol{1}\,\ol{i}} \sT_{\mathrm{R}}(\kappa) |1,1,i,j\rangle \,, \label{eq:RHS-term-1i1j}\\
  \sT_{\mathrm{R}}(\kappa) \hat{\CR}_{23}(q^{-c}\kappa z) |i,1,1,j\rangle
  &= (-1)^{\ol{1}} \frac{q^{(\varepsilon_{1},\varepsilon_{1})} q^{-c}\kappa z - q^{-(\varepsilon_{1},\varepsilon_{1})}}{q^{-c}\kappa z-1}
  \sT_{\mathrm{R}}(\kappa) |i,1,1,j\rangle \,, \label{eq:RHS-term-i11j}
\end{align}
and finally
{\allowdisplaybreaks
\begin{equation}\label{eq:RHS-term-1ij1}
\begin{split}
  \sT_{\mathrm{R}}(\kappa) & \hat{\CR}_{23}(q^{-c}\kappa z) |1,i,j,1\rangle = \sT_{\mathrm{R}}(\kappa) \\
  \times \Bigg\{ &
    \delta_{i \neq j,j'} (-1)^{\ol{i}\,\ol{j}} |1,j,i,1\rangle
    + \delta_{i=j} \delta_{i \neq i'} (-1)^{\ol{i}}
    \frac{q^{(\varepsilon_{i},\varepsilon_{i})}q^{-c}\kappa z - q^{-(\varepsilon_{i},\varepsilon_{i})}}
         {q^{-c}\kappa z-1} |1,i,i,1\rangle \\
  & + \delta_{i=j'} \delta_{i \neq i'} (-1)^{\ol{i}}
    \frac{q^{-(\varepsilon_{i},\varepsilon_{i})}q^{-c}\kappa z - q^{(\varepsilon_{i},\varepsilon_{i})}\xi}
         {q^{-c}\kappa z-\xi} |1,i',i,1\rangle \\
  & + \delta_{i=j} \delta_{i=i'} (-1)^{\ol{i}} \bigg( 1 +
    \frac{(q-q^{-1})(1-\xi)q^{-c}\kappa z}
         {(q^{-c}\kappa z-1)(q^{-c}\kappa z-\xi)} \bigg) |1,i,i,1\rangle \\
  & + \delta_{i<j} \frac{(q-q^{-1})q^{-c}\kappa z}{q^{-c}\kappa z-1} |1,i,j,1\rangle
    + \delta_{i>j} \frac{q-q^{-1}}{q^{-c}\kappa z-1} |1,i,j,1\rangle \\
  & - \delta_{i = j'} \frac{(q-q^{-1})q^{-c}\kappa z}{q^{-c}\kappa z-\xi}
      \sum_{i < a < 1'} (-1)^{\ol{a}+\ol{i}} \vartheta_{a}\vartheta_{i} q^{(\rho,\varepsilon_{a}-\varepsilon_{i})}
        q^{-(\varepsilon_{a},\varepsilon_{a})/2}q^{(\varepsilon_{i},\varepsilon_{i})/2} |1,a',a,1\rangle \\
  & - \delta_{i = j'} \frac{(q-q^{-1})\xi}{q^{-c}\kappa z-\xi}
      \sum_{1 < a < i} (-1)^{\ol{a}+\ol{i}} \vartheta_{a}\vartheta_{i} q^{(\rho,\varepsilon_{a}-\varepsilon_{i})}
        q^{-(\varepsilon_{a},\varepsilon_{a})/2}q^{(\varepsilon_{i},\varepsilon_{i})/2} |1,a',a,1\rangle \\
  & - \delta_{i = j'} \frac{(q-q^{-1})q^{-c}\kappa z}{q^{-c}\kappa z-\xi}
      (-1)^{\ol{1}+\ol{i}} \vartheta_{1'}\vartheta_{i} q^{(\rho,\varepsilon_{1'}-\varepsilon_{i})}
        q^{-(\varepsilon_{1},\varepsilon_{1})/2}q^{(\varepsilon_{i},\varepsilon_{i})/2} |1,1,1',1\rangle \\
  & - \delta_{i = j'} \frac{(q-q^{-1})\xi}{q^{-c}\kappa z-\xi}
      (-1)^{\ol{1}+\ol{i}} \vartheta_{1}\vartheta_{i} q^{(\rho,\varepsilon_{1}-\varepsilon_{i})}
        q^{-(\varepsilon_{1},\varepsilon_{1})/2}q^{(\varepsilon_{i},\varepsilon_{i})/2} |1,1',1,1\rangle \Bigg\} \,.
\end{split}
\end{equation}}

\smallskip
\noindent
\emph{Step 3.}
Again, before taking $\lim_{\kappa \to \kappa_{0}}$, let us collect the four ``boundary terms''
in~\eqref{eq:RHS-term-i1j1}, \eqref{eq:RHS-term-1i1j}, and~\eqref{eq:RHS-term-1ij1} involving $|1,1\rangle$ in either
the first two or the last two tensor factors (i.e.\ involving $|1,1,\ast,\ast\rangle$ or $|\ast,\ast,1,1\rangle$).
In the linear combination of~\eqref{eq:RHS-term-i1j1}--\eqref{eq:RHS-term-1ij1} inside the braces of~\eqref{eq:RHS-step1}
(i.e.\ before multiplying by the overall prefactor
  $(-1)^{\ol{1}} \frac{q^{(\varepsilon_{1},\varepsilon_{1})}q^{-c}\kappa^{-1}z - q^{-(\varepsilon_{1},\varepsilon_{1})}}
   {q^{-c}\kappa^{-1}z - 1}$),
the combined contribution of these four terms is given by
\begin{equation*}
\begin{split}
  \Sigma_{\mathrm{R}}(\kappa) &\coloneqq
  \frac{q-q^{-1}}{q^{-c}z-1} \sT_{\mathrm{R}}(\kappa) |i,j,1,1\rangle
  + \frac{(q-q^{-1})q^{-c}z}{q^{-c}z-1} \sT_{\mathrm{R}}(\kappa) |1,1,i,j\rangle \\
  &\quad - \delta_{i = j'} (-1)^{\ol{1}+\ol{i}} \sT_{\mathrm{R}}(\kappa) \bigg\{
    \frac{(q-q^{-1})q^{-c}\kappa z}{q^{-c}\kappa z-\xi}
    \vartheta_{1'}\vartheta_{i} q^{(\rho,\varepsilon_{1'}-\varepsilon_{i})}
    q^{-(\varepsilon_{1},\varepsilon_{1})/2}q^{(\varepsilon_{i},\varepsilon_{i})/2} |1,1,1',1\rangle \\
  &\hspace{120pt}
    + \frac{(q-q^{-1})\xi}{q^{-c}\kappa z-\xi}
    \vartheta_{1}\vartheta_{i} q^{(\rho,\varepsilon_{1}-\varepsilon_{i})}
    q^{-(\varepsilon_{1},\varepsilon_{1})/2}q^{(\varepsilon_{i},\varepsilon_{i})/2} |1,1',1,1\rangle
  \bigg\} \,.
\end{split}
\end{equation*}
Recall that $\sT_{\mathrm{R}}(\kappa) = \sT_{\mathrm{R}}'(\kappa) \hat{\CR}_{12}(\kappa) \hat{\CR}_{34}(\kappa)$, so in each term
of $\Sigma_{\mathrm{R}}(\kappa)$, one of the factors $\hat{\CR}_{12}(\kappa), \hat{\CR}_{34}(\kappa)$ acts on $|1,1\rangle$,
while the other acts on $|i,j\rangle$, $|1',1\rangle$, or $|1,1'\rangle$. We claim that
$\lim_{\kappa \to \kappa_{0}} \Sigma_{\mathrm{R}}(\kappa) = 0$:
\begin{enumerate}[label=(\roman*), leftmargin=0.6cm]

\item
If $\kappa_{0} \neq \xi$, then $\hat{\CR}(\kappa)$ is regular at $\kappa = \kappa_{0}$ (since $\kappa_{0} \neq 1$).
Thus, in each term of $\Sigma_{\mathrm{R}}(\kappa)$, the copy of $\hat{\CR}(\kappa)$ acting on $|1,1\rangle$ vanishes as
$\kappa \to \kappa_{0}$ by~\eqref{eq:quasidet-11-lim} while the other copy of $\hat{\CR}(\kappa)$ remains regular, so
$\lim_{\kappa \to \kappa_{0}} \Sigma_{\mathrm{R}}(\kappa) = 0$.

\item
If $\kappa_{0} = \xi$, then $\hat{\CR}(\kappa)$ has a simple pole at $\kappa = \kappa_{0}$. By~\eqref{eq:calR-hat-explicit},
$\hat{\CR}(\kappa)|i,j\rangle$ is regular at $\kappa = \kappa_{0}$ for $i \neq j'$ (so the first two terms of
$\Sigma_{\mathrm{R}}(\kappa)$ vanish as $\kappa \to \kappa_{0}$ unless $i = j'$), while on $|r,r'\rangle$ ($1 \leq r \leq 1'$)
we have
\begin{equation*}
  \lim_{\kappa \to \kappa_{0}}  (\kappa - \kappa_{0}) \hat{\CR}(\kappa) |r,r'\rangle =
  -(q-q^{-1})\kappa_{0} \sum_{1 \leq a \leq 1'} (-1)^{\ol{a}+\ol{r}} \vartheta_{a}\vartheta_{r}
    q^{(\rho,\varepsilon_{a}-\varepsilon_{r})}q^{-(\varepsilon_{a},\varepsilon_{a})/2}q^{(\varepsilon_{r},\varepsilon_{r})/2}
    |a',a\rangle \,,
\end{equation*}
where the $a \neq r$ summands come from the last two sums in~\eqref{eq:calR-hat-explicit} and the $a = r$ summand comes
from the $E_{r'r} \otimes E_{rr'}$ if $r\ne r'$ or $E_{rr} \otimes E_{rr}$ if $r = r'$ terms of~\eqref{eq:calR-hat-explicit}.
Combined with~\eqref{eq:quasidet-11}, which gives
\begin{equation*}
  \lim_{\kappa \to \kappa_{0}} (\kappa - \kappa_{0})^{-1} \hat{\CR}(\kappa)|1,1\rangle
  = \frac{(-1)^{\ol{1}}q^{(\varepsilon_{1},\varepsilon_{1})}}{\kappa_{0}-1}|1,1\rangle
  = -\frac{1}{(q-q^{-1})\kappa_{0}}|1,1\rangle \,,
\end{equation*}
we obtain for any $1 \leq r \leq 1'$:
\begin{align*}
  \lim_{\kappa \to \kappa_{0}} \hat{\CR}_{12}(\kappa)\hat{\CR}_{34}(\kappa) |r,r',1,1\rangle
    &= \sum_{1 \leq a \leq 1'} (-1)^{\ol{a}+\ol{r}} \vartheta_{a}\vartheta_{r} q^{(\rho,\varepsilon_{a}-\varepsilon_{r})}
    q^{-(\varepsilon_{a},\varepsilon_{a})/2}q^{(\varepsilon_{r},\varepsilon_{r})/2} |a',a,1,1\rangle \,, \\
  \lim_{\kappa \to \kappa_{0}} \hat{\CR}_{12}(\kappa)\hat{\CR}_{34}(\kappa) |1,1,r,r'\rangle
    &= \sum_{1 \leq a \leq 1'} (-1)^{\ol{a}+\ol{r}} \vartheta_{a}\vartheta_{r} q^{(\rho,\varepsilon_{a}-\varepsilon_{r})}
    q^{-(\varepsilon_{a},\varepsilon_{a})/2}q^{(\varepsilon_{r},\varepsilon_{r})/2} |1,1,a',a\rangle \,.
\end{align*}
Substituting these limits (for $r=i$, $r=1'$, and $r=1$) into $\Sigma_{\mathrm{R}}(\kappa)$ and using $\kappa_{0} = \xi$ gives
\begin{align*}
  \lim_{\kappa \to \kappa_{0}} & \Sigma_{\mathrm{R}}(\kappa) \\
  =\, & \delta_{i=j'} \frac{q-q^{-1}}{q^{-c}z-1} \sum_{1 \leq a \leq 1'} (-1)^{\ol{a}+\ol{i}} \vartheta_{a}\vartheta_{i}
    q^{(\rho,\varepsilon_{a}-\varepsilon_{i})}q^{-(\varepsilon_{a},\varepsilon_{a})/2}q^{(\varepsilon_{i},\varepsilon_{i})/2}
    \sT_{\mathrm{R}}'(\kappa_{0}) |a',a,1,1\rangle \\
  & + \delta_{i=j'} \frac{(q-q^{-1})q^{-c}z}{q^{-c}z-1} \sum_{1 \leq a \leq 1'} (-1)^{\ol{a}+\ol{i}} \vartheta_{a}\vartheta_{i}
    q^{(\rho,\varepsilon_{a}-\varepsilon_{i})}q^{-(\varepsilon_{a},\varepsilon_{a})/2}q^{(\varepsilon_{i},\varepsilon_{i})/2}
    \sT_{\mathrm{R}}'(\kappa_{0}) |1,1,a',a\rangle \\
  & - \delta_{i=j'} (-1)^{\ol{1}+\ol{i}} \frac{(q-q^{-1})q^{-c}z}{q^{-c}z-1} \vartheta_{1'}\vartheta_{i}
    q^{(\rho,\varepsilon_{1'}-\varepsilon_{i})}q^{-(\varepsilon_{1},\varepsilon_{1})/2}q^{(\varepsilon_{i},\varepsilon_{i})/2} \\
  &\hspace{75pt} \times
      \sum_{1 \leq a \leq 1'} (-1)^{\ol{a}+\ol{1}} \vartheta_{a}\vartheta_{1'} q^{(\rho,\varepsilon_{a}-\varepsilon_{1'})}
      q^{-(\varepsilon_{a},\varepsilon_{a})/2}q^{(\varepsilon_{1},\varepsilon_{1})/2} \sT_{\mathrm{R}}'(\kappa_{0}) |1,1,a',a\rangle \\
  & - \delta_{i=j'} (-1)^{\ol{1}+\ol{i}} \frac{q-q^{-1}}{q^{-c}z-1} \vartheta_{1}\vartheta_{i}
    q^{(\rho,\varepsilon_{1}-\varepsilon_{i})}q^{-(\varepsilon_{1},\varepsilon_{1})/2}q^{(\varepsilon_{i},\varepsilon_{i})/2} \\
  &\hspace{75pt} \times
    \sum_{1 \leq a \leq 1'} (-1)^{\ol{a}+\ol{1}} \vartheta_{a}\vartheta_{1} q^{(\rho,\varepsilon_{a}-\varepsilon_{1})}
    q^{-(\varepsilon_{a},\varepsilon_{a})/2}q^{(\varepsilon_{1},\varepsilon_{1})/2} \sT_{\mathrm{R}}'(\kappa_{0}) |a',a,1,1\rangle \\
  =\, & 0 \,.
\end{align*}
\end{enumerate}

\smallskip
\noindent
\emph{Step 4.}
Since
  $\sT_{\mathrm{R}}(\kappa) = \langle k,1,1,l| \big(L^{-}_{1}(v)L^{-}_{2}(\kappa v) \hat{\CR}_{12}(\kappa)\big)
   \big(L^{+}_{3}(u)L^{+}_{4}(\kappa u) \hat{\CR}_{34}(\kappa)\big)$,
applying~\eqref{eq:twisted-4-matrix-element} (with $\chi_{ab,cd}$ as in~\eqref{eq:chi-sign-def}) to the remaining terms
of~\eqref{eq:RHS-term-i1j1}--\eqref{eq:RHS-term-1ij1} and evaluating the $\kappa \to \kappa_{0}$ limits of the resulting factors
via~\eqref{eq:quasidet-1i1j}--\eqref{eq:quasidet-i1j1} (and~\eqref{eq:rescaled-braided-RLL-1}), we obtain:
\begin{align*}
  \lim_{\kappa \to \kappa_{0}} \sT_{\mathrm{R}}(\kappa) \hat{\CR}_{23}(q^{-c}\kappa z) |i,1,j,1\rangle
  &\equiv \frac{(q-q^{-1})q^{-c}\kappa_{0} z}{q^{-c}\kappa_{0} z-1}
   \chi_{kl,ij} \Big(-(-1)^{\ol{1}(\ol{1}+\ol{k})} q^{(\varepsilon_{1},\varepsilon_{1})}\Big)
   \wt{\gamma}^{-}_{ki}(v) \wt{\gamma}^{+}_{lj}(u) \,, \\
  \lim_{\kappa \to \kappa_{0}} \sT_{\mathrm{R}}(\kappa) \hat{\CR}_{23}(q^{-c}\kappa z) |1,i,1,j\rangle
  &\equiv \frac{q-q^{-1}}{q^{-c}\kappa_{0} z-1} \chi_{kl,ij} (-1)^{\ol{1}(\ol{k}+\ol{i})}
   \Big(-(-1)^{\ol{1}(\ol{1}+\ol{j})} q^{-(\varepsilon_{1},\varepsilon_{1})}\Big) \wt{\gamma}^{-}_{ki}(v) \wt{\gamma}^{+}_{lj}(u) \,, \\
  \lim_{\kappa \to \kappa_{0}} \sT_{\mathrm{R}}(\kappa) \hat{\CR}_{23}(q^{-c}\kappa z) |i,1,1,j\rangle
  &= (-1)^{\ol{1}} \frac{q^{(\varepsilon_{1},\varepsilon_{1})} q^{-c}\kappa_{0} z - q^{-(\varepsilon_{1},\varepsilon_{1})}}
    {q^{-c}\kappa_{0} z-1} \chi_{kl,ij} (-1)^{\ol{1}(\ol{k}+\ol{j})} \wt{\gamma}^{-}_{ki}(v) \wt{\gamma}^{+}_{lj}(u) \,,
\end{align*}
and finally
{\allowdisplaybreaks
\begin{align*}
  & \lim_{\kappa \to \kappa_{0}}  \sT_{\mathrm{R}}(\kappa) \hat{\CR}_{23}(q^{-c}\kappa z) |1,i,j,1\rangle \\
  &\equiv \delta_{i \neq j,j'} (-1)^{\ol{i}\,\ol{j}}
    \chi_{kl,ji} (-1)^{\ol{1}(\ol{k}+\ol{j})} \wt{\gamma}^{-}_{kj}(v) \wt{\gamma}^{+}_{li}(u) \\
  &\quad + \delta_{i=j} \delta_{i \neq i'} (-1)^{\ol{i}}
    \frac{q^{(\varepsilon_{i},\varepsilon_{i})}q^{-c}\kappa_{0} z - q^{-(\varepsilon_{i},\varepsilon_{i})}}{q^{-c}\kappa_{0} z-1}
    \chi_{kl,ij} (-1)^{\ol{1}(\ol{k}+\ol{i})} \wt{\gamma}^{-}_{ki}(v) \wt{\gamma}^{+}_{li}(u) \\
  &\quad + \delta_{i=j'} \delta_{i \neq i'} (-1)^{\ol{i}}
    \frac{q^{-(\varepsilon_{i},\varepsilon_{i})}q^{-c}\kappa_{0} z - q^{(\varepsilon_{i},\varepsilon_{i})}\xi}{q^{-c}\kappa_{0} z-\xi}
    \chi_{kl,ij} (-1)^{\ol{1}(\ol{k}+\ol{i})} \wt{\gamma}^{-}_{ki'}(v) \wt{\gamma}^{+}_{li}(u) \\
  &\quad + \delta_{i=j} \delta_{i=i'} (-1)^{\ol{i}}
    \bigg( 1+ \frac{(q-q^{-1})(1-\xi)q^{-c}\kappa_{0} z}{(q^{-c}\kappa_{0} z-1)(q^{-c}\kappa_{0} z-\xi)} \bigg)
    \chi_{kl,ij} (-1)^{\ol{1}(\ol{k}+\ol{i})} \wt{\gamma}^{-}_{ki}(v) \wt{\gamma}^{+}_{li}(u) \\
  &\quad + \delta_{i<j} \frac{(q-q^{-1})q^{-c}\kappa_{0} z}{q^{-c}\kappa_{0} z-1}
    \chi_{kl,ij} (-1)^{\ol{1}(\ol{k}+\ol{i})} \wt{\gamma}^{-}_{ki}(v) \wt{\gamma}^{+}_{lj}(u) \\
  &\quad + \delta_{i>j} \frac{q-q^{-1}}{q^{-c}\kappa_{0} z-1}
    \chi_{kl,ij} (-1)^{\ol{1}(\ol{k}+\ol{i})} \wt{\gamma}^{-}_{ki}(v) \wt{\gamma}^{+}_{lj}(u) \\
  &\quad - \delta_{i = j'} \frac{(q-q^{-1})q^{-c}\kappa_{0} z}{q^{-c}\kappa_{0} z-\xi}
    \sum_{i < a < 1'} (-1)^{\ol{a}+\ol{i}} \vartheta_{a}\vartheta_{i} q^{(\rho,\varepsilon_{a}-\varepsilon_{i})}
    q^{-(\varepsilon_{a},\varepsilon_{a})/2}q^{(\varepsilon_{i},\varepsilon_{i})/2}
    \chi_{kl,a'a} (-1)^{\ol{1}(\ol{k}+\ol{a})} \wt{\gamma}^{-}_{ka'}(v) \wt{\gamma}^{+}_{la}(u) \\
  &\quad - \delta_{i = j'} \frac{(q-q^{-1})\xi}{q^{-c}\kappa_{0} z-\xi}
    \sum_{1 < a < i} (-1)^{\ol{a}+\ol{i}} \vartheta_{a}\vartheta_{i} q^{(\rho,\varepsilon_{a}-\varepsilon_{i})}
    q^{-(\varepsilon_{a},\varepsilon_{a})/2}q^{(\varepsilon_{i},\varepsilon_{i})/2}
    \chi_{kl,a'a} (-1)^{\ol{1}(\ol{k}+\ol{a})} \wt{\gamma}^{-}_{ka'}(v) \wt{\gamma}^{+}_{la}(u) \,,
\end{align*}}
where $\equiv$ denotes equality modulo the ``boundary terms'' already absorbed into $\Sigma_{\mathrm{R}}(\kappa)$ from Step~3.

\smallskip
\noindent
\emph{Step 5.}
Substituting the first three limits from Step~4 into the linear combination of $|i,1,j,1\rangle$, $|1,i,1,j\rangle$,
and $|i,1,1,j\rangle$ in~\eqref{eq:RHS-step1} yields
\begin{align*}
  \lim_{\kappa \to \kappa_{0}} & \sT_{\mathrm{R}}(\kappa) \hat{\CR}_{23}(q^{-c}\kappa z) \bigg\{
    (-1)^{\ol{1}\,\ol{j}} \frac{q-q^{-1}}{q^{-c}z-1} |i,1,j,1\rangle
    + (-1)^{\ol{1}\,\ol{i}} \frac{(q-q^{-1})q^{-c}z}{q^{-c}z-1} |1,i,1,j\rangle \\
  &\hspace{230pt}
    + \frac{(q-q^{-1})^{2}q^{-c}z}{(q^{-c}z-1)^{2}} |i,1,1,j\rangle \bigg\} \\
  &\equiv -\frac{(q-q^{-1})^{2}q^{-c}z}{(q^{-c}z-1)(q^{-c}\kappa_{0} z-1)}
  \chi_{kl,ij} (-1)^{\ol{1}(\ol{1}+\ol{k}+\ol{j})} \cdot q^{-(\varepsilon_{1},\varepsilon_{1})}
  \wt{\gamma}^{-}_{ki}(v) \wt{\gamma}^{+}_{lj}(u) \,.
\end{align*}
Adding this to
  $(-1)^{\ol{1}(\ol{i}+\ol{j})} \lim_{\kappa \to \kappa_{0}} \sT_{\mathrm{R}}(\kappa) \hat{\CR}_{23}(q^{-c}\kappa z) |1,i,j,1\rangle$
from Step~4  and multiplying by the overall prefactor
   $(-1)^{\ol{1}} \cdot
   \frac{q^{(\varepsilon_{1},\varepsilon_{1})}q^{-c}\kappa^{-1}z - q^{-(\varepsilon_{1},\varepsilon_{1})}}{q^{-c}\kappa^{-1}z - 1}$
(while also using $\lim_{\kappa \to \kappa_{0}} \Sigma_{\mathrm{R}}(\kappa) = 0$ from Step~3), we obtain:
{\allowdisplaybreaks
\begin{align*}
  &(-1)^{\ol{1}}
  \left(\frac{q^{(\varepsilon_{1},\varepsilon_{1})}q^{-c}\kappa_{0}^{-1}z - q^{-(\varepsilon_{1},\varepsilon_{1})}}
             {q^{-c}\kappa_{0}^{-1}z - 1}\right)^{-1}
  \lim_{\kappa \to \kappa_{0}} \langle k,1,1,l |
  L^{-}_{1}(v)L^{-}_{2}(\kappa v) L^{+}_{3}(u)L^{+}_{4}(\kappa u)
  \hat{\CR}_{12}(\kappa) \hat{\CR}_{34}(\kappa) \\
  &\hspace{175pt} \times
  \hat{\CR}_{23}(q^{-c}\kappa z) \hat{\CR}_{34}(q^{-c}z) \hat{\CR}_{12}(q^{-c}z) \hat{\CR}_{23}(q^{-c}\kappa^{-1} z)
  | i,1,1,j \rangle \\
  &= \delta_{i \neq j,j'} (-1)^{\ol{k}\,\ol{l} + \ol{j}\,\ol{l}} \wt{\gamma}^{-}_{kj}(v) \wt{\gamma}^{+}_{li}(u) \\
  &\quad + \delta_{i=j} \delta_{i \neq i'} (-1)^{\ol{i}}
    \frac{q^{(\varepsilon_{i},\varepsilon_{i})}q^{-c}z - q^{-(\varepsilon_{i},\varepsilon_{i})}}{q^{-c}z-1}
    (-1)^{\ol{k}\,\ol{l} + \ol{i} + \ol{i}\,\ol{l}} \wt{\gamma}^{-}_{ki}(v) \wt{\gamma}^{+}_{li}(u) \\
  &\quad + \delta_{i=j'} \delta_{i \neq i'} (-1)^{\ol{i}}
    \frac{q^{-(\varepsilon_{i},\varepsilon_{i})}q^{-c}\kappa_{0} z - q^{(\varepsilon_{i},\varepsilon_{i})}\xi}{q^{-c}\kappa_{0} z-\xi}
    (-1)^{\ol{k}\,\ol{l} + \ol{i} + \ol{i}\,\ol{l}} \wt{\gamma}^{-}_{ki'}(v) \wt{\gamma}^{+}_{li}(u) \\
  &\quad + \delta_{i=j} \delta_{i=i'} (-1)^{\ol{i}} \bigg(
      1+ \frac{(q-q^{-1})(\kappa_{0}-\xi)q^{-c}z}{(q^{-c}z-1)(q^{-c}\kappa_{0} z-\xi)}
    \bigg)
    (-1)^{\ol{k}\,\ol{l} + \ol{i} + \ol{i}\,\ol{l}} \wt{\gamma}^{-}_{ki}(v) \wt{\gamma}^{+}_{li}(u) \\
  &\quad + \delta_{i<j} \frac{(q-q^{-1})q^{-c}z}{q^{-c}z-1}
    (-1)^{\ol{k}\,\ol{l} + \ol{i}\,\ol{j} + \ol{i}\,\ol{l}} \wt{\gamma}^{-}_{ki}(v) \wt{\gamma}^{+}_{lj}(u)
    + \delta_{i>j} \frac{q-q^{-1}}{q^{-c}z-1}
    (-1)^{\ol{k}\,\ol{l} + \ol{i}\,\ol{j} + \ol{i}\,\ol{l}} \wt{\gamma}^{-}_{ki}(v) \wt{\gamma}^{+}_{lj}(u) \\
  &\quad - \delta_{i = j'} \frac{(q-q^{-1})q^{-c}\kappa_{0} z}{q^{-c}\kappa_{0} z-\xi}
    \sum_{i < a < 1'} (-1)^{\ol{a}+\ol{i}} \vartheta_{a}\vartheta_{i} q^{(\rho,\varepsilon_{a}-\varepsilon_{i})}
    q^{-(\varepsilon_{a},\varepsilon_{a})/2}q^{(\varepsilon_{i},\varepsilon_{i})/2}
    (-1)^{\ol{k}\,\ol{l} + \ol{a} + \ol{a}\,\ol{l}} \wt{\gamma}^{-}_{ka'}(v) \wt{\gamma}^{+}_{la}(u) \\
  &\quad - \delta_{i = j'} \frac{(q-q^{-1})\xi}{q^{-c}\kappa_{0} z-\xi}
    \sum_{1 < a < i} (-1)^{\ol{a}+\ol{i}} \vartheta_{a}\vartheta_{i} q^{(\rho,\varepsilon_{a}-\varepsilon_{i})}
    q^{-(\varepsilon_{a},\varepsilon_{a})/2}q^{(\varepsilon_{i},\varepsilon_{i})/2}
    (-1)^{\ol{k}\,\ol{l} + \ol{a} + \ol{a}\,\ol{l}} \wt{\gamma}^{-}_{ka'}(v) \wt{\gamma}^{+}_{la}(u) \\
  &= \langle k,l | \wt{\Gamma}^{-}_{1}(v)\wt{\Gamma}^{+}_{2}(u) \hat{\CR}^{[s-1]}_{12}(q^{-c}z) | i,j \rangle \,,
\end{align*}}
with the last equality due to~\eqref{eq:calR-hat-explicit},~\eqref{eq:Gamma-def}, and~\eqref{eq:parameter-shifts}
(we substitute $\xi = \kappa_{0}\xi^{[s-1]}$ and cancel~$\kappa_{0}$).


\subsection{Comparison of both sides and commutation with $l^{\pm}_{11}$}\label{ssec:proof-comparison}
\

\smallskip
Equating the $\kappa \to \kappa_{0}$ limits of the left-hand and right-hand sides of~\eqref{eq:fused-RLL-pm} obtained above yields
\begin{equation}\label{eq:Gamma-RLL-pm}
\begin{split}
  &\phi(q^{c}\kappa_{0} z)^{-1} \phi(q^{c}\kappa_{0}^{-1} z)^{-1} \phi(q^{c}z)^{-2} \cdot
  \frac{q^{(\varepsilon_{1},\varepsilon_{1})}q^{c}\kappa_{0}^{-1}z - q^{-(\varepsilon_{1},\varepsilon_{1})}}
       {q^{c}\kappa_{0}^{-1}z - 1} \\
  &\hspace{150pt} \times
   \langle k,l | \hat{\CR}^{[s-1]}_{12}(q^{c}z) \wt{\Gamma}^{+}_{1}(u)\wt{\Gamma}^{-}_{2}(v) | i,j \rangle \\
  &= \phi(q^{-c}\kappa_{0} z)^{-1} \phi(q^{-c}\kappa_{0}^{-1} z)^{-1} \phi(q^{-c}z)^{-2} \cdot
  \frac{q^{(\varepsilon_{1},\varepsilon_{1})}q^{-c}\kappa_{0}^{-1}z - q^{-(\varepsilon_{1},\varepsilon_{1})}}
       {q^{-c}\kappa_{0}^{-1}z - 1} \\
  &\hspace{150pt} \times
    \langle k,l | \wt{\Gamma}^{-}_{1}(v)\wt{\Gamma}^{+}_{2}(u) \hat{\CR}^{[s-1]}_{12}(q^{-c}z) | i,j \rangle \,.
\end{split}
\end{equation}
To factor out $l^{\pm}_{11}$-terms from $\wt{\Gamma}^{\pm}$ (see~\eqref{eq:gamma-ij-def}--\eqref{eq:Gamma-def}), consider the matrix identity
\begin{equation}\label{eq:aux-check-1}
\begin{split}
  &\check{R}_{12}(q^{c}z) \check{R}_{23}(q^{c}\kappa z) \check{R}_{12}(\kappa)
    L^{+}_{1}(\kappa u)L^{+}_{2}(u) L^{-}_{3}(v) \\
  &\quad = L^{-}_{1}(v) L^{+}_{2}(u)L^{+}_{3}(\kappa u)
    \check{R}_{23}(\kappa) \check{R}_{12}(q^{-c}\kappa z) \check{R}_{23}(q^{-c}z) \,,
\end{split}
\end{equation}
which follows from~\eqref{eq:braid-rel} and~\eqref{eq:braided-RLL-1}--\eqref{eq:braided-RLL-2}, see Figure~\ref{fig:aux-check-1}
for a graphical proof (akin to that of~\eqref{eq:fused-RLL-check}):

\begin{figure}[H]
\centering
\begin{tikzpicture}[baseline={(0,1.4)}, scale=0.82, every node/.style={font=\small}]
  \begin{scope}[shift={(0,0)}]
    \draw[thick, ->] (0,0) node[below] {$\kappa u$} -- (0,0.7) -- (0.9,1.45) -- (1.8,2.2) -- (1.8,2.9) node[above] {$\kappa u$};
    \draw[thick, preaction={-, draw=white, line width=5pt}, ->] (0.9,0) node[below] {$u$} -- (0.9,0.7) -- (0,1.45) -- (0,2.2) -- (0.9,2.9) node[above] {$u$};
    \draw[thick, preaction={-, draw=white, line width=5pt}, ->] (1.8,0) node[below] {$v$} -- (1.8,1.45) -- (0.9,2.2) -- (0,2.9) node[above] {$v$};

    \draw[thick, ->] (2.3,0.35) -- (-0.5,0.35);
    \filldraw[fill=white, thick] (0,0.35) circle (2.8pt);
    \filldraw[fill=white, thick] (0.9,0.35) circle (2.8pt);
    \filldraw[fill=black, thick] (1.8,0.35) circle (2.8pt);
  \end{scope}

  \node at (3.2,1.45) {$=$};

  \begin{scope}[shift={(4.4,0)}]
    \draw[thick, ->] (0,0) node[below] {$\kappa u$} -- (0,0.2) -- (0.9,0.95) -- (1.8,1.7) -- (1.8,2.9) node[above] {$\kappa u$};
    \draw[thick, preaction={-, draw=white, line width=5pt}, ->] (0.9,0) node[below] {$u$} -- (0.9,0.2) -- (0,0.95) -- (0,1.7) -- (0.9,2.45) -- (0.9,2.9) node[above] {$u$};
    \draw[thick, preaction={-, draw=white, line width=5pt}, ->] (1.8,0) node[below] {$v$} -- (1.8,0.95) -- (0.9,1.7) -- (0,2.45) -- (0,2.9) node[above] {$v$};

    \draw[thick, ->] (2.3,2.55) -- (-0.5,2.55);
    \filldraw[fill=black, thick] (0,2.55) circle (2.8pt);
    \filldraw[fill=white, thick] (0.9,2.55) circle (2.8pt);
    \filldraw[fill=white, thick] (1.8,2.55) circle (2.8pt);
  \end{scope}

  \node at (7.6,1.45) {$=$};

  \begin{scope}[shift={(8.8,0)}]
    \draw[thick, ->] (0,0) node[below] {$\kappa u$} -- (0,0.95) -- (0.9,1.7) -- (1.8,2.45) -- (1.8,2.9) node[above] {$\kappa u$};
    \draw[thick, preaction={-, draw=white, line width=5pt}, ->] (0.9,0) node[below] {$u$} -- (0.9,0.2) -- (1.8,0.95) -- (1.8,1.7) -- (0.9,2.45) -- (0.9,2.9) node[above] {$u$};
    \draw[thick, preaction={-, draw=white, line width=5pt}, ->] (1.8,0) node[below] {$v$} -- (1.8,0.2) -- (0.9,0.95) -- (0,1.7) -- (0,2.9) node[above] {$v$};

    \draw[thick, ->] (2.3,2.55) -- (-0.5,2.55);
    \filldraw[fill=black, thick] (0,2.55) circle (2.8pt);
    \filldraw[fill=white, thick] (0.9,2.55) circle (2.8pt);
    \filldraw[fill=white, thick] (1.8,2.55) circle (2.8pt);
  \end{scope}
\end{tikzpicture}
\caption{Graphical derivation of~\eqref{eq:aux-check-1}.}
\label{fig:aux-check-1}
\end{figure}
Rewriting $\check{R}$ in terms of $\hat{\CR}$ via~\eqref{eq:calR-hat-def} (and canceling the common $\phi(\kappa)^{-1}$),
and pairing both sides with $\langle 1,1,i|$ and $|j,1,1\rangle$, we obtain
\begin{equation}\label{eq:aux}
\begin{split}
  & \phi(q^{c}\kappa z)^{-1} \phi(q^{c}z)^{-1} \cdot
    \langle 1,1,i | \hat{\CR}_{12}(q^{c}z) \hat{\CR}_{23}(q^{c}\kappa z) \hat{\CR}_{12}(\kappa)
    L^{+}_{1}(\kappa u)L^{+}_{2}(u) L^{-}_{3}(v) | j,1,1 \rangle \\
  &= \phi(q^{-c}\kappa z)^{-1} \phi(q^{-c}z)^{-1} \cdot
    \langle 1,1,i | L^{-}_{1}(v) L^{+}_{2}(u)L^{+}_{3}(\kappa u)
    \hat{\CR}_{23}(\kappa) \hat{\CR}_{12}(q^{-c}\kappa z) \hat{\CR}_{23}(q^{-c}z) | j,1,1 \rangle \,.
\end{split}
\end{equation}
By~\eqref{eq:calR-hat-explicit}, the left-hand side of~\eqref{eq:aux} equals
\begin{equation*}
\begin{split}
  \phi(q^{c}\kappa z)^{-1} \phi(q^{c}z)^{-1} &\cdot
  (-1)^{\ol{1}} \cdot \frac{q^{(\varepsilon_{1},\varepsilon_{1})}q^{c}z - q^{-(\varepsilon_{1},\varepsilon_{1})}}{q^{c}z-1} \\
  &\times \bigg\{
    (-1)^{\ol{1}\,\ol{i}} \langle 1,i,1 | \hat{\CR}_{12}(\kappa) L^{+}_{1}(\kappa u)L^{+}_{2}(u) L^{-}_{3}(v) | j,1,1 \rangle \\
  &\hspace{20pt} + \frac{(q-q^{-1})q^{c}\kappa z}{q^{c}\kappa z - 1}
   \langle 1,1,i | \hat{\CR}_{12}(\kappa) L^{+}_{1}(\kappa u)L^{+}_{2}(u) L^{-}_{3}(v) | j,1,1 \rangle \bigg\} \,,
\end{split}
\end{equation*}
which in the limit $\kappa \to \kappa_{0}$ simplifies via~\eqref{eq:quasidet-1ij1} and~\eqref{eq:quasidet-11-lim} to
\begin{equation*}
  \phi(q^{c}\kappa_{0} z)^{-1} \phi(q^{c}z)^{-1} \cdot
  \frac{q^{(\varepsilon_{1},\varepsilon_{1})}q^{c}z - q^{-(\varepsilon_{1},\varepsilon_{1})}}{q^{c}z-1} \cdot
  \wt{\gamma}^{+}_{ij}(u) l^{-}_{11}(v) \,.
\end{equation*}
Similarly, the right-hand side of~\eqref{eq:aux} equals
\begin{equation*}
\begin{split}
  \phi(q^{-c}\kappa z)^{-1} \phi(q^{-c}z)^{-1} &\cdot
   (-1)^{\ol{1}} \cdot \frac{q^{(\varepsilon_{1},\varepsilon_{1})}q^{-c}z - q^{-(\varepsilon_{1},\varepsilon_{1})}}{q^{-c}z-1} \\
  &\times \bigg\{
    (-1)^{\ol{1}\,\ol{j}} \langle 1,1,i | L^{-}_{1}(v) L^{+}_{2}(u)L^{+}_{3}(\kappa u) \hat{\CR}_{23}(\kappa) | 1,j,1 \rangle \\
  &\hspace{20pt} + \frac{q-q^{-1}}{q^{-c}\kappa z - 1}
   \langle 1,1,i | L^{-}_{1}(v) L^{+}_{2}(u)L^{+}_{3}(\kappa u) \hat{\CR}_{23}(\kappa) | j,1,1 \rangle \bigg\} \,,
\end{split}
\end{equation*}
which in the limit $\kappa \to \kappa_{0}$ simplifies via~\eqref{eq:quasidet-i11j} and~\eqref{eq:quasidet-11-lim} to
\begin{equation*}
  \phi(q^{-c}\kappa_{0} z)^{-1} \phi(q^{-c}z)^{-1} \cdot
  \frac{q^{(\varepsilon_{1},\varepsilon_{1})}q^{-c}z - q^{-(\varepsilon_{1},\varepsilon_{1})}}{q^{-c}z-1} \cdot
  l^{-}_{11}(v) \wt{\gamma}^{+}_{ij}(u) \,.
\end{equation*}
Equating the two limits above and replacing $v$ by $\kappa_{0}v$ (so $z = u/v$ is replaced by $\kappa_{0}^{-1}z$) yields
\begin{equation}\label{eq:l11-gamma-comm-pm-1}
\begin{split}
  &\phi(q^{c}z)^{-1} \phi(q^{c}\kappa_{0}^{-1} z)^{-1} \cdot
  \frac{q^{(\varepsilon_{1},\varepsilon_{1})}q^{c}\kappa_{0}^{-1} z - q^{-(\varepsilon_{1},\varepsilon_{1})}}{q^{c}\kappa_{0}^{-1} z-1} \cdot
  \wt{\gamma}^{+}_{ij}(u) l^{-}_{11}(\kappa_{0} v) \\
  &= \phi(q^{-c}z)^{-1} \phi(q^{-c}\kappa_{0}^{-1} z)^{-1} \cdot
  \frac{q^{(\varepsilon_{1},\varepsilon_{1})}q^{-c}\kappa_{0}^{-1} z - q^{-(\varepsilon_{1},\varepsilon_{1})}}
       {q^{-c}\kappa_{0}^{-1} z-1} \cdot
  l^{-}_{11}(\kappa_{0} v) \wt{\gamma}^{+}_{ij}(u) \,.
\end{split}
\end{equation}

Analogously, consider the matrix identity
\begin{equation}\label{eq:aux-check-2}
\begin{split}
  &\check{R}_{23}(q^{c}\kappa^{-1} z) \check{R}_{12}(q^{c}z) \check{R}_{23}(\kappa)
    L^{+}_{1}(u) L^{-}_{2}(\kappa v)L^{-}_{3}(v) \\
  &\quad = L^{-}_{1}(v)L^{-}_{2}(\kappa v) L^{+}_{3}(u)
    \check{R}_{12}(\kappa) \check{R}_{23}(q^{-c}z) \check{R}_{12}(q^{-c}\kappa^{-1} z) \,,
\end{split}
\end{equation}
which follows from~\eqref{eq:braid-rel} and~\eqref{eq:braided-RLL-1}--\eqref{eq:braided-RLL-2}, see Figure~\ref{fig:aux-check-2}
for a graphical proof (cf.~\eqref{eq:fused-RLL-check} and~\eqref{eq:aux-check-1}):

\begin{figure}[H]
\centering
\begin{tikzpicture}[baseline={(0,1.4)}, scale=0.82, every node/.style={font=\small}]
  \begin{scope}[shift={(0,0)}]
    \draw[thick, ->] (0,0) node[below] {$u$} -- (0,1.45) -- (0.9,2.2) -- (1.8,2.9) node[above] {$u$};
    \draw[thick, preaction={-, draw=white, line width=5pt}, ->] (0.9,0) node[below] {$\kappa v$} -- (0.9,0.7) -- (1.8,1.45) -- (1.8,2.2) -- (0.9,2.9) node[above] {$\kappa v$};
    \draw[thick, preaction={-, draw=white, line width=5pt}, ->] (1.8,0) node[below] {$v$} -- (1.8,0.7) -- (0.9,1.45) -- (0,2.2) -- (0,2.9) node[above] {$v$};

    \draw[thick, ->] (2.3,0.35) -- (-0.5,0.35);
    \filldraw[fill=white, thick] (0,0.35) circle (2.8pt);
    \filldraw[fill=black, thick] (0.9,0.35) circle (2.8pt);
    \filldraw[fill=black, thick] (1.8,0.35) circle (2.8pt);
  \end{scope}

  \node at (3.2,1.45) {$=$};

  \begin{scope}[shift={(4.4,0)}]
    \draw[thick, ->] (0,0) node[below] {$u$} -- (0,0.95) -- (0.9,1.7) -- (1.8,2.45) -- (1.8,2.9) node[above] {$u$};
    \draw[thick, preaction={-, draw=white, line width=5pt}, ->] (0.9,0) node[below] {$\kappa v$} -- (0.9,0.2) -- (1.8,0.95) -- (1.8,1.7) -- (0.9,2.45) -- (0.9,2.9) node[above] {$\kappa v$};
    \draw[thick, preaction={-, draw=white, line width=5pt}, ->] (1.8,0) node[below] {$v$} -- (1.8,0.2) -- (0.9,0.95) -- (0,1.7) -- (0,2.9) node[above] {$v$};

    \draw[thick, ->] (2.3,2.55) -- (-0.5,2.55);
    \filldraw[fill=black, thick] (0,2.55) circle (2.8pt);
    \filldraw[fill=black, thick] (0.9,2.55) circle (2.8pt);
    \filldraw[fill=white, thick] (1.8,2.55) circle (2.8pt);
  \end{scope}

  \node at (7.6,1.45) {$=$};

  \begin{scope}[shift={(8.8,0)}]
    \draw[thick, ->] (0,0) node[below] {$u$} -- (0,0.2) -- (0.9,0.95) -- (1.8,1.7) -- (1.8,2.9) node[above] {$u$};
    \draw[thick, preaction={-, draw=white, line width=5pt}, ->] (0.9,0) node[below] {$\kappa v$} -- (0.9,0.2) -- (0,0.95) -- (0,1.7) -- (0.9,2.45) -- (0.9,2.9) node[above] {$\kappa v$};
    \draw[thick, preaction={-, draw=white, line width=5pt}, ->] (1.8,0) node[below] {$v$} -- (1.8,0.95) -- (0.9,1.7) -- (0,2.45) -- (0,2.9) node[above] {$v$};

    \draw[thick, ->] (2.3,2.55) -- (-0.5,2.55);
    \filldraw[fill=black, thick] (0,2.55) circle (2.8pt);
    \filldraw[fill=black, thick] (0.9,2.55) circle (2.8pt);
    \filldraw[fill=white, thick] (1.8,2.55) circle (2.8pt);
  \end{scope}
\end{tikzpicture}
\caption{Graphical derivation of~\eqref{eq:aux-check-2}.}
\label{fig:aux-check-2}
\end{figure}
Rewriting $\check{R}$ in terms of $\hat{\CR}$ via~\eqref{eq:calR-hat-def} (and canceling the common $\phi(\kappa)^{-1}$),
and pairing both sides with $\langle i,1,1|$ and $|1,1,j\rangle$, we obtain
\begin{equation}\label{eq:aux-2-app}
\begin{split}
  &\phi(q^{c}\kappa^{-1} z)^{-1} \phi(q^{c}z)^{-1} \cdot
  \langle i,1,1 | \hat{\CR}_{23}(q^{c}\kappa^{-1} z) \hat{\CR}_{12}(q^{c}z) \hat{\CR}_{23}(\kappa)
  L^{+}_{1}(u) L^{-}_{2}(\kappa v)L^{-}_{3}(v) | 1,1,j \rangle \\
  &= \phi(q^{-c}\kappa^{-1} z)^{-1} \phi(q^{-c}z)^{-1} \cdot
  \langle i,1,1 | L^{-}_{1}(v)L^{-}_{2}(\kappa v) L^{+}_{3}(u)
  \hat{\CR}_{12}(\kappa) \hat{\CR}_{23}(q^{-c}z) \hat{\CR}_{12}(q^{-c}\kappa^{-1} z) | 1,1,j \rangle \,.
\end{split}
\end{equation}
By~\eqref{eq:calR-hat-explicit}, the left-hand side of~\eqref{eq:aux-2-app} equals
\begin{equation*}
\begin{split}
  \phi(q^{c}\kappa^{-1} z)^{-1} \phi(q^{c}z)^{-1} &\cdot
  (-1)^{\ol{1}} \cdot \frac{q^{(\varepsilon_{1},\varepsilon_{1})}q^{c}\kappa^{-1}z - q^{-(\varepsilon_{1},\varepsilon_{1})}}
    {q^{c}\kappa^{-1}z-1} \\
  &\times \bigg\{
    (-1)^{\ol{1}\,\ol{i}} \langle 1,i,1 | \hat{\CR}_{23}(\kappa) L^{+}_{1}(u) L^{-}_{2}(\kappa v)L^{-}_{3}(v) | 1,1,j \rangle \\
  &\hspace{20pt} + \frac{q-q^{-1}}{q^{c}z - 1}
   \langle i,1,1 | \hat{\CR}_{23}(\kappa) L^{+}_{1}(u) L^{-}_{2}(\kappa v)L^{-}_{3}(v) | 1,1,j \rangle \bigg\} \,,
\end{split}
\end{equation*}
which in the limit $\kappa \to \kappa_{0}$ simplifies via~\eqref{eq:quasidet-i11j} and~\eqref{eq:quasidet-11-lim} to
\begin{equation*}
  \phi(q^{c}\kappa_{0}^{-1} z)^{-1} \phi(q^{c}z)^{-1} \cdot
  \frac{q^{(\varepsilon_{1},\varepsilon_{1})}q^{c}\kappa_{0}^{-1}z - q^{-(\varepsilon_{1},\varepsilon_{1})}}
       {q^{c}\kappa_{0}^{-1}z-1} \cdot l^{+}_{11}(u) \wt{\gamma}^{-}_{ij}(v) \,.
\end{equation*}
Similarly, the right-hand side of~\eqref{eq:aux-2-app} equals
\begin{equation*}
\begin{split}
  \phi(q^{-c}\kappa^{-1} z)^{-1} \phi(q^{-c}z)^{-1} &\cdot
   (-1)^{\ol{1}} \cdot
   \frac{q^{(\varepsilon_{1},\varepsilon_{1})}q^{-c}\kappa^{-1}z - q^{-(\varepsilon_{1},\varepsilon_{1})}}{q^{-c}\kappa^{-1}z-1} \\
  &\times \bigg\{
    (-1)^{\ol{1}\,\ol{j}} \langle i,1,1 | L^{-}_{1}(v)L^{-}_{2}(\kappa v) L^{+}_{3}(u) \hat{\CR}_{12}(\kappa) | 1,j,1 \rangle \\
  &\hspace{20pt} + \frac{(q-q^{-1})q^{-c}z}{q^{-c}z - 1}
   \langle i,1,1 | L^{-}_{1}(v)L^{-}_{2}(\kappa v) L^{+}_{3}(u) \hat{\CR}_{12}(\kappa) | 1,1,j \rangle \bigg\} \,,
\end{split}
\end{equation*}
which in the limit $\kappa \to \kappa_{0}$ simplifies via~\eqref{eq:quasidet-i11j} and~\eqref{eq:quasidet-11-lim} to
\begin{equation*}
  \phi(q^{-c}\kappa_{0}^{-1} z)^{-1} \phi(q^{-c}z)^{-1} \cdot
  \frac{q^{(\varepsilon_{1},\varepsilon_{1})}q^{-c}\kappa_{0}^{-1}z - q^{-(\varepsilon_{1},\varepsilon_{1})}}
       {q^{-c}\kappa_{0}^{-1}z-1} \cdot \wt{\gamma}^{-}_{ij}(v) l^{+}_{11}(u) \,.
\end{equation*}
Equating the two limits above and replacing $u$ by $\kappa_{0}u$ (so $z = u/v$ is replaced by $\kappa_{0}z$) yields
\begin{equation}\label{eq:l11-gamma-comm-pm-2}
\begin{split}
  &\phi(q^{c}z)^{-1} \phi(q^{c}\kappa_{0} z)^{-1} \cdot
  \frac{q^{(\varepsilon_{1},\varepsilon_{1})}q^{c}z - q^{-(\varepsilon_{1},\varepsilon_{1})}}{q^{c}z-1} \cdot
  l^{+}_{11}(\kappa_{0} u) \wt{\gamma}^{-}_{ij}(v) \\
  &= \phi(q^{-c}z)^{-1} \phi(q^{-c}\kappa_{0} z)^{-1} \cdot
  \frac{q^{(\varepsilon_{1},\varepsilon_{1})}q^{-c}z - q^{-(\varepsilon_{1},\varepsilon_{1})}}{q^{-c}z-1} \cdot
  \wt{\gamma}^{-}_{ij}(v) l^{+}_{11}(\kappa_{0} u) \,.
\end{split}
\end{equation}

Finally, comparing the $E_{11} \otimes E_{11}$ components of both sides of~\eqref{eq:rescaled-braided-RLL-2}, we obtain
\begin{equation}\label{eq:l11-l11-comm-pm}
\begin{split}
  &\phi(q^{c}z)^{-1} \cdot
  \frac{q^{(\varepsilon_{1},\varepsilon_{1})}q^{c}z - q^{-(\varepsilon_{1},\varepsilon_{1})}}{q^{c}z - 1}
  l^{+}_{11}(u) l^{-}_{11}(v) \\
  &= \phi(q^{-c}z)^{-1} \cdot
  \frac{q^{(\varepsilon_{1},\varepsilon_{1})}q^{-c}z - q^{-(\varepsilon_{1},\varepsilon_{1})}}{q^{-c}z - 1}
  l^{-}_{11}(v) l^{+}_{11}(u) \,.
\end{split}
\end{equation}

Since $l^{\pm}_{11}(u)$ are invertible, combining~\eqref{eq:l11-gamma-comm-pm-1}--\eqref{eq:l11-l11-comm-pm} with
$\wt{\gamma}^{\pm}_{ij}(u) = l^{\pm}_{11}(\kappa_{0}u)\wt{s}^{\pm}_{ij}(u)$ from~\eqref{eq:gamma-ij-def}, we deduce
the commutation relations between $\wt{s}^{\pm}_{ij}(u)$ and $l^{\mp}_{11}(v)$:
\begin{equation}\label{eq:s-l11-comm-pm-1}
  \begin{split}
    &\phi(q^{c}\kappa_{0}^{-1} z)^{-1} \cdot
    \frac{q^{(\varepsilon_{1},\varepsilon_{1})}q^{c}\kappa_{0}^{-1} z - q^{-(\varepsilon_{1},\varepsilon_{1})}}
         {q^{c}\kappa_{0}^{-1} z-1} \cdot
    \frac{q^{c}z - 1}{q^{(\varepsilon_{1},\varepsilon_{1})}q^{c}z - q^{-(\varepsilon_{1},\varepsilon_{1})}} \cdot
    \wt{s}^{+}_{ij}(u) l^{-}_{11}(\kappa_{0} v) \\
    &\quad = \phi(q^{-c}\kappa_{0}^{-1} z)^{-1} \cdot
    \frac{q^{(\varepsilon_{1},\varepsilon_{1})}q^{-c}\kappa_{0}^{-1} z - q^{-(\varepsilon_{1},\varepsilon_{1})}}
         {q^{-c}\kappa_{0}^{-1} z-1} \cdot
    \frac{q^{-c}z - 1}{q^{(\varepsilon_{1},\varepsilon_{1})}q^{-c}z - q^{-(\varepsilon_{1},\varepsilon_{1})}} \cdot
    l^{-}_{11}(\kappa_{0} v) \wt{s}^{+}_{ij}(u)
  \end{split}
\end{equation}
and
\begin{equation}\label{eq:s-l11-comm-pm-2}
  \phi(q^{c}\kappa_{0} z)^{-1} \, l^{+}_{11}(\kappa_{0} u) \wt{s}^{-}_{ij}(v)
  = \phi(q^{-c}\kappa_{0} z)^{-1} \, \wt{s}^{-}_{ij}(v) l^{+}_{11}(\kappa_{0} u) \,.
\end{equation}
Using~\eqref{eq:l11-l11-comm-pm}--\eqref{eq:s-l11-comm-pm-2} to pull $l^{+}_{11}(\kappa_{0}u)$ and $l^{-}_{11}(\kappa_{0}v)$ out
to the left on both sides of~\eqref{eq:Gamma-RLL-pm} and then multiplying by $(l^{+}_{11}(\kappa_{0}u) l^{-}_{11}(\kappa_{0}v))^{-1}$
on the left, we obtain
\begin{equation*}
  \phi(q^{c}z)^{-1} \langle k,l | \hat{\CR}^{[s-1]}_{12}(q^{c}z) \wt{S}^{+}_{1}(u)\wt{S}^{-}_{2}(v) | i,j \rangle
  = \phi(q^{-c}z)^{-1} \langle k,l | \wt{S}^{-}_{1}(v)\wt{S}^{+}_{2}(u) \hat{\CR}^{[s-1]}_{12}(q^{-c}z) | i,j \rangle \,,
\end{equation*}
which is equivalent to~\eqref{eq:S-RLL-2}.

\medskip
The same-sign relation~\eqref{eq:S-RLL-1} is obtained via a line-by-line analogue of the above argument
(dropping all $\phi$-factors and setting $q^{\pm c} \mapsto 1$). To this end, we replace the equality~\eqref{eq:fused-RLL-pm} with
\begin{equation*}
\begin{split}
  &\langle k,1,1,l |
  \hat{\CR}_{23}(\kappa^{-1} z) \hat{\CR}_{34}(z) \hat{\CR}_{12}(z) \hat{\CR}_{23}(\kappa z) \hat{\CR}_{12}(\kappa) \hat{\CR}_{34}(\kappa)
  L^{\pm}_{1}(\kappa u)L^{\pm}_{2}(u) L^{\pm}_{3}(\kappa v)L^{\pm}_{4}(v)
  | i,1,1,j \rangle \\
  &= \langle k,1,1,l |
  L^{\pm}_{1}(v)L^{\pm}_{2}(\kappa v) L^{\pm}_{3}(u)L^{\pm}_{4}(\kappa u)
  \hat{\CR}_{12}(\kappa) \hat{\CR}_{34}(\kappa) \hat{\CR}_{23}(\kappa z) \hat{\CR}_{34}(z) \hat{\CR}_{12}(z) \hat{\CR}_{23}(\kappa^{-1} z)
  | i,1,1,j \rangle \,.
\end{split}
\end{equation*}
Then~\eqref{eq:S-RLL-1} follows by taking the $\kappa \to \kappa_{0}$ limits of both sides, while utilizing
\begin{equation*}
\begin{split}
  &\langle 1,1,i | \hat{\CR}_{12}(z) \hat{\CR}_{23}(\kappa z) \hat{\CR}_{12}(\kappa)
  L^{\pm}_{1}(\kappa u)L^{\pm}_{2}(u) L^{\pm}_{3}(v) | j,1,1 \rangle \\
  &= \langle 1,1,i | L^{\pm}_{1}(v) L^{\pm}_{2}(u)L^{\pm}_{3}(\kappa u)
  \hat{\CR}_{23}(\kappa) \hat{\CR}_{12}(\kappa z) \hat{\CR}_{23}(z) | j,1,1 \rangle
\end{split}
\end{equation*}
instead of~\eqref{eq:aux}
together with $l^{\pm}_{11}(u) l^{\pm}_{11}(v) = l^{\pm}_{11}(v) l^{\pm}_{11}(u)$ obtained by comparing the $E_{11} \otimes E_{11}$
components in~\eqref{eq:rescaled-braided-RLL-1}, so that~\eqref{eq:s-l11-comm-pm-1}--\eqref{eq:s-l11-comm-pm-2} is replaced by
$\wt{s}^{\pm}_{ij}(u) l^{\pm}_{11}(v) = l^{\pm}_{11}(v) \wt{s}^{\pm}_{ij}(u) \ \forall\, 1 < i,j < 1'$.

\medskip
This completes the proof of Theorem~\ref{thm:rank-reduction}.


\subsection{Rational counterpart}\label{ssec:yangian}
\

Retaining the setup and notations of Section~\ref{ssec:notations-setups}, the \emph{rational orthosymplectic $R$-matrix}
$R_{Y}(x) \in \BC(x) \otimes_{\BC} \End(V)^{\otimes 2}$ is given by the following explicit formula (see~\cite[(3.4)]{ft},
while the standard parity case appeared first in~\cite{aacfr}):
\begin{equation*}
  R_{Y}(x) \coloneqq \ID - \frac{1}{x} \sum_{i,j=1}^{N} (-1)^{\ol{j}} E_{ij} \otimes E_{ji}
  + \frac{1}{x - \varpi} \sum_{i,j=1}^{N} (-1)^{\ol{i}\,\ol{j}} \vartheta_{i}\vartheta_{j} E_{ij} \otimes E_{i'j'} \,,
\end{equation*}
where we set $\varpi \coloneqq \frac{m - n - 2}{2}$ in accordance with~\cite[(3.3)]{ft}.
The operator $R_{Y}(x)$ satisfies the parameter-dependent (additive) \emph{quantum Yang--Baxter equation}:
\begin{equation*}
  R_{Y,12}(x_1 - x_2) R_{Y,13}(x_1 - x_3) R_{Y,23}(x_2 - x_3)
  = R_{Y,23}(x_2 - x_3) R_{Y,13}(x_1 - x_3) R_{Y,12}(x_1 - x_2) \,.
\end{equation*}

Consider the free $\BC$-superalgebra
$\CT_{Y} \coloneqq \BC\big\langle t_{ij}^{(r)} \,\big|\, 1 \leq i,j \leq N\,, r \geq 1 \big\rangle$ equipped with the
$\BZ_{2}$-grading $\big|t_{ij}^{(r)}\big| = \ol{i} + \ol{j} = |E_{ij}|$. We organize these generators into generating series
\begin{equation*}
  t_{ij}(u) \coloneqq \delta_{ij} + \sum_{r \geq 1} t_{ij}^{(r)} u^{-r} \in \CT_{Y}[\![u^{-1}]\!]
\end{equation*}
and the matrix
$$
  T(u) \coloneqq \sum_{i,j=1}^{N} t_{ij}(u) \otimes E_{ij} \in \CT_{Y}[\![u^{-1}]\!] \otimes_{\BC} \End(V) \,.
$$

\begin{Def}\label{def:Y-R}
The \emph{extended Yangian of $\fosp(V)$} is the $\BC$-superalgebra $Y(R_{Y}(x))$ defined as the quotient of $\CT_{Y}$
by all matrix coefficients in $\CT_{Y}$ of the following matrix relation:
\begin{equation}\label{eq:Y-rel}
  R_{Y,12}(u - v) T_{1}(u) T_{2}(v) = T_{2}(v) T_{1}(u) R_{Y,12}(u - v) \,.
\end{equation}
\end{Def}

For $1 < i,j < 1'$, we define the \emph{quasideterminant} $s_{ij}(u) = \delta_{ij} + \sum_{r \geq 1} s_{ij}^{(r)} u^{-r}$ via
(cf.~\eqref{eq:quasidet-untwisted})
\begin{equation}\label{eq:Y-quasidet}
  s_{ij}(u) \coloneqq t_{ij}(u) - (-1)^{(\ol{1}+\ol{i})(\ol{1}+\ol{j})}
    t_{i1}(u) (t_{11}(u))^{-1} t_{1j}(u) \quad\in\quad Y(R_{Y}(x))[\![u^{-1}]\!] \,,
\end{equation}
and organize these series into the $(N-2) \times (N-2)$ matrix $S(u) \coloneqq \sum_{1<i,j<1'} s_{ij}(u) \otimes E_{ij}$.

\begin{Rem}\label{rem:FT-sign-comparison}
In~\cite[\S3.1]{ft}, the generating matrix is defined with an additional sign factor in each $(i,j)$-entry, corresponding to
$T(u) = \sum_{i,j=1}^{N} (-1)^{\ol{j}(\ol{i}+\ol{j})} \sft_{ij}(u) \otimes E_{ij}$, which amounts to the rescaling
$t_{ij}(u) = (-1)^{\ol{j}(\ol{i}+\ol{j})} \sft_{ij}(u)$. Under this rescaling, the
quasideterminant~\eqref{eq:Y-quasidet} takes the form $s_{ij}(u) = (-1)^{\ol{j}(\ol{i}+\ol{j})} \sfs_{ij}(u)$, where
$\sfs_{ij}(u) \coloneqq \sft_{ij}(u) - \sft_{i1}(u) (\sft_{11}(u))^{-1} \sft_{1j}(u)$ is the sign-free quasideterminant
(cf.~\cite[(3.5)]{m} for the standard parity case) and $S(u) = \sum_{1<i,j<1'} (-1)^{\ol{j}(\ol{i}+\ol{j})} \sfs_{ij}(u) \otimes E_{ij}$.
\end{Rem}

Following Section~\ref{ssec:rank-reduction-hom}, we use the superscripts $[s]$ and $[s-1]$ to denote objects associated with
$V^{[s]} = V$ and $V^{[s-1]} = \Span_{\BC}\, \{v_{i} \,|\, 1 < i < 1'\}$, respectively. In particular,
$\varpi^{[s-1]} = \varpi^{[s]} - (\varepsilon_{1},\varepsilon_{1})$, cf.~\cite[(3.42)]{ft}. The following provides a new proof
of~\cite[Theorem~3.47]{ft} (cf.~\cite[Theorem~3.1]{jlm1} and~\cite[Theorem~3.1]{m} for the non-super and standard parity cases):

\begin{Thm}\label{thm:Y-rank-reduction}
Assume $s \geq 2$. In $Y(R_{Y}^{[s]}(x))$, the matrix $S(u)$ satisfies the defining relation of $Y(R_{Y}^{[s-1]}(x))$:
\begin{equation}\label{eq:Y-S-rel}
  R_{Y,12}^{[s-1]}(u - v) S_{1}(u) S_{2}(v) = S_{2}(v) S_{1}(u) R_{Y,12}^{[s-1]}(u - v) \,.
\end{equation}
Therefore, the assignment $T^{[s-1]}(u) \mapsto S(u)$ gives rise to a superalgebra homomorphism
\begin{equation}\label{eq:rank-reduction-Yangian}
  Y(R_{Y}^{[s-1]}(x)) \to Y(R_{Y}^{[s]}(x)) \,.
\end{equation}
\end{Thm}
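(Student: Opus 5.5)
The plan is to obtain Theorem~\ref{thm:Y-rank-reduction} as a trigonometric-to-rational degeneration of the same-sign part of the proof of Theorem~\ref{thm:rank-reduction}, i.e.\ of~\eqref{eq:S-RLL-1}. The guiding observation is that this proof never uses the structure of $U(\ol R(z))$ beyond three inputs. The first is the braided RLL relation~\eqref{eq:rescaled-braided-RLL-1}. The second is the braid relation~\eqref{eq:braid-rel}, through the half-twist identity~\eqref{eq:fused-RLL-check} with $q^{\pm c}\mapsto 1$ and the $3$-strand identity replacing~\eqref{eq:aux}. The third is a finite list of scalar identities among matrix entries of $\hat{\CR}(z)$: \eqref{eq:R-hat-kappa0-prop}, \eqref{eq:kappa0-fractions}, \eqref{eq:quasidet-11-lim}, the residue formulas in Step~3(ii), and the final matching with $\hat{\CR}^{[s-1]}$ via~\eqref{eq:parameter-shifts}. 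Every matrix coefficient appearing there is a linear combination, with scalar rational coefficients, of products of two generating series. Hence the whole argument is a formal consequence of those three inputs.

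First I will pass to the twisted Yangian $\wt Y(R_Y(x))$, defined as in Definition~\ref{def:U-tilde} using $\otimes^{\beta'}$. The argument of Proposition~\ref{prop:affine-twisted-rtt} gives $Y(R_Y(x))\iso \wt Y(R_Y(x))^{\zeta}$. As in Remark~\ref{rem:quasidet-twist}, the twisted quasideterminant $\wt s_{ij}(u)=t_{ij}(u)-t_{i1}(u)t_{11}(u)^{-1}t_{1j}(u)$ is sent to $s_{ij}(u)$ of~\eqref{eq:Y-quasidet}, using $|t_{11}|=\bbar0$. So it suffices to prove the twisted version of~\eqref{eq:Y-S-rel}.

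Next I will set $\check R_Y(x)\coloneqq\tau_{VV}\circ R_Y(x)$ and put $q=e^{\hbar}$, $z=e^{-2\hbar x}$; the sign of the exponent is to be fixed so that the pole $z=\xi=q^{-m+n+2}$ goes to $x=\varpi$. I will also choose a rescaling $\hat{\CR}_Y(x)$ of $\check R_Y(x)$, the analogue of $\phi$, for instance $\frac{x-1}{x}\check R_Y(x)$ up to sign conventions, with poles only at $x=0$ and $x=\varpi$. The key check is that each entry of $\hat{\CR}(e^{-2\hbar x})$ tends, as $\hbar\to0$, to the corresponding entry of $\hat{\CR}_Y(x)$. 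This uses $q^{(\rho,\varepsilon_i-\varepsilon_j)}\to1$, $q-q^{-1}\sim 2\hbar$ and $z-1\sim-2\hbar x$, and it yields an explicit rational analogue of~\eqref{eq:calR-hat-explicit}. Under this limit the multiplicative specialization $u=\kappa_0v$ becomes the additive shift $u=v+\lambda_0$ with $\lambda_0=(\varepsilon_1,\varepsilon_1)$ up to the same sign. The shift $\xi^{[s-1]}=\kappa_0^{-1}\xi^{[s]}$ becomes $\varpi^{[s-1]}=\varpi^{[s]}-(\varepsilon_1,\varepsilon_1)$, and the dichotomy $\kappa_0\neq\xi$ versus $\kappa_0=\xi$ becomes $\lambda_0\neq\varpi$ versus $\lambda_0=\varpi$.

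Then I will run Steps~1--5 of \S\ref{ssec:proof-LHS}--\ref{ssec:proof-RHS} with $q^{\pm c}\mapsto1$, now in $\wt Y$. The 4-strand identity holds in $\wt Y$ by the same graphical proof, since only~\eqref{eq:braid-rel} and the RLL relation for $\check R_Y$ are used. Each scalar identity invoked, such as the rational counterparts of~\eqref{eq:kappa0-fractions} and the vanishing of $\Sigma_{\mathrm L},\Sigma_{\mathrm R}$ in the resonant case $\lambda_0=\varpi$, is obtained as the $\hbar\to0$ limit of the trigonometric one. These are identities between rational functions whose limits exist, so they persist. The outcome is the rational analogue of~\eqref{eq:Gamma-RLL-pm} for $\wt\Gamma(u)=t_{11}(u+\lambda_0)\wt S(u)$. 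Finally the analogue of~\eqref{eq:aux} gives $[\wt s_{ij}(u),t_{11}(v)]=0$, and the $E_{11}\otimes E_{11}$ component gives $[t_{11}(u),t_{11}(v)]=0$. These let me cancel the invertible factor $t_{11}(u+\lambda_0)t_{11}(v+\lambda_0)$ and obtain~\eqref{eq:Y-S-rel}.

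The main obstacle I expect is bookkeeping in the degeneration of the residue computation of Step~3(ii). There one multiplies a pole of one factor by a zero of the other, and it must be checked that these orders behave uniformly in $\hbar$. In particular, $\lim_{\kappa\to\kappa_0}(\kappa-\kappa_0)^{-1}\langle1,1|\hat{\CR}(\kappa)$ must degenerate to the additive residue with the factor $(q-q^{-1})\kappa_0$ replaced by the right constant. I would handle this by writing out the rational $\hat{\CR}_Y$ explicitly and redoing only this residue identity directly, which is a two-line check. For every other scalar identity I would simply cite the limit.
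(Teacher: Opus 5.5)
Your proposal is correct and rests on the same idea as the paper: degenerate the same-sign part of the proof of Theorem~\ref{thm:rank-reduction} via $q=e^{\hbar}$, $z=e^{-2\hbar x}$. Under this, $\kappa_0$ corresponds to the shift $\lambda_0=(\varepsilon_1,\varepsilon_1)$, and the resonance $\kappa_0=\xi$ corresponds exactly to $\lambda_0=\varpi$. The difference is in how the degeneration is carried out.

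\emph{The paper's route.} It never reruns Steps~1--5 in the Yangian. It first notes that the same-sign argument works verbatim for the single-matrix algebra $\CY(\ol{R}(z))$, with all coefficients in the subring $\CO$. It then transports the argument along the ring map $\CO\to\BC[x,(x-a)^{-1}][\![\hbar]\!]$, which gives a homomorphism $\CA(\ol{R}^{[s-1]}_\hbar(x))\to\CA(\ol{R}^{[s]}_\hbar(x))$ of $\BC[\![\hbar]\!]$-superalgebras. Finally it base-changes along $\BC[\![\hbar]\!]\to\BC$, using $\lim_{\hbar\to0}\ol{R}_\hbar(x)=\frac{x}{x-1}R_Y(x)$ to identify $\CA(\ol{R}_\hbar(x))\otimes_{\BC[\![\hbar]\!]}\BC$ with $Y(R_Y(x))$.

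\emph{Your route.} You collapse both steps into termwise $\hbar\to0$ limits of scalar identities and redo the computation directly in $\wt{Y}$. Your ``limits persist'' step is exactly the statement that the composite $\CO\to\BC(x)$ is a ring homomorphism. The residue issue you flag in Step~3(ii) is precisely where factors such as $(q-q^{-1})^{-1}$ leave $\CO$. The clean fix is to cancel the zero of $\langle 1,1|\hat{\CR}(\kappa)$ against the pole of $\langle r',r|\hat{\CR}(\kappa)$ before specializing, or to check it directly as you propose.

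\emph{What each buys.} The paper's packaging spares any re-verification on the rational side. Yours gives a self-contained rational proof with explicit rational formulas.

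\emph{A small correction.} As $\hbar\to0$ we have $\phi(e^{-2\hbar x})\to\frac{x-1}{x}$ and $\ol{R}\to\frac{x}{x-1}R_Y$. So the limit of $\hat{\CR}$ is $\check{R}_Y(x)=\tau_{VV}\circ R_Y(x)$ itself, with simple poles only at $x\in\{0,\varpi\}$. Your suggested $\frac{x-1}{x}\check{R}_Y(x)$ has a double pole at $x=0$, so it is not the limit you require in your own ``key check''.
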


\medskip

\begin{Rem}\label{rem:generalized-rank-reduction}
To deduce Theorem~\ref{thm:Y-rank-reduction} from Theorem~\ref{thm:rank-reduction}, we note that the proof of the
same-sign relation~\eqref{eq:S-RLL-1} in Theorem~\ref{thm:rank-reduction} extends as~follows:
\begin{enumerate}[label=(\alph*), leftmargin=0.6cm]

\item
Let $\CY(\ol{R}(z))$ be the $\BC(q)$-superalgebra generated by the coefficients of a single matrix
$T(u) = \sum_{i,j=1}^{N} t_{ij}(u) \otimes E_{ij}$ with $t_{ij}(u) = \sum_{r \geq 0} t_{ij}^{(r)} u^{-r}$,
subject to $t_{ij}^{(0)} = \delta_{ij}$ and the defining relation
\begin{equation}\label{eq:CY-rel}
  \ol{R}_{12}(u/v) T_{1}(u) T_{2}(v) = T_{2}(v) T_{1}(u) \ol{R}_{12}(u/v) \,.
\end{equation}
The exact same proof of the same-sign relation~\eqref{eq:S-RLL-1} shows that the quasideterminant matrix
$S(u) = \sum_{1 < i,j < 1'} s_{ij}(u) \otimes E_{ij}$ with
$s_{ij}(u) \coloneqq t_{ij}(u) - (-1)^{(\ol{1}+\ol{i})(\ol{1}+\ol{j})} t_{i1}(u) t_{11}(u)^{-1} t_{1j}(u)$
satisfies~\eqref{eq:CY-rel} with $\ol{R}^{[s]}(z)$ replaced by $\ol{R}^{[s-1]}(z)$, yielding a rank-reduction homomorphism
\begin{equation}\label{eq:rank-recution-T}
  \CY(\ol{R}^{[s-1]}(z)) \to \CY(\ol{R}^{[s]}(z)) \,.
\end{equation}

\smallskip

\item
Consider the $\BC[q, q^{-1}]$-subalgebra $\CO \subset \BC(q)(z)$ given by
\begin{equation*}
  \CO \coloneqq \BC\left[q, q^{-1}, z, z^{-1}, \frac{q - q^{-1}}{z - q^{k}}, \frac{z - q^{l}}{z - q^{k}}
  \,\middle|\, k, l \in \BZ \right] \,,
\end{equation*}
which contains the matrix entries of $\ol{R}(z), \hat{\CR}(z)$ and all coefficients appearing in the
$\kappa \to \kappa_{0}$ limit process in Sections~\ref{ssec:quasideterminant}, \ref{ssec:proof-LHS}--\ref{ssec:proof-comparison}
(see~\eqref{eq:kappa0-fractions}), so that the derivation of~\eqref{eq:rank-recution-T} holds over~$\CO$.

The specialization $q \mapsto e^{\hbar/2}$, $z \mapsto e^{-\hbar x}$ defines a $\BC$-algebra homomorphism
\begin{equation}\label{eq:O-to-hbar-specialization}
  \CO \to \BC\left[x, (x - a)^{-1} \,\middle|\, a \in \sfrac{1}{2}\BZ \right][\![\hbar]\!] \subset \BC(x)[\![\hbar]\!] \,.
\end{equation}
Denoting the image of $\ol{R}(z)$ under~\eqref{eq:O-to-hbar-specialization} by
\begin{equation}\label{eq:R-hbar-def}
  \ol{R}_{\hbar}(x) \coloneqq \ol{R}(e^{-\hbar x})\big|_{q = e^{\hbar/2}} \,,
\end{equation}
let $\CA(\ol{R}_{\hbar}(x))$ be the $\BC[\![\hbar]\!]$-superalgebra generated by coefficients of $T(u)$
subject to $t_{ij}^{(0)} = \delta_{ij}$~and
\begin{equation}\label{eq:A-hbar-rel}
  \ol{R}_{\hbar,12}(u - v) T_{1}(u) T_{2}(v) = T_{2}(v) T_{1}(u) \ol{R}_{\hbar,12}(u - v) \,.
\end{equation}
By base change along~\eqref{eq:O-to-hbar-specialization}, part~(a) immediately yields the rank-reduction homomorphism
\begin{equation}\label{eq:rank-recution-A}
  \CA(\ol{R}_{\hbar}^{[s-1]}(x)) \to \CA(\ol{R}_{\hbar}^{[s]}(x)) \quad \mathrm{given\ by} \quad T^{[s-1]}(u) \mapsto S(u) \,.
\end{equation}

\end{enumerate}
\end{Rem}

\medskip

\begin{proof}[Proof of Theorem~\ref{thm:Y-rank-reduction}]
Under the specialization~\eqref{eq:O-to-hbar-specialization}--\eqref{eq:R-hbar-def}, whereas $\xi = q^{-m+n+2} = e^{-\hbar \varpi}$,
the $\hbar \to 0$ limit of $\ol{R}_{\hbar}(x)$ yields (cf.~\cite[Remark~6.5]{ht1}):
\begin{equation}\label{eq:Y-trig-to-rat-limit}
  \lim_{\hbar \to 0} \ol{R}_{\hbar}(x) = \frac{x}{x - 1} R_{Y}(x) \,.
\end{equation}
Consider the base change along the canonical quotient map $\BC[\![\hbar]\!] \to \BC[\![\hbar]\!]/(\hbar) \simeq \BC$.
By~\eqref{eq:Y-trig-to-rat-limit}, the defining relation~\eqref{eq:A-hbar-rel} of the $\BC[\![\hbar]\!]$-superalgebra
$\CA(\ol{R}_{\hbar}^{[s]}(x))$ specializes in the $\hbar \to 0$ limit
to~\eqref{eq:Y-rel} yielding a canonical $\BC$-superalgebra isomorphism
\begin{equation*}
  \CA(\ol{R}_{\hbar}^{[s]}(x)) \otimes_{\BC[\![\hbar]\!]} \BC \iso Y(R_{Y}^{[s]}(x)) \,.
\end{equation*}
Thus, the $\BC[\![\hbar]\!]$-superalgebra homomorphism $\CA(\ol{R}_{\hbar}^{[s-1]}(x)) \to \CA(\ol{R}_{\hbar}^{[s]}(x))$
from~\eqref{eq:rank-recution-A}, after the base change $- \otimes_{\BC[\![\hbar]\!]} \BC$ immediately yields the desired
homomorphism~\eqref{eq:rank-reduction-Yangian}.
\end{proof}


\medskip

\appendix
\section{Scalar prefactor in the universal $R$-matrix}\label{sec:prefactor}

In this Appendix, we compute the scalar prefactor $f(z)$ in~\eqref{eq:R-univ-eval} relating the evaluated universal $R$-matrix
$R(z)$ of $\UdqV$ to the unitary $R$-matrix $\ol{R}(z)$ of~\eqref{eq:R-affine-BCD}, thus refining~\cite[Theorem~6.3]{ht1}.


\subsection{Quantum affine orthosymplectic supergroups and evaluation modules}\label{ssec:eval-modules}
\

We briefly recall the Drinfeld--Jimbo realization of the quantum affine orthosymplectic supergroup $\UdqV$ and its evaluation
representation on $V_{q}[u,u^{-1}]$, cf.~\cite[\S6.1--6.2]{ht1}.
Recall the positive root system $\Phi^{+}$ of $\fosp(V)$ from Section~\ref{ssec:notations-setups}. The simple roots
$\alpha_{1}, \ldots, \alpha_{s} \in \Phi^{+}$ are given by $\alpha_{i} \coloneqq \varepsilon_{i} - \varepsilon_{i+1}$
for $1 \leq i < s$, together with
\begin{equation*}
  \alpha_{s} \coloneqq
  \begin{cases}
    \varepsilon_{s} & \text{if $N$ is odd ($B$ type)} \,, \\
    2\varepsilon_{s} & \text{if $N$ is even and } \ol{s} = \bbar{1} \text{ ($C$ type)} \,, \\
    \varepsilon_{s-1} + \varepsilon_{s} & \text{if $N$ is even and } \ol{s} = \bbar{0} \text{ ($D$ type)} \,.
  \end{cases}
\end{equation*}
Let $\theta \in \Phi^{+}$ be the highest root of $\fosp(V)$, that is (recalling that $\varepsilon_{2} = 0$ when $N = 3$)
\begin{equation*}
  \theta =
  \begin{cases}
    2\varepsilon_{1} & \text{if } \ol{1} = \bbar{1} \,, \\
    \varepsilon_{1} + \varepsilon_{2} & \text{if } \ol{1} = \bbar{0} \,.
  \end{cases}
\end{equation*}
Denoting by $\delta$ the null root of the (untwisted) affine Lie superalgebra $\widehat{\fosp}(V)$, the affine simple root
is given by $\alpha_{0} \coloneqq \delta - \theta$. The symmetric bilinear form $(\cdot,\cdot)$ on $\fh^{*}$ is extended to
$\fh^{*} \oplus \BC\delta$ via
$$
  (\delta, \fh^{*}) = 0 = (\delta, \delta) \,.
$$

We recall the following property of the Weyl vector $\rho$ from~\eqref{eq:weyl-vector} (see~\cite[Corollary~1.33]{cw}):
\begin{equation}\label{eq:rho-simple-roots}
  (2\rho, \alpha_{i}) = (\alpha_{i}, \alpha_{i}) \qquad \text{for all} \quad 1 \leq i \leq s \,,
\end{equation}
while the \emph{dual Coxeter number}
$$
  h^{\vee} \coloneqq \sfrac{1}{2}(\theta, \theta + 2\rho)
$$
is given independently of the parity sequence $\gamma_{V}$ by
\begin{equation*}
  2h^{\vee} = (\theta, \theta + 2\rho) = (\alpha_{0}, \alpha_{0}) - (2\rho, \alpha_{0}) = 2(m - n - 2) \,.
\end{equation*}
In particular, recalling $\xi = q^{-m+n+2}$ from~\eqref{eq:R-affine-BCD}, we have
\begin{equation}\label{eq:q-coxeter-xi}
  q^{-h^{\vee}} = \xi = q^{-m+n+2} \,.
\end{equation}

The \emph{quantum affine orthosymplectic supergroup} $\UdqV$ is the $\BC(q)$-superalgebra generated by
$e_{i}, f_{i}, q^{\pm h_{i}}$ ($0 \leq i \leq s$) and $q^{\pm d}$, with $\BZ_{2}$-grading $|e_{i}| = |f_{i}| = |\alpha_{i}|$
and $|q^{\pm h_{i}}| = |q^{\pm d}| = \bbar{0}$, subject to the Chevalley-type, standard Serre, and higher-order Serre relations,
see~\cite[\S6.1]{ht1}. In particular, the Chevalley-type relations are given by
\begin{gather*}
  q^{\pm h_{i}} q^{\mp h_{i}} = 1 \,,\quad q^{\pm d} q^{\mp d} = 1  \,,\quad
  q^{h_{i}} q^{h_{j}} = q^{h_{j}} q^{h_{i}} \,,\quad q^{d} q^{h_{i}} = q^{h_{i}} q^{d} \,, \\
  q^{h_{i}} e_{j} q^{-h_{i}} = q^{(\alpha_{i},\alpha_{j})} e_{j} \,, \qquad
  q^{h_{i}} f_{j} q^{-h_{i}} = q^{-(\alpha_{i},\alpha_{j})} f_{j} \,, \qquad
  [e_{i}, f_{j}] = \delta_{ij} \frac{q^{h_{i}} - q^{-h_{i}}}{q - q^{-1}} \,, \\
  q^{d} e_{i} q^{-d} = q^{\delta_{i 0}} e_{i} \,, \qquad
  q^{d} f_{i} q^{-d} = q^{-\delta_{i 0}} f_{i}
\end{gather*}
for all $0 \leq i, j \leq s$, where $[e_{i}, f_{j}] \coloneqq e_{i}f_{j} - (-1)^{|e_{i}||f_{j}|} f_{j}e_{i}$ denotes the supercommutator.
Its Hopf superalgebra structure is given by
\begin{equation}\label{eq:DJ-Hopf}
\begin{split}
  & \Delta(e_{i}) = q^{h_{i}} \otimes e_{i} + e_{i} \otimes 1 \,, \qquad
  \Delta(f_{i}) = 1 \otimes f_{i} + f_{i} \otimes q^{-h_{i}} \,, \qquad
  \Delta(q^{\pm h_{i}}) = q^{\pm h_{i}} \otimes q^{\pm h_{i}} \,, \\
  & S(e_{i}) = -q^{-h_{i}} e_{i} \,, \qquad
  S(f_{i}) = -f_{i} q^{h_{i}} \,, \qquad
  S(q^{\pm h_{i}}) = q^{\mp h_{i}} \,, \\
  & \epsilon(e_{i}) = 0 \,, \qquad \epsilon(f_{i}) = 0 \,, \qquad \epsilon(q^{\pm h_{i}}) = 1 \,, \\
  & \Delta(q^{\pm d}) = q^{\pm d} \otimes q^{\pm d} \,, \qquad
  S(q^{\pm d}) = q^{\mp d} \,, \qquad
  \epsilon(q^{\pm d}) = 1 \,.
\end{split}
\end{equation}
We also consider the positive and negative Borel subalgebras $U_{q}^{\geq}(\widehat{\fosp}(V))$ and $U_{q}^{\leq}(\widehat{\fosp}(V))$,
generated by $\{e_{i}, q^{\pm h_{i}}\}_{i=0}^{s} \cup \{q^{\pm d}\}$ and $\{f_{i}, q^{\pm h_{i}}\}_{i=0}^{s} \cup \{q^{\pm d}\}$,
respectively, as well as $U_{q}^{>}(\widehat{\fosp}(V))$ and $U_{q}^{<}(\widehat{\fosp}(V))$ generated by $\{e_{i}\}_{i=0}^{s}$ and
$\{f_{i}\}_{i=0}^{s}$, respectively.
Let $U_{q}(\wt{\fosp}(V))$ denote the subalgebra generated by $\{e_{i}, f_{i}, q^{\pm h_{i}}\}_{i=0}^{s}$ (without $q^{\pm d}$),
and let $U_{q}^{\geq}(\wt{\fosp}(V))$ and $U_{q}^{\leq}(\wt{\fosp}(V))$ denote its Borel subalgebras generated by
$\{e_{i}, q^{\pm h_{i}}\}_{i=0}^{s}$ and $\{f_{i}, q^{\pm h_{i}}\}_{i=0}^{s}$, respectively. Moreover, the subalgebra generated by
$\{e_{i}, f_{i}, q^{\pm h_{i}}\}_{i=1}^{s}$ is a Hopf subalgebra of $\UdqV$, which coincides with the
\emph{quantum orthosymplectic supergroup} $\uqV$, see~\cite[\S2.4]{ht1} and~\cite[\S4.3]{ht2}.

For an indeterminate $u$, the \emph{evaluation representation} $\varrho_{u}$ of $\UdqV$ on
$V_{q}[u,u^{-1}] \coloneqq \BC(q)[u,u^{-1}] \otimes_{\BC(q)} V_{q}$ is defined on $U_{q}(\wt{\fosp}(V))$ via
$\varrho_{u} \colon U_{q}(\wt{\fosp}(V)) \to \End_{\BC(q)}(V_{q})[u,u^{-1}]$
(cf.~\cite[Propositions~3.1,~6.1]{ht1}, with $X_{ij}$~of~\eqref{eq:X-ij-def}):
\begin{align}
  \varrho_{u}(q^{\pm h_{i}}) &=
    \sum_{a=1}^{N} q^{\pm(\alpha_{i}, \varepsilon_{a})} E_{aa} \qquad (0 \leq i \leq s) \,, \label{eq:eval-cartan} \\
  \varrho_{u}(e_{i}) &= X_{i,i+1} \,, \qquad
    \varrho_{u}(f_{i}) = (-1)^{\ol{i}} X_{i+1,i} \qquad (1 \leq i < s) \,, \label{eq:eval-finite-i} \\
  \varrho_{u}(e_{s}) &=
  \begin{cases}
    X_{s,s+1} & \text{(type $B$)} \,, \\
    E_{ss'} & \text{(type $C$)} \,, \\
    X_{s-1,s'} & \text{(type $D$)} \,,
  \end{cases}
  \qquad
  \varrho_{u}(f_{s}) =
  \begin{cases}
    (-1)^{\ol{s}} X_{s+1,s} & \text{(type $B$)} \,, \\
    -(q+q^{-1}) E_{s's} & \text{(type $C$)} \,, \\
    (-1)^{\ol{s-1}} X_{s',s-1} & \text{(type $D$)} \,,
  \end{cases} \label{eq:eval-finite-s} \\
  \varrho_{u}(e_{0}) &=
  \begin{cases}
    u E_{1'1} & \text{if } \ol{1} = \bbar{1} \,, \\
    u X_{1'2} & \text{if } \ol{1} = \bbar{0} \,,
  \end{cases}
  \qquad
  \varrho_{u}(f_{0}) =
  \begin{cases}
    -(q+q^{-1}) u^{-1} E_{11'} & \text{if } \ol{1} = \bbar{1} \,, \\
    u^{-1} X_{21'} & \text{if } \ol{1} = \bbar{0} \,,
  \end{cases} \label{eq:eval-affine-0}
\end{align}
while $q^{\pm d}$ acts $\BC(q)$-linearly on $V_{q}[u,u^{-1}]$ via $\varrho_{u}(q^{\pm d})(u^{k} w) = q^{\pm k} u^{k} w$
for all $k \in \BZ$ and $w \in V_{q}$.


\subsection{Prefactor for $R$-matrix}\label{ssec:determine-prefactor}
\

Let $\fR_{\aff} \in U_{q}^{\geq}(\widehat{\fosp}(V)) \hat{\otimes} U_{q}^{\leq}(\widehat{\fosp}(V))$ denote the universal
$R$-matrix of $\UdqV$ (where $\hat{\otimes}$ denotes the completed tensor product), and set $z \coloneqq u/v$.
Recall that $\fR_{\aff}$ factors as
\begin{equation}\label{eq:R-aff-factorization}
  \fR_{\aff} = \fR_{\aff,\rmu} \fR_{\aff,\rms} \qquad \text{with} \qquad
  \fR_{\aff,\rmu} \in U_{q}^{>}(\widehat{\fosp}(V)) \hat{\otimes} U_{q}^{<}(\widehat{\fosp}(V)) \,,
\end{equation}
where the semisimple part $\fR_{\aff,\rms} = \fR_{\rms} q^{-c \otimes d - d \otimes c}$ is the product of the semisimple part
$\fR_{\rms}$ of the universal $R$-matrix $\fR = \fR_{\rmu} \fR_{\rms}$ of $\uqV$ and the factor $q^{-c \otimes d - d \otimes c}$, with
$c \coloneqq h_{0} + h_{\theta}$ being the canonical central element corresponding to the null root $\delta = \alpha_{0} + \theta$
(where $h_{\theta} \coloneqq \sum_{i=1}^{s} k_{i} h_{i}$ for $\theta = \sum_{i=1}^{s} k_{i} \alpha_{i}$). Setting
$\wt{\fR}_{\aff} \coloneqq \fR_{\aff,\rmu} \fR_{\rms} \in U_{q}^{\geq}(\wt{\fosp}(V)) \hat{\otimes} U_{q}^{\leq}(\wt{\fosp}(V))$,
we thus have $\fR_{\aff} = \wt{\fR}_{\aff} q^{-c \otimes d - d \otimes c}$.
By~\cite[Theorem~6.3]{ht1}, evaluating $\fR_{\aff}$ under $\varrho_{u} \otimes \varrho_{v}$ yields
\begin{equation}\label{eq:R-univ-eval}
  R(z) \coloneqq (\varrho_{u} \otimes \varrho_{v})\fR_{\aff} = f(z) \ol{R}(z) \,,
\end{equation}
where $\ol{R}(z)$ is the unitary $R$-matrix~\eqref{eq:R-affine-BCD} and $f(z) \in \BC(q)[\![z]\!]^{\times}$ is an
invertible scalar power~series. We shall now determine this prefactor $f(z)$ explicitly.

Consider the supertranspose map $\st \colon \End_{\BC(q)}(V_{q}) \to \End_{\BC(q)}(V_{q})$ defined on the elementary matrices by
\begin{equation*}
  (E_{ij})^{\st} \coloneqq (-1)^{\ol{j}(\ol{i}+\ol{j})} E_{ji} \qquad \forall\, 1 \leq i,j \leq N \,.
\end{equation*}
Since the supertranspose map $\st$ and the antipode $S$ are both superalgebra anti-automorphisms
(see Section~\ref{ssec:notations-setups}), the composition
\begin{equation*}
  \pi_{u} \coloneqq \st \circ \varrho_{u} \circ S \colon U_{q}(\wt{\fosp}(V)) \to \End_{\BC(q)}(V_{q})[u,u^{-1}]
\end{equation*}
is also a superalgebra homomorphism, which extends to a representation of $\UdqV$ on $V_{q}[u,u^{-1}]$ by setting
$\pi_{u}(q^{\pm d}) \coloneqq \varrho_{u}(q^{\pm d})$.
Let $\st_{1} \coloneqq \st \otimes \Id$ denote the partial supertranspose on the first tensor factor of
$\End_{\BC(q)}(V_{q})^{\otimes 2}$.

\begin{Rem}\label{rem:cd-trivial}
Since $(\delta, \varepsilon_{a}) = 0$ for all $1 \leq a \leq N$, it follows from~\eqref{eq:eval-cartan} that
$\varrho_{u}(q^{\pm c}) = \ID$, and hence also $\pi_{u}(q^{\pm c}) = \ID$. In the $\hbar$-adic setup (with $q = e^{\hbar}$),
this corresponds to $c \mapsto 0$ under both $\varrho_{u}$ and $\pi_{u}$. Expanding
$q^{y} = e^{\hbar y} = \sum_{k=0}^{\infty} \frac{\hbar^{k}}{k!} y^{k}$ for $y = -(c \otimes d + d \otimes c)$, we obtain
\begin{equation}\label{eq:R-aff-series}
  \fR_{\aff} = \wt{\fR}_{\aff} + \sum_{k=1}^{\infty} \frac{\hbar^{k}}{k!} \wt{\fR}_{\aff} y^{k} \,.
\end{equation}
Every $k \geq 1$ summand in~\eqref{eq:R-aff-series} involves $c$ and thus vanishes upon applying any
representation morphism sending $c \mapsto 0$ (i.e.\ $q^{\pm c} \mapsto \ID$). For example,
\begin{equation}\label{eq:cd-trivial-2}
  (\varrho_{u} \otimes \varrho_{v})\fR_{\aff} = (\varrho_{u} \otimes \varrho_{v})\wt{\fR}_{\aff} \,, \qquad
  (\pi_{u} \otimes \varrho_{v})\fR_{\aff} = (\pi_{u} \otimes \varrho_{v})\wt{\fR}_{\aff} \,.
\end{equation}
More generally, any linear map (continuous with respect to the $\hbar$-adic topology) acts term-by-term on the infinite
sum~\eqref{eq:R-aff-series}. For instance, applying $S \otimes \Id$ to~\eqref{eq:R-aff-series} gives
\begin{equation*}
  (S \otimes \Id)\fR_{\aff}
  = (S \otimes \Id)\wt{\fR}_{\aff} + \sum_{k=1}^{\infty} \frac{\hbar^{k}}{k!} (S \otimes \Id)\big(\wt{\fR}_{\aff} y^{k}\big) \,.
\end{equation*}
Since $S(c) = -c$, every $k \geq 1$ summand on the right-hand side still involves $c$ and thus vanishes under
$\varrho_{u} \otimes \varrho_{v}$ and $\pi_{u} \otimes \varrho_{v}$, yielding
\begin{equation}\label{eq:cd-trivial-S}
\begin{split}
  (\varrho_{u} \otimes \varrho_{v})(S \otimes \Id)\fR_{\aff} &= (\varrho_{u} \otimes \varrho_{v})(S \otimes \Id)\wt{\fR}_{\aff} \,, \\
  (\pi_{u} \otimes \varrho_{v})(S \otimes \Id)\fR_{\aff} &= (\pi_{u} \otimes \varrho_{v})(S \otimes \Id)\wt{\fR}_{\aff} \,.
\end{split}
\end{equation}
\end{Rem}

Using the identity $(S \otimes \Id)\fR_{\aff} = \fR_{\aff}^{-1}$ (see~\cite[Proposition~A.34]{ht2}) together with the fact that
$\pi_{u} \otimes \varrho_{v}$ is a superalgebra homomorphism, we obtain
\begin{equation}\label{eq:crossing-1}
  (\pi_{u} \otimes \varrho_{v})(S \otimes \Id)\fR_{\aff} = (\pi_{u} \otimes \varrho_{v})(\fR_{\aff}^{-1})
  = \big((\pi_{u} \otimes \varrho_{v})\fR_{\aff}\big)^{-1} \,.
\end{equation}
On the other hand, unfolding the definition of $\pi_{u}$ and using~\eqref{eq:cd-trivial-2}--\eqref{eq:cd-trivial-S} directly yields
\begin{equation}\label{eq:crossing-2}
\begin{split}
  (\pi_{u} \otimes \varrho_{v})\fR_{\aff} &= (\pi_{u} \otimes \varrho_{v})\wt{\fR}_{\aff}
  = \big((\varrho_{u} \otimes \varrho_{v})(S \otimes \Id)\wt{\fR}_{\aff}\big)^{\st_{1}} \\
  &\qquad = \big((\varrho_{u} \otimes \varrho_{v})(S \otimes \Id)\fR_{\aff}\big)^{\st_{1}}
   = \big((\varrho_{u} \otimes \varrho_{v}) \fR_{\aff}^{-1} \big)^{\st_{1}} = \big(R(z)^{-1}\big)^{\st_{1}}
\end{split}
\end{equation}
as well as
\begin{equation}\label{eq:crossing-3}
  (\pi_{u} \otimes \varrho_{v})(S \otimes \Id)\fR_{\aff}
  = (\pi_{u} \otimes \varrho_{v})(S \otimes \Id)\wt{\fR}_{\aff}
  = \big((\varrho_{u} \otimes \varrho_{v})(S^{2} \otimes \Id)\wt{\fR}_{\aff}\big)^{\st_{1}} \,.
\end{equation}
Combining~\eqref{eq:crossing-1}--\eqref{eq:crossing-3}, we therefore get
\begin{equation}\label{eq:crossing-combined}
  \big((\varrho_{u} \otimes \varrho_{v})(S^{2} \otimes \Id)\wt{\fR}_{\aff}\big)^{\st_{1}}
  = \Big(\big(R(z)^{-1}\big)^{\st_{1}}\Big)^{-1} \,.
\end{equation}

Since the first tensor factor of $\wt{\fR}_{\aff}$ lies in $U_{q}^{\geq}(\wt{\fosp}(V))$, to evaluate the left-hand side
of~\eqref{eq:crossing-combined} it suffices to compute the square of the antipode on $\{e_{i}, q^{\pm h_{i}}\}_{i=0}^{s}$.
From~\eqref{eq:DJ-Hopf}, we have $S^{2}(q^{\pm h_{i}}) = q^{\pm h_{i}}$
and $S^{2}(e_{i}) = q^{-h_{i}}e_{i}q^{h_{i}} = q^{-(\alpha_{i},\alpha_{i})} e_{i}$ for all $0 \leq i \leq s$.
Combining this with~\eqref{eq:rho-simple-roots}--\eqref{eq:q-coxeter-xi} gives
\begin{equation*}
  S^{2}(e_{i}) =
  \begin{cases}
    q^{-(2\rho,\alpha_{i})} e_{i} & \mathrm{if} \ \ 1 \leq i \leq s \,, \\
    q^{-(2\rho,\alpha_{0})} \xi^{2} e_{0} & \mathrm{if} \ \ i = 0 \,.
  \end{cases}
\end{equation*}
Setting $q^{\pm 2\sfh_{\rho}} \coloneqq \sum_{a=1}^{N} q^{\pm(2\rho,\varepsilon_{a})} E_{aa} \in \End_{\BC(q)}(V_{q})$,
it follows from~\eqref{eq:eval-cartan}--\eqref{eq:eval-affine-0} that
\begin{equation*}
  \varrho_{u}\big(S^{2}(y)\big) = q^{-2\sfh_{\rho}} \varrho_{\xi^{2}u}(y) q^{2\sfh_{\rho}}
  \qquad \text{for all} \quad y \in U_{q}^{\geq}(\wt{\fosp}(V)) \,,
\end{equation*}
and hence (cf.~\eqref{eq:cd-trivial-2})
\begin{equation*}
  (\varrho_{u} \otimes \varrho_{v})(S^{2} \otimes \Id)\wt{\fR}_{\aff}
  = (q^{-2\sfh_{\rho}} \otimes \ID) R(\xi^{2}z) (q^{2\sfh_{\rho}} \otimes \ID) \,.
\end{equation*}
Substituting this into~\eqref{eq:crossing-combined} finally yields the following relation:
\begin{equation}\label{eq:crossing-master}
  \Big((q^{-2\sfh_{\rho}} \otimes \ID) R(\xi^{2}z) (q^{2\sfh_{\rho}} \otimes \ID)\Big)^{\st_{1}}
  = \Big(\big(R(z)^{-1}\big)^{\st_{1}}\Big)^{-1} \,.
\end{equation}

One can now derive the difference equation on the prefactor $f(z)$ from~\eqref{eq:R-univ-eval} by evaluating both sides
of~\eqref{eq:crossing-master} on the vector $v_{1'} \otimes v_{1} \in V_{q} \otimes V_{q}$:
\begin{itemize}[leftmargin=0.7cm]

\item [$\bullet$]
\emph{Action of the left-hand side of~\eqref{eq:crossing-master} on $v_{1'} \otimes v_{1}$.}

By direct computation using the explicit formula~\eqref{eq:R-affine-BCD}, we find
\begin{equation}\label{eq:crossing-LHS-eigenval}
  \Big((q^{-2\sfh_{\rho}} \otimes \ID) R(\xi^{2}z) (q^{2\sfh_{\rho}} \otimes \ID)\Big)^{\st_{1}} (v_{1'} \otimes v_{1})
  = f(\xi^{2}z) \frac{\xi^{2}z-1}{q\xi^{2}z - q^{-1}} \cdot
  \frac{q^{-(\varepsilon_{1},\varepsilon_{1})}\xi z - q^{(\varepsilon_{1},\varepsilon_{1})}}{\xi z - 1} \cdot
  v_{1'} \otimes v_{1} \,.
\end{equation}

\item [$\bullet$]
\emph{Action of the right-hand side of~\eqref{eq:crossing-master} on $v_{1'} \otimes v_{1}$.}

By unitarity~\eqref{eq:R-affine-unitary} and~\eqref{eq:R-univ-eval}, we have $R(z)^{-1} = f(z)^{-1} \ol{R}_{21}(z^{-1})$.
By direct computation using~\eqref{eq:R-affine-BCD}, one sees that $v_{1'} \otimes v_{1}$ is an eigenvector of the operator
$\big(R(z)^{-1}\big)^{\st_{1}}$:
\begin{equation*}
  \big(R(z)^{-1}\big)^{\st_{1}} (v_{1'} \otimes v_{1})
  = f(z)^{-1} \frac{z^{-1}-1}{qz^{-1} - q^{-1}} \cdot
  \frac{q^{-(\varepsilon_{1},\varepsilon_{1})}z^{-1} - q^{(\varepsilon_{1},\varepsilon_{1})}\xi}{z^{-1} - \xi}
  \cdot v_{1'} \otimes v_{1} \,.
\end{equation*}
Inverting this scalar eigenvalue immediately gives the action of $\big((R(z)^{-1})^{\st_{1}}\big)^{-1}$ on $v_{1'} \otimes v_{1}$:
\begin{equation}\label{eq:crossing-RHS-eigenval}
  \Big(\big(R(z)^{-1}\big)^{\st_{1}}\Big)^{-1} (v_{1'} \otimes v_{1})
  = f(z) \frac{q^{-1}z - q}{z-1} \cdot
  \frac{\xi z - 1}{q^{(\varepsilon_{1},\varepsilon_{1})}\xi z - q^{-(\varepsilon_{1},\varepsilon_{1})}} \cdot
  v_{1'} \otimes v_{1} \,.
\end{equation}

\end{itemize}
Equating the eigenvalues in~\eqref{eq:crossing-LHS-eigenval} and~\eqref{eq:crossing-RHS-eigenval},
we arrive at the difference equation
\begin{equation}\label{eq:f-diff-eq}
  \frac{f(z)}{f(\xi^{2}z)}
  = \frac{(1 - z)(1 - q^{-2}\xi z)(1 - q^{2}\xi z)(1 - \xi^{2}z)}{(1 - q^{-2}z)(1 - \xi z)^{2}(1 - q^{2}\xi^{2}z)} \,.
\end{equation}

Consider the $\BZ$-grading on $\UdqV$ induced by the conjugation action of $q^{d}$, under which $\deg(e_{i}) = \delta_{i 0}$,
$\deg(f_{i}) = -\delta_{i 0}$, and $\deg(q^{\pm h_{i}}) = \deg(q^{\pm d}) = 0$ ($0 \leq i \leq s$), together with the induced
$\BZ \times \BZ$-grading on $\UdqV \otimes \UdqV$. The degree-$0$ subalgebras of $U_{q}^{>}(\widehat{\fosp}(V))$ and
$U_{q}^{<}(\widehat{\fosp}(V))$ are the subalgebras $U_{q}^{>}(\fosp(V))$ and $U_{q}^{<}(\fosp(V))$ of $\uqV$ generated by
$\{e_{i}\}_{i=1}^{s}$ and $\{f_{i}\}_{i=1}^{s}$, respectively, whereas the direct sums of the homogeneous components of degrees
$k \geq 1$ and $-k \leq -1$ equal the two-sided ideals of $U_{q}^{>}(\widehat{\fosp}(V))$ and $U_{q}^{<}(\widehat{\fosp}(V))$
generated by $e_{0}$ and $f_{0}$, respectively. Since every term of $\fR_{\aff,\rmu}$ in~\eqref{eq:R-aff-factorization} has
$\BZ \times \BZ$-degree $(k, -k)$ for some $k \geq 0$, it decomposes as
$\fR_{\aff,\rmu} = \fR_{\rmu} + \sum_{k \geq 1} \fR_{\aff,\rmu}^{(k)}$, where
$\fR_{\rmu} \in U_{q}^{>}(\fosp(V)) \hat{\otimes} U_{q}^{<}(\fosp(V))$ has $\BZ \times \BZ$-degree $(0,0)$ and
$\fR_{\aff,\rmu}^{(k)}$ has $\BZ \times \BZ$-degree $(k, -k)$. By~\eqref{eq:eval-cartan}--\eqref{eq:eval-affine-0},
$(\varrho_{u} \otimes \varrho_{v})(\fR_{\aff,\rmu}^{(k)}) \in z^{k} \End_{\BC(q)}(V_{q})^{\otimes 2}$ vanishes at $z = u/v = 0$
for all $k \geq 1$, while the restriction of $\varrho_{u}$ to $\uqV \subset \UdqV$ is independent of $u$ and coincides with
the first fundamental representation $\varrho \colon \uqV \to \End_{\BC(q)}(V_{q})$ on $V_{q}$ from~\cite[Proposition~3.1]{ht1}.
Consequently, in view of~\eqref{eq:cd-trivial-2}, $R(0) = (\varrho \otimes \varrho)\fR$ is the evaluated universal $R$-matrix
of $\uqV$ on $V_{q} \otimes V_{q}$, which is given explicitly in~\cite[Remark~4.7]{ht1}~by
\begin{equation*}
\begin{split}
  (\varrho \otimes \varrho)\fR &= \sum_{i,j=1}^{N} q^{-(\varepsilon_{i},\varepsilon_{j})} E_{ii} \otimes E_{jj}
  - (q - q^{-1}) \sum_{i < j} (-1)^{\ol{j}} E_{ij} \otimes E_{ji} \\
  &\hspace{20pt}
  + (q - q^{-1}) \sum_{i < j} (-1)^{\ol{i}\,\ol{j}} \vartheta_{i}\vartheta_{j} q^{(\rho,\varepsilon_{i}-\varepsilon_{j})}
    q^{-(\varepsilon_{i},\varepsilon_{i})/2}q^{(\varepsilon_{j},\varepsilon_{j})/2} E_{ij} \otimes E_{i'j'} \,.
\end{split}
\end{equation*}
On the other hand, setting $z = 0$ in the right-hand side of~\eqref{eq:R-affine-BCD} yields the exact same expression
$(\varrho \otimes \varrho)\fR = R(0)$, while the left-hand side of~\eqref{eq:R-affine-BCD} evaluates at $z = 0$ to $q^{-1}\ol{R}(0)$.
Thus $R(0) = q^{-1}\ol{R}(0)$, which implies the initial condition
\begin{equation}\label{eq:initial}
  f(0) = q^{-1} \,.
\end{equation}
Solving~(\ref{eq:f-diff-eq},~\ref{eq:initial}) in $\BC(q)[\![z]\!]$ provides the following refinement of~\cite[Theorem~6.3]{ht1}:

\begin{Prop}\label{prop:R-univ-explicit}
The evaluation $R(z) = (\varrho_{u} \otimes \varrho_{v})\fR_{\aff}$ of the universal $R$-matrix $\fR_{\aff}$ of $\UdqV$
on $V_{q}[u,u^{-1}] \otimes V_{q}[v,v^{-1}]$ (with $z = u/v$) is given explicitly by
\begin{equation*}
  R(z) = f(z) \ol{R}(z) \,,
\end{equation*}
where $\ol{R}(z)$ is the unitary $R$-matrix from~\eqref{eq:R-affine-BCD} and
\begin{equation}\label{eq:f-explicit}
  f(z) \coloneqq q^{-1} \prod_{k=0}^{\infty}
    \frac{(1 - \xi^{2k}z)(1 - q^{-2}\xi^{2k+1}z)(1 - q^{2}\xi^{2k+1}z)(1 - \xi^{2k+2}z)}
         {(1 - q^{-2}\xi^{2k}z)(1 - \xi^{2k+1}z)^{2}(1 - q^{2}\xi^{2k+2}z)} \,.
\end{equation}
Explicitly, we have
{\allowdisplaybreaks
\begin{equation*}
\begin{split}
  R(z)
  &= f_{0}(z) \Bigg\{ \sum_{i,j = 1}^{N} q^{(\varepsilon_{i}, \varepsilon_{j})} E_{ii} \otimes E_{jj}
  + (q-q^{-1}) \sum_{i>j} (-1)^{\ol{j}} E_{ij} \otimes E_{ji} \\
  &\hspace{50pt}
  - (q-q^{-1}) \sum_{i>j} (-1)^{\ol{i}\,\ol{j}} \vartheta_{i}\vartheta_{j} q^{(\rho,\varepsilon_{i}-\varepsilon_{j})}
    q^{-(\varepsilon_{i},\varepsilon_{i})/2}q^{(\varepsilon_{j},\varepsilon_{j})/2} E_{ij} \otimes E_{i'j'}  \\
  &\hspace{50pt} + \frac{q-q^{-1}}{z-1} \sum_{i,j = 1}^{N} (-1)^{\ol{j}} E_{ij} \otimes E_{ji} \\
  &\hspace{50pt}
    - \frac{(q-q^{-1})\xi}{z-\xi} \sum_{i,j = 1}^{N} (-1)^{\ol{i}\,\ol{j}} \vartheta_{i}\vartheta_{j}
    q^{(\rho,\varepsilon_{i}-\varepsilon_{j})}q^{-(\varepsilon_{i},\varepsilon_{i})/2}q^{(\varepsilon_{j},\varepsilon_{j})/2}
    E_{ij} \otimes E_{i'j'} \Bigg\} \,,
\end{split}
\end{equation*}}
where $f_{0}(z) \in 1 + z\BC(q)[\![z]\!]$ is given by
\begin{equation*}
  f_{0}(z) \coloneqq \frac{z-1}{zq-q^{-1}} f(z) = \prod_{k=0}^{\infty}
  \frac{(1 - \xi^{2k}z)^{2}(1 - q^{-2}\xi^{2k+1}z)(1 - q^{2}\xi^{2k+1}z)}
       {(1 - q^{-2}\xi^{2k}z)(1 - q^{2}\xi^{2k}z)(1 - \xi^{2k+1}z)^{2}} \,.
\end{equation*}
\end{Prop}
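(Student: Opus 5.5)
The plan is to solve the multiplicative difference equation~\eqref{eq:f-diff-eq} with the initial condition~\eqref{eq:initial}, and then read off the explicit formula for $R(z)$ from~\eqref{eq:R-univ-eval} and~\eqref{eq:R-affine-BCD}. Everything substantive has already been established: the crossing relation~\eqref{eq:crossing-master}, the eigenvalue computations~\eqref{eq:crossing-LHS-eigenval}--\eqref{eq:crossing-RHS-eigenval}, and $f(0)=q^{-1}$. What remains is essentially formal: uniqueness of the solution in $\BC(q)[\![z]\!]$, and verification that the infinite product~\eqref{eq:f-explicit} is such a solution.

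\emph{Uniqueness.} Write $g(z)\coloneqq$ the right-hand side of~\eqref{eq:f-diff-eq}. It is a rational function in $1+z\BC(q)[\![z]\!]$. Suppose $f_1,f_2\in\BC(q)[\![z]\!]^\times$ both satisfy $f(z)=g(z)f(\xi^2z)$ and $f(0)=q^{-1}$. Then $h\coloneqq f_1/f_2$ satisfies $h(z)=h(\xi^2z)$ and $h(0)=1$. Comparing coefficients of $z^k$ gives $h_k(1-\xi^{2k})=0$. Since $\xi=q^{-m+n+2}$, we have $\xi^{2k}\neq1$ for $k\ge1$ unless $m-n=2$.

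\emph{The degenerate case $\xi=1$.} Here~\eqref{eq:f-diff-eq} does not determine $f$ at all. However, in that case $g\equiv1$ identically: the numerator becomes $(1-z)(1-q^{-2}z)(1-q^2z)(1-z)$ and the denominator becomes $(1-q^{-2}z)(1-z)^2(1-q^2z)$. So this case needs a separate treatment, and I expect it to be the main obstacle. I would handle it by a generic-parameter argument. Namely, I would treat the evaluation of $\fR_{\aff}$ as depending on a formal shift, or argue degree-by-degree in $z$ that the coefficients of $f$ are given by universal formulas rational in $\xi$ and specialize continuously. Failing that, I would derive an alternative functional equation, using $(S^{-1}\otimes\Id)$ or the second tensor factor, whose multiplier is nontrivial at $\xi=1$. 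The cheapest route is probably to note that the product formula~\eqref{eq:f-explicit} at $\xi=1$ degenerates to a well-defined limit only after regrouping. So I would first check whether the paper intends $\xi\neq1$ implicitly; otherwise I would apply the $(S^2)$-twist argument with $S^{-2}$ and combine the two resulting equations.

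\emph{Existence.} Iterating $f(z)=g(z)f(\xi^2z)$ gives $f(z)=f(0)\prod_{k\ge0}g(\xi^{2k}z)$. This product converges in the $z$-adic (or $q$-adic) topology of $\BC(q)[\![z]\!]$, because each factor lies in $1+z\BC(q)[\![z]\!]$ and $g(\xi^{2k}z)-1$ has coefficients with $q$-valuation growing in $k$ when $\xi$ is a nontrivial power of $q$. Substituting $\xi^{2k}z$ into the four numerator and three denominator factors of $g$ reproduces the factors of~\eqref{eq:f-explicit} verbatim, giving $f(z)$ as stated.

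\emph{Explicit form of $R(z)$.} Define $f_0(z)=\frac{z-1}{zq-q^{-1}}f(z)$ and multiply~\eqref{eq:R-affine-BCD} by $f(z)$. To get the product formula for $f_0$, write $\frac{z-1}{zq-q^{-1}}=q^{-1}(1-z)/(1-q^{-2}z)\cdot(-1)/(-1)$, which equals $q\,(1-z)/(1-q^{2}z)$ up to the sign check. The $q^{-1}$ prefactor cancels, and the shifted factors telescope against the product: the factors $(1-\xi^{2k+2}z)$ and $(1-q^2\xi^{2k+2}z)$ re-index to $k\ge1$ and combine with the new $k=0$ terms. This yields the stated $f_0(z)\in1+z\BC(q)[\![z]\!]$.
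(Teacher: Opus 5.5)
For $\xi\neq1$ (i.e.\ $m\neq n+2$), your argument is the paper's. The paper derives \eqref{eq:f-diff-eq} and \eqref{eq:initial} from crossing symmetry and then simply iterates, and your uniqueness step ($h_k(1-\xi^{2k})=0$) is what makes ``solving'' legitimate.

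The genuine gap is $\xi=1$. The hypotheses allow it ($m=n+2$ with $n$ even, e.g.\ $\fosp(4|2)$, where $h^{\vee}=0$), and the paper's argument also passes over it in silence. You correctly observe that \eqref{eq:f-diff-eq} is then vacuous, but none of your remedies can work:
\begin{itemize}
\item $\xi=q^{-h^{\vee}}$ is fixed by $(m,n)$, so there is no family of algebras in which to deform it.
\item Every crossing-type identity (with $S^{\pm2}$ on either tensor leg) shifts the spectral parameter by $\xi^{\pm2}$. At $\xi=1$ the scalar $f$ cancels from all of them, so combining them gives nothing.
\end{itemize}
Worse, the statement is itself ambiguous at $\xi=1$. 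Read literally, every factor of \eqref{eq:f-explicit} and of the product for $f_0$ equals $1$, giving $f=q^{-1}$ and $f_0=1$. This contradicts $f_0=\frac{z-1}{zq-q^{-1}}f=\frac{1-z}{1-q^{2}z}$; your telescoping needs the products $\prod_k(1-\xi^{2k}z)$ to converge, i.e.\ $\xi\neq1$. On the other hand, taking logarithms, for $\xi\neq1$ the solution is
\begin{equation*}
  \log\big(qf(z)\big)=\sum_{j\geq1}\frac{z^{j}}{j}\cdot\frac{\xi^{j}-1+q^{-2j}-q^{2j}\xi^{j}}{1+\xi^{j}} \,.
\end{equation*}
Its coefficients are regular at $\xi=1$ and specialize there to $f(z)=q^{-1}\big(\frac{1-q^{2}z}{1-q^{-2}z}\big)^{1/2}$, not to $q^{-1}$. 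So your ``continuity in $\xi$'' idea, even if it could be justified, would contradict rather than confirm the literal formula. Settling $\xi=1$ needs input beyond crossing symmetry. One option is a direct computation of the scalar by which $R(z)$ acts on $v_{1}\otimes v_{1}$; by weight considerations, only the imaginary-root factor of a Khoroshkin--Tolstoy-type factorization of $\fR_{\aff}$ can contribute there. Otherwise the proposition should be restricted to $m\neq n+2$.

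Two smaller repairs are needed for $\xi\neq1$.
\begin{itemize}
\item The product does not converge $z$-adically merely because each factor lies in $1+z\BC(q)[\![z]\!]$: its $z$-coefficient is the infinite sum $g_{1}\sum_{k\geq0}\xi^{2k}$, with $g_1$ the $z$-coefficient of $g$. It converges coefficientwise in $\BC(\!(q)\!)[\![z]\!]$ or $\BC(\!(q^{-1})\!)[\![z]\!]$ according to the sign of $n-m+2$. The recursion $(1-\xi^{2k})f_{k}=\sum_{j=1}^{k}g_{j}\xi^{2(k-j)}f_{k-j}$ (or the displayed logarithm) then shows the coefficients lie in $\BC(q)$.
\item $\frac{z-1}{zq-q^{-1}}=q\,\frac{1-z}{1-q^{2}z}$, whereas your intermediate expression $q^{-1}\frac{1-z}{1-q^{-2}z}$ is a different function. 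With the correct expression, your re-indexing for $f_{0}$ goes through.
\end{itemize}
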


\smallskip

\begin{Rem}\label{rem:jlm-comparison}
We note that Proposition~\ref{prop:R-univ-explicit} is compatible with the corresponding results of~\cite{jlm2, jlm3}
for the non-super $BCD$-types (where our spectral parameter $z$ is denoted by $u$, and we denote their parameter $\xi$
and scalar series $f(u),g(u)$ by $\xi_{\mathrm{JLM}}$ and $f_{\mathrm{JLM}}(u),g_{\mathrm{JLM}}(u)$, respectively):
\begin{itemize}[leftmargin=0.6cm]

\smallskip
\item
For types $B$ and $D$ ($n = 0$, $N = m$), the parameter $\xi_{\mathrm{JLM}} = q^{2-N}$ in~\cite{jlm3} coincides with our
$\xi = q^{-m+2}$, and the series $f_{\mathrm{JLM}}(z)$ in~\cite[(1.6)]{jlm3} is given by
\begin{equation}\label{eq:f-JLM}
  f_{\mathrm{JLM}}(z) = \prod_{k=0}^{\infty}
    \frac{(1 - \xi_{\mathrm{JLM}}^{2k}z)(1 - q^{-2}\xi_{\mathrm{JLM}}^{2k+1}z)(1 - q^{2}\xi_{\mathrm{JLM}}^{2k+1}z)
          (1 - \xi_{\mathrm{JLM}}^{2k+2}z)}
         {(1 - \xi_{\mathrm{JLM}}^{2k-1}z)(1 - \xi_{\mathrm{JLM}}^{2k+1}z)(1 - q^{2}\xi_{\mathrm{JLM}}^{2k}z)
          (1 - q^{-2}\xi_{\mathrm{JLM}}^{2k}z)} \,.
\end{equation}
In~\cite{jlm3}, the ratio between the evaluated universal $R$-matrix and the unitary $R$-matrix is given~by
(cf.~\cite[(1.7), (2.1), (3.1)]{jlm3})
\begin{equation}\label{eq:jlm-BD-comparison}
  g_{\mathrm{JLM}}(z) = (z - q^{-2})(z - \xi_{\mathrm{JLM}}) f_{\mathrm{JLM}}(z) = q^{-1}\xi f(z) \,,
\end{equation}
with $f(z)$ of~\eqref{eq:f-explicit}.

\smallskip
\item
For type $C$ ($m = 0$, $N = n$, with $n$ rather denoted by $2n$ in~\cite{jlm2}), our bilinear form~\eqref{eq:epsilon-form} on
$\fh^{*}$ satisfies $(\varepsilon_{i}, \varepsilon_{j}) = -\delta_{ij}$ ($1 \leq i, j \leq s$) since $\ol{i} = \bbar{1}$, which is
the negative of the bilinear form $(\epsilon_{i}, \epsilon_{j}) = \delta_{ij}$ in~\cite[\S1]{jlm2}. Accordingly, the quantum affine
algebra of~\cite[\S2.1]{jlm2} is identified with $\UdqV$ via $q_{\mathrm{JLM}} \mapsto q^{-1}$, under which their parameter
$\xi_{\mathrm{JLM}} = q_{\mathrm{JLM}}^{-n-2}$ coincides with our $\xi = q^{n+2}$. The series $f_{\mathrm{JLM}}(z)$ of~\cite[(1.6)]{jlm2}
is given by the same formula~\eqref{eq:f-JLM} with $q$ replaced by $q_{\mathrm{JLM}}$, and thus the ratio between the evaluated
universal $R$-matrix and the unitary $R$-matrix is given by (cf.~\cite[(1.7), (2.1), (3.1)]{jlm2})
\begin{equation}\label{eq:jlm-C-comparison}
  g_{\mathrm{JLM}}(z) = (z - q_{\mathrm{JLM}}^{-2})(z - \xi_{\mathrm{JLM}}) f_{\mathrm{JLM}}(z)
  = q\xi \frac{zq^{-1} - q}{zq - q^{-1}} f(z) \,,
\end{equation}
with $f(z)$ of~\eqref{eq:f-explicit}.
Here, the unitary $R$-matrix of~\cite[(3.1)]{jlm2} coincides with $\eta(z)^{-1}\ol{R}_{21}(z)$, where
$\eta(z) \coloneqq \frac{zq^{-1} - q}{zq - q^{-1}}$. Since $\eta(z)$ satisfies $\eta(z)\eta(z^{-1}) = 1$, the factor $\eta(z)$
appearing in the right-hand side of~\eqref{eq:jlm-C-comparison} simply reflects a different choice of the unitary normalization.

\end{itemize}
\end{Rem}

\begin{Rem}\label{rem:wlz-comparison}
We also note that Proposition~\ref{prop:R-univ-explicit} is compatible with the corresponding result of~\cite{wlz},
where the super $B$ type case (corresponding to $m$ being odd) is considered for the standard parity sequence
$\gamma_{V} = (\bbar{1}, \ldots, \bbar{1}, \bbar{0}, \ldots, \bbar{0})$. We note that our $m, n$ are rather denoted by
$2m+1, 2n$ in~\cite{wlz}; we shall also denote their spectral parameter $z$ and scalar series $f(z), g(z)$ by
$z_{\mathrm{WLZ}}$ and $\wt{f}(z_{\mathrm{WLZ}}), \wt{g}(z_{\mathrm{WLZ}})$, respectively.
While their parameter $q$ and bilinear form on $\fh^{*}$ coincide with ours, their parameters $z_{\mathrm{WLZ}}$ and
$\zeta$ are the inverses of our $z$ and $\xi$ (i.e.\ $z_{\mathrm{WLZ}} = z^{-1}$ and $\zeta = \xi^{-1}$),
under which their unitary $R$-matrix $\wt{R}(z_{\mathrm{WLZ}})$ in~\cite[(4.3)]{wlz} coincides with our
$\ol{R}(z) = \ol{R}(z_{\mathrm{WLZ}}^{-1})$, and the series $\wt{f}(z_{\mathrm{WLZ}})$ in~\cite[(4.4)]{wlz} is given by
(cf.~\eqref{eq:f-JLM})
\begin{equation*}
  \wt{f}(z_{\mathrm{WLZ}}) = \prod_{k=0}^{\infty}
    \frac{(1 - \zeta^{2k}z_{\mathrm{WLZ}})(1 - q^{-2}\zeta^{2k+1}z_{\mathrm{WLZ}})
          (1 - q^{2}\zeta^{2k+1}z_{\mathrm{WLZ}})(1 - \zeta^{2k+2}z_{\mathrm{WLZ}})}
         {(1 - \zeta^{2k-1}z_{\mathrm{WLZ}})(1 - \zeta^{2k+1}z_{\mathrm{WLZ}})
          (1 - q^{-2}\zeta^{2k}z_{\mathrm{WLZ}})(1 - q^{2}\zeta^{2k}z_{\mathrm{WLZ}})} \,.
\end{equation*}
In~\cite{wlz}, the ratio between the evaluated universal $R$-matrix and the unitary $R$-matrix is given~by
(cf.~\cite[(4.3)--(4.4)]{wlz})
\begin{equation*}
  \wt{g}(z_{\mathrm{WLZ}}) = (z_{\mathrm{WLZ}} - q^{2})(z_{\mathrm{WLZ}} - \zeta) \wt{f}(z_{\mathrm{WLZ}})
  = q^{3}\xi^{-1} f(z) \,,
\end{equation*}
with $f(z)$ of~\eqref{eq:f-explicit}.
\end{Rem}


\end{document}